\newtheorem{theorem}{Theorem}[section]
\newtheorem{lemma}[theorem]{Lemma}
\newtheorem{corollary}[theorem]{Corollary}
\newtheorem{proposition}[theorem]{Proposition}
\newtheorem{def-thm}[theorem]{Definition-Theorem}
\theoremstyle{definition}
\newtheorem{definition}[theorem]{Definition}
\newtheorem{example}[theorem]{Example}
\newtheorem{remark}[theorem]{Remark}
\DeclareMathOperator{\Supp}{Supp}
\DeclareMathOperator{\Exc}{Exc}
\DeclareMathOperator{\lct}{lct}
\DeclareMathOperator{\Sing}{Sing}
\DeclareMathOperator{\NE}{\overline{NE}} 
\DeclareMathOperator{\Nlc}{Nlc} 
\DeclareMathOperator{\rank}{rank}
\DeclareMathOperator{\loc}{loc} 
\newcommand{\cO}{\ensuremath{\mathcal{O}}}
\newcommand{\cF}{\ensuremath{\mathcal{F}}}
\newcommand{\cG}{\ensuremath{\mathcal{G}}}
\newcommand{\Q}{\ensuremath{\mathbb{Q}}}
\newcommand{\R}{\ensuremath{\mathbb{R}}}
\newcommand{\C}{\ensuremath{\mathbb{C}}}
\newcommand{\Z}{\ensuremath{\mathbb{Z}}}
\newcommand{\bN}{\ensuremath{\mathbf{N}}}
\newcommand{\bM}{\ensuremath{\mathbf{M}}}
\newcommand{\bir}{\ensuremath{\dashrightarrow}}
\newcommand{\bL}{\ensuremath{\mathbf{L}}}
\title{MMP for generalized foliated threefolds of rank one}
\author{Mengchu Li}
\address{School of Mathematical Sciences, Fudan University, Shanghai 200433, China}
\email{mengchuli21@m.fudan.edu.cn}
\subjclass[2020]{14E30, 37F75}
\keywords{Foliation, Generalized pairs, Minimal Model Program.}
\date{\today}
\begin{document}

\begin{abstract}
    We establish the minimal model program (MMP) for generalized foliated threefolds $(X, \mathcal{F}, B, \mathbf{M})$ of rank 1, extending the result of Cascini and Spicer in \cite{CS20}. As an application of the generalized foliated MMP, we prove a base-point-free theorem for foliated triples on threefolds. We also prove the ACC for log canonical thresholds for generalized foliated threefolds of rank 1.
\end{abstract}

\maketitle

\setcounter{tocdepth}{1}
\tableofcontents

\section{Introduction}

The minimal model program (MMP) plays a central role in birational geometry. It predicts that any varieties with mild singularities is either uniruled or birational to a minimal model -- namely, a birational model with nef canonical divisor. In recent years, many results shows that an analogue picture holds for the birational geometry of foliations. One may replace the canonical divisor $K_X$ for a variety $X$ with the canonical divisor $K_\cF$ for a foliation $\cF$ on $X$, and study whether the results of the classical MMP extend to the foliated case.

In lower-dimensional cases, many results concerning the foliated minimal model program, such as the cone theorem, contraction theorem, and the existence of flips, are already known. The MMP for foliated surfaces was developed by McQuillan and Brunella~\cite{McQ08,Bru15}. For corank 1 foliations on threefolds, the program was established by Cascini, Spicer, etc.~\cite{CS21,Spi20,SS21,CM24}. For rank 1 foliations on threefolds, the program is initiated in the works of McQuillan and Bogomolov~\cite{McQ04, BM16}, and established by Cascini and Spicer~\cite{CS20}. For the higher-dimensional case, although the general theory of foliated MMP remains largely open, the program has been established for algebraically integrable foliations, i.e. foliations induced by rational dominant maps~\cite{ACSS22,cascini_mmp_2023,CHLX23,liu_minimal_2024}.

Just as Birkar and Zhang extended the notion of pairs in classical birational geometry to generalized pairs in \cite{BZ16}, one can also generalize foliated triples $(X,\cF,B)$ to \textit{generalized foliated quadruples} $(X,\cF,B,\bM)$, which is first introduced in~\cite{LLM23}. See Definition~\ref{def:GFQS} for the definition of generalized foliated quadruples. 
Recent works have shown that the notion of generalized foliated quadruples is technically important in the context of foliated minimal model program. For instance, the notion of generalized foliated quadruples naturally appears in the study of the canonical bundle formula for foliations (see e.g.~\cite[Theorem 1.3]{LLM23}).
The minimal model program for algebraically integrable generalized foliated quadruples is established in~\cite{CHLX23,liu_minimal_2024}, where numerous results concerning foliations and generalized pairs are obtained using the language and technique of generalized foliated quadruples.
Furthermore, the minimal model program for algebraically integrable adjoint foliated structure, established in \cite{cascini_minimal_2024,cascini_finite_2025}, also heavily relies on the notion of generalized foliated quadruples. Thus, it is natural to ask whether the minimal model program runs for generalized foliated quadruples without algebraically integrability condition, at least for the lower-dimensional case, and what kind of extensions of existing results might arise from this approach. 

\subsection{Main results}
In this paper, we focus on foliations of rank 1 on threefolds. Our main goal is to develop the minimal model program for generalized foliated threefolds of rank one. The following is our first main result, which is a generalization of \cite[Theorem 1.2]{CS20}.

\begin{theorem}\label{MainThm:FoliatedMMP}
    Let $X$ be a normal projective threefold, and let $(X,\cF,B,\bM)$ be an lc generalized foliated quadruple of rank $1$ on $X$. Assume that $X$ is $\Q$-factorial klt. Then
    \begin{enumerate}
        \item[(1)] If $K_\cF + B + \bM_X$ is pseudo-effective, then we may run a $(K_\cF + B +\bM_X)$-MMP which terminates on a minimal model $\phi: X \bir X'$ of $(X,\cF,B,\bM)$.
        \item[(2)] If $K_\cF + B + \bM_X$ is not pseudo-effective, then we may run a $(K_\cF + B +\bM_X)$-MMP which terminates on a Mori fiber space.
    \end{enumerate}
\end{theorem}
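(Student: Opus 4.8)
The plan is to follow the standard architecture of the MMP, reducing the generalized foliated statement to the three foundational pillars — the cone theorem, the contraction theorem, and the existence of flips — together with a termination argument. Since the paper's abstract advertises these as established for rank-one generalized foliated threefolds, I would assume them as inputs proved in earlier sections.

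First I would set up the MMP itself. Starting from the $\Q$-factorial klt threefold $X$ with the lc generalized foliated quadruple $(X,\cF,B,\bM)$ of rank $1$, if $K_\cF+B+\bM_X$ is nef we stop. Otherwise, by the cone theorem for rank-one generalized foliated threefolds, there is a $(K_\cF+B+\bM_X)$-negative extremal ray $R$ in $\NE(X)$, and by the contraction theorem the contraction $c_R\colon X\to Z$ of $R$ exists. One then splits into the usual cases according to the type of $c_R$: if it is a divisorial contraction (Mori fiber type is excluded when $K_\cF+B+\bM_X$ is pseudo-effective, and is the terminus in case (2)), one needs to check that the output $(Z,c_{R*}\cF, c_{R*}B, \bM)$ is again a $\Q$-factorial klt threefold carrying an lc generalized foliated quadruple of rank $1$ — here I would verify $\Q$-factoriality is preserved (extremal contraction of a $\Q$-factorial threefold) and that the rank is unchanged since the foliation is just pushed forward. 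If $c_R$ is a small (flipping) contraction, one invokes the existence of flips for rank-one generalized foliated threefolds to replace $X$ by the flip $X^+$, and again checks that $\Q$-factoriality, kltness of the underlying variety, lc-ness of the quadruple, and rank are all preserved. In case (2), since $K_\cF+B+\bM_X$ is not pseudo-effective, the process must eventually produce a contraction that is not birational, i.e. a Mori fiber space, because pseudo-effectivity is a numerical/birational invariant that would be contradicted by an infinite birational sequence with nef terminus.

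The termination argument is where I expect the main obstacle. In dimension three the classical route is termination via a decreasing invariant — for the foliated case (rank one on threefolds), Cascini--Spicer use special termination together with the fact that a suitable difficulty/discrepancy function decreases along flips, and one must verify that introducing the nef part $\bM$ does not break this monotonicity. Concretely I would: (i) reduce to termination of flips with scaling, or to termination in a neighborhood of the non-klt (here: non-canonical) locus of $\cF$ via special termination, using the foliated adjunction/subadjunction to restrict to flipped/flipping loci; and (ii) control the generalized part by passing to a log resolution where $\bM$ descends as a nef divisor, so that the discrepancy bookkeeping is the same as in the non-generalized case up to the fixed contribution of $\bM$. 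The delicate point is that for rank-one foliations the relevant invariants are governed by foliation singularities along invariant versus non-invariant centers, and one must ensure the generalized boundary $\bM_X$ interacts correctly with the classification of foliation singularities used in \cite{CS20}; I would model the argument closely on their proof, inserting $\bM$ into each estimate and checking that nefness of $\bM$ only helps (contributes non-negatively to the relevant difficulty count). For part (1), once termination is established, the resulting model $X'$ has $K_{\cF'}+B'+\bM_{X'}$ nef and is $\Q$-factorial klt, hence is by definition a minimal model of $(X,\cF,B,\bM)$, and the birational map $\phi\colon X\bir X'$ is the composite of the divisorial contractions and flips, which is an isomorphism in codimension one away from the divisorial steps — completing the proof.
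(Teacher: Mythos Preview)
Your architecture is the standard one, but you miss the paper's central simplifying observation and, as a result, both your termination argument and your treatment of case~(2) have real gaps.

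The paper does \emph{not} prove contraction, flips, or termination afresh for the generalized quadruple. Instead, its key input is Proposition~\ref{proposition:bdivoncurve}: if $C$ is an $\cF$-invariant curve along whose generic point the gfq is lc, then $\bM_X\cdot C\ge 0$. Combined with the cone theorem (Theorem~\ref{theorem:conethmgfq,Q-fac}), which says every $(K_\cF+B+\bM_X)$-negative extremal ray $R$ is spanned by an $\cF$-invariant rational curve $C$, one gets $B\cdot C\ge 0$ (since $C$, as an lc center of $(X,\cF)$, is not in $\Supp B$) and $\bM_X\cdot C\ge 0$, hence $K_\cF\cdot C<0$. Thus every step of the $(K_\cF+B+\bM_X)$-MMP is already a step of the $K_\cF$-MMP, so existence of the contraction/flip and termination come directly from \cite[Theorem 8.10, Theorem 7.1]{CS20}. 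Your plan to ``insert $\bM$ into each estimate'' in the Cascini--Spicer termination proof is unnecessary and, as written, speculative; the paper sidesteps this entirely.

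Your argument for case~(2) is circular: you say the process ``must eventually produce'' a Mori fiber space because an infinite sequence would contradict non-pseudo-effectivity via a nef terminus---but you only get a terminus if the sequence terminates, which is precisely what is at issue. The paper instead observes that if $K_\cF+B+\bM_X$ is not pseudo-effective then neither is $K_\cF$ (as $B+\bM_X$ is pseudo-effective), so by \cite{CP19} the foliation $\cF$ is algebraically integrable, and one concludes by invoking the already-established MMP for algebraically integrable gfqs \cite[Theorem A.6]{liu_minimal_2024}. This is a genuinely different mechanism from anything in your proposal.
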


One of the key steps in proving the above theorem on the generalized foliated MMP of rank one on threefolds is to establish the cone theorem for generalized foliated threefolds as follows.

\begin{theorem}\label{MainThm:conethmgfq}
    Let $X$ be a normal projective threefold and $(X,\cF,B,\bM)$ be a  generalized foliated quadruple of rank 1 on $X$. Then there exist rational curves $\{C_j\}_{j\in \Lambda} $ which are tangent to $\cF$ satisfying
    \[
    0 < -(K_\cF + B + \bM_X) \cdot C_j \leq 2,
    \]
    for each $j \in \Lambda$. Moreover,
    \[
    \NE (X) = \NE(X)_{K_\cF + B + \bM_X\geq 0} + \NE (X)_{\Nlc (X,\cF,B,\bM)} + \sum_{j \in \Lambda} \R_{\geq 0}[C_j],
    \]
    where $\NE (X)_{\Nlc (X,\cF,B,\bM)}$ is the subcone of $\NE(X)$ spanned by the images of $\NE(W) \to \NE(X)$ where $W$ is a non-lc centre of $(X,\cF,B,\bM)$.
\end{theorem}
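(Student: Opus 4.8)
\emph{Strategy.} The plan is to deduce the statement from the cone theorem for (non-generalized) foliated triples of rank $1$ on threefolds, see \cite{CS20}, by passing to a birational model on which $\bM$ descends. Choose a projective birational morphism $f\colon X'\to X$ with $X'$ normal $\Q$-factorial on which $\bM$ descends, so that $M':=\bM_{X'}$ is nef and $\bM=\overline{M'}$; let $\cF':=f^{-1}\cF$ (again of rank $1$) and define $B'$ by
\[
K_{\cF'}+B'+M'=f^*(K_\cF+B+\bM_X),
\]
so that $(X',\cF',B')$ is a foliated triple of rank $1$ (a sub-pair, in general). One arranges moreover that the non-lc locus of $(X',\cF',B')$ equals $f^{-1}\big(\Nlc(X,\cF,B,\bM)\big)$ together with a subset of $\Exc(f)$, and that every non-lc centre of $(X,\cF,B,\bM)$ is dominated by a non-lc centre of $(X',\cF',B')$; this is read off from the definition of generalized foliated discrepancies using the nefness of $M'$.

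\emph{Pushing the cone down.} Apply the foliated cone theorem to $(X',\cF',B')$ to obtain $\cF'$-invariant rational curves $\{C'_k\}$ tangent to $\cF'$ with $0<-(K_{\cF'}+B')\cdot C'_k\le 2$ and
\[
\NE(X')=\NE(X')_{K_{\cF'}+B'\ge 0}+\NE(X')_{\Nlc(X',\cF',B')}+\sum_{k}\R_{\ge 0}[C'_k].
\]
Since $M'$ is nef, $M'\cdot\beta'\ge 0$ for every $\beta'\in\NE(X')$, and the projection formula gives $(K_\cF+B+\bM_X)\cdot f_*\beta'=(K_{\cF'}+B')\cdot\beta'+M'\cdot\beta'$; hence $f_*$ carries $\NE(X')_{K_{\cF'}+B'\ge 0}$ into $\NE(X)_{K_\cF+B+\bM_X\ge 0}$, carries $\NE(X')_{\Nlc(X',\cF',B')}$ into $\NE(X)_{\Nlc(X,\cF,B,\bM)}$ by the previous paragraph, and kills $f$-contracted classes. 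Applying the surjection $f_*\colon\NE(X')\twoheadrightarrow\NE(X)$ to the displayed decomposition, and letting $\{C_j\}_{j\in\Lambda}$ be the distinct non-contracted images $f(C'_k)$ (each an $\cF$-invariant rational curve tangent to $\cF$), we obtain
\[
\NE(X)=\NE(X)_{K_\cF+B+\bM_X\ge 0}+\NE(X)_{\Nlc(X,\cF,B,\bM)}+\sum_{j\in\Lambda}\R_{\ge 0}[C_j],
\]
the reverse inclusion being obvious. Discarding the $C_j$ with $(K_\cF+B+\bM_X)\cdot C_j\ge 0$ (whose rays already lie in $\NE(X)_{K_\cF+B+\bM_X\ge 0}$) leaves only curves with $0<-(K_\cF+B+\bM_X)\cdot C_j$.

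\emph{The length bound.} If $f_*C'_k=d\,C_j$ with $d\ge 1$, then, using $M'\cdot C'_k\ge 0$,
\[
-(K_\cF+B+\bM_X)\cdot C_j=\tfrac{1}{d}\big(-(K_{\cF'}+B')\cdot C'_k-M'\cdot C'_k\big)\le\tfrac{1}{d}\big(-(K_{\cF'}+B')\cdot C'_k\big)\le 2,
\]
so the bound follows from the non-generalized one. Equivalently one argues intrinsically: each $C_j$ is an algebraic leaf of $\cF$, so on its normalization $\nu\colon\bP^1\to C_j$ one has $\nu^*\cO_X(K_\cF)\cong\omega_{\bP^1}(D)$ with $D\ge 0$ supported over $\Sing(\cF)$, whence $-K_\cF\cdot C_j\le 2$, while $B\cdot C_j\ge 0$ (as $B\ge 0$) and $\bM_X\cdot C_j=M'\cdot\widetilde{C_j}\ge 0$.

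\emph{Main obstacle.} The real work lies in the first paragraph: matching the non-lc loci and, more precisely, the non-lc \emph{centres} of $(X,\cF,B,\bM)$ and of $(X',\cF',B')$ under $f$, so that the $\Nlc$-subcone transforms exactly as stated, together with confirming that the cone theorem of \cite{CS20} is available in the non-lc generality and with the length bound needed here -- this last point may itself require a short additional argument. The positivity and surjectivity bookkeeping of the second paragraph and the estimate of the third are then formal.
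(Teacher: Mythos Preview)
Your reduction has a genuine gap: the cone theorem for foliated triples in \cite{CS20,CS24} requires $B'\ge 0$, but your $B'$, defined by $K_{\cF'}+B'+M'=f^*(K_\cF+B+\bM_X)$, is only a sub-boundary in general---its $f$-exceptional components can be negative. You acknowledge this (``a sub-pair, in general'') but then invoke the triple cone theorem anyway; that theorem is neither stated nor proved for sub-triples, and its proof uses $B'\ge 0$ essentially (in adjunction, and in bounding intersections along curves not contained in $\Supp B'$). The non-lc matching you flag as the main obstacle is in fact straightforward---since $\bM$ descends to $X'$ one has $a(E,\cF',B')=a(E,\cF,B,\bM)$ for every $E$ over $X$---but this does not rescue the argument. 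Your ``intrinsic'' length bound hides the same issue: the asserted equality $\bM_X\cdot C_j=M'\cdot\widetilde{C_j}$ is false in general (by the negativity lemma $f^*\bM_X=M'+E$ with $E\ge 0$, so projection gives $d\,\bM_X\cdot C_j=(M'+E)\cdot\widetilde{C_j}$, and $E\cdot\widetilde{C_j}$ can be negative when $\widetilde{C_j}\subset\Supp E$), and likewise $B\cdot C_j\ge 0$ fails if $C_j\subset\Supp B$. That $\bM_X\cdot C_j\ge 0$ along an $\cF$-invariant curve at whose generic point the gfq is lc is precisely Proposition~\ref{proposition:bdivoncurve} of the paper, a nontrivial statement.

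The paper proceeds differently: it first proves the $\Q$-factorial gfq cone theorem (Theorem~\ref{theorem:conethmgfq,Q-fac}) directly, with Proposition~\ref{proposition:bdivoncurve} as the key new input, and only then passes to a crepant model on which the transformed boundary is genuinely effective (Lemma~\ref{lemma:crepantmodel}). That model is built via the gfq MMP of Proposition~\ref{prop:simpleMMP}, not by merely descending $\bM$, and one applies the $\Q$-factorial gfq cone theorem there rather than the triple one.
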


Note that we can show the length of extremal rays can be bounded by $2$, rather than the commonly expected bound $2\dim X = 6$. Also in the cone theorem stated above, we do not assume that either $K_\cF$ or $B$ is $\R$-Cartier. We also note that Cascini and Spicer proved a version of the cone theorem for rank one foliated triples $(X, \cF, B)$ in arbitrary dimension, under the assumption that both $K_\cF$ and $B$ are $\R$-Cartier (see \cite[Theorem 4.8]{CS24}). In contrast, our result applies only to threefolds. See Remark~\ref{rmk:WhyCS24notwork} for details.

As an application of the MMP above, we prove a version of  base-point-free theorem for foliated triples on threefolds, which generalize the result of \cite[Theorem 1.1]{CS25bpf}.

\begin{theorem}\label{MainThm:BPF}
    Let $X$ be a normal projective threefold, and let $(X,\cF,B)$ be an lc foliated triple of rank $1$ on $X$. Assume that $X$ is $\Q$-factorial klt. Let $D$ be a nef $\R$-divisor on $X$ such that $D - (K_\cF + B)$ is an ample $\R$-divisor. Then $D$ is semi-ample.
\end{theorem}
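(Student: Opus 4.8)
\emph{Proof plan.} The idea is to use the generalized foliated minimal model program of Theorem~\ref{MainThm:FoliatedMMP}, together with the cone theorem of Theorem~\ref{MainThm:conethmgfq}, to contract the face of curves on which $D$ is numerically trivial, reducing the statement to the case where the transform of $D$ is either ample or is pulled back from a lower dimensional variety. First I would record the elementary reductions. Since $X$ is $\Q$-factorial, $D$ and $K_\cF+B$ are $\Q$-Cartier. If $D$ is ample there is nothing to prove, so we may assume otherwise and set $A:=D-(K_\cF+B)$, an ample $\Q$-divisor. Consider the face $F:=D^{\perp}\cap\NE(X)$, which is nonzero because $D$ is nef but not ample. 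For every nonzero $\gamma\in F$ one has $(K_\cF+B)\cdot\gamma=-A\cdot\gamma<0$, so $F$ lies in the $(K_\cF+B)$-negative part of $\NE(X)$; since $(X,\cF,B)$ is lc the subcone $\NE(X)_{\Nlc(X,\cF,B)}$ vanishes, and Theorem~\ref{MainThm:conethmgfq} gives $\NE(X)=\NE(X)_{K_\cF+B\geq 0}+\sum_{j\in\Lambda}\R_{\geq 0}[C_j]$. Hence $F$ is a rational polyhedral extremal face, spanned by finitely many of the $\cF$-invariant tangent rational curves $C_j$, all of whose classes are $(K_\cF+B)$-negative.

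Next I would run a $(K_\cF+B)$-MMP that is \emph{$D$-trivial}: at each stage one picks an extremal ray lying in the birational transform of the $D$-trivial face --- it is $(K_\cF+B)$-negative by the above --- and contracts it or performs its flip via Theorem~\ref{MainThm:FoliatedMMP}. Such a step preserves $\Q$-factoriality and keeps the transform of $D$ nef, since $D$ descends along a divisorial or Mori fiber contraction of a ray on which it is numerically trivial and is unchanged across a $D$-trivial flip. For termination one runs this as a minimal model program with scaling: after replacing $B$ by $B+A''$ for a general $0\leq A''\sim_\Q\varepsilon A$ with $(X,\cF,B+A'')$ still lc, the $(K_\cF+B+A'')$-MMP with scaling of $A$ is $D$-trivial --- at each stage the scaling threshold is the value for which the boundary plus the ample part recovers the (nef, non-ample) transform of $D$ --- and it terminates by Theorem~\ref{MainThm:FoliatedMMP}, while the relative Picard number drops at every non-flipping step. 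Thus we reach $\psi\colon X\bir Y$ with two possible outcomes: either the transform $D_Y$ is nef with $D_Y^{\perp}\cap\NE(Y)=\{0\}$, so $D_Y$ is ample by Kleiman's criterion and $D$ is semiample; or the last step is a Mori fiber contraction $f\colon Y\to Z$ with $\dim Z<3$, $D_Y=f^{*}D_Z$, and $D_Z$ nef.

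In the second outcome I would conclude by induction on dimension. The contracted curves are tangent to $\cF_Y$ and sweep out $Y$, so $\cF_Y$ is algebraically integrable; using the canonical bundle formula for foliations of~\cite{LLM23} together with the structure of rank-one foliated Mori fiber spaces, one descends to $Z$ a (generalized) klt structure with respect to which $D_Z$ satisfies the hypotheses of a base-point-free theorem in dimension $\leq 2$, so that $D_Z$ is semiample --- the cases $\dim Z\leq 1$ being automatic, and the numerically trivial subcase $\dim Z=0$ being handled using that $-(K_{\cF_Y}+B_Y)$ is then ample. Finally, since $\psi$ is $D$-trivial, the semiampleness of $D_Y$, respectively of $D_Z$, propagates back along a common resolution of $\psi$ to give that $D$ is semiample.

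The main obstacle I expect is twofold. First, one must run the $(K_\cF+B)$-MMP in a $D$-trivial way while keeping it terminating: the delicate point is that the ampleness of $D-(K_\cF+B)$ --- exactly what forces the $D$-trivial face to remain $(K_\cF+B)$-negative, hence supplies rays the MMP can contract --- has to be propagated through the contractions and flips, and it is here that the perturbation $B\rightsquigarrow B+A''$ and, if necessary, a ``nef and big'' strengthening of the hypothesis enter. Second, one must analyze the Mori fiber space output, particularly over a surface, where the structure theory of rank-one foliated Mori fiber spaces on threefolds is needed to produce on the base the pair structure that makes a known base-point-free theorem applicable.
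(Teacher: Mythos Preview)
Your approach has a genuine gap at the perturbation step. You write ``after replacing $B$ by $B+A''$ for a general $0\leq A''\sim_\Q\varepsilon A$ with $(X,\cF,B+A'')$ still lc,'' but Bertini-type theorems fail for foliations: for a rank-one foliated triple there is in general no member $A''\in|\varepsilon A|_\Q$ with $(X,\cF,B+A'')$ lc, regardless of how small $\varepsilon$ is (see the discussion in the introduction and \cite[Example 3.4]{DLM23}). This is precisely why the paper passes to generalized foliated quadruples: one replaces the ample divisor $A=D-(K_\cF+B)$ by the b-divisor $\overline{A}$, so that $(X,\cF,B,\overline{A})$ is automatically lc, and then proves the more general Theorem~\ref{Thm:BPFgfq}. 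Theorem~\ref{MainThm:BPF} is literally the case $\bM=\overline{0}$ there, but the proof of Theorem~\ref{Thm:BPFgfq} uses the nef part $\bM$ in an essential way even for that case.

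There is a second, related issue with running a $D$-trivial MMP through flips. After a $D$-trivial flip $X\dashrightarrow X'$, the transform $A'$ of $A$ satisfies $A'\cdot C'=-\,(K_{\cF'}+B')\cdot C'<0$ on each flipped curve $C'$, so $A'$ is no longer ample and the $D'$-trivial face need not lie in the $(K_{\cF'}+B')$-negative part of $\NE(X')$; your MMP can simply run out of rays to contract before $D$ becomes ample. The paper avoids this entirely: in the big case it contracts a divisorial ray if one exists, absorbs $A-\phi^*D$ (for small ample $D$ on the target) into the nef part $\bM$ via its Cartier closure, and inducts on the Picard number; when all $D$-trivial rays are small it does not flip at all but glues the flipping contractions simultaneously and argues by contradiction that the pushforward of $D$ must be ample. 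Your treatment of the Mori fiber space output via the canonical bundle formula is close in spirit to the paper's Lemma~\ref{lemma:BPFsemiamp}, but getting there requires the gfq machinery rather than a foliated Bertini argument.
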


In the above statement, let $A = D - (K_\cF + B)$. In \cite[Theorem 1.1]{CS25bpf}, the same result is proved under the assumptions that $(X, \cF, B + A)$ is lc and that both $B$ and $A$ are $\Q$-divisors. Our proof instead relies on the generalized foliated structure; see Theorem~\ref{Thm:BPFgfq} for the generalized version.

In the end of this paper, we also give a proof of the ascending chain condition (ACC) for log canonical thresholds for generalized foliated threefolds of rank 1 as a corollary of Theorem~\ref{MainThm:FoliatedMMP}, generalizing the result of Y. Chen in \cite{Che22}. See Definition~\ref{def:GFLCT} for the definition of generalized foliated log canonical thresholds.

\begin{theorem}\label{MainThm:ACCforLCT}
    Let $I\subset [0,+\infty)$ be a DCC set. Then there exists an ACC set $J$ depending only on $I$ such that if $X$ is a normal projective threefold, $(X,\cF,B,\bM)$ is an lc generalized foliated quadruple of rank $1$ on $X$, and there exist an $\R$-divisor $D $ and a b-divisor $\bN$ on $X$ satisfying
    \begin{enumerate}
        \item[(1)] $D + \bN_X$ is $\R$-Cartier,
        \item[(2)] the coefficients of $B$ and $D$ belong to $I$, and 
        \item[(3)] both $\bM$ and $\bN$ are $I$-linear combinations of b-nef b-Cartier b-divisors on $X$,
    \end{enumerate}
    then $\lct (X,\cF,B,\bM;D,\bN) \in J$.
\end{theorem}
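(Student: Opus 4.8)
The plan is to argue by contradiction, combining a divisorial extraction carried out with the rank one generalized foliated MMP (Theorem~\ref{MainThm:FoliatedMMP}) with foliated adjunction and induction on dimension. Suppose the theorem fails for some DCC set $I$. Then we may find generalized foliated quadruples $(X_i,\cF_i,B_i,\bM_i)$ on normal projective threefolds, $\R$-divisors $D_i$ and b-divisors $\bN_i$ satisfying (1)--(3), such that the numbers $t_i:=\lct(X_i,\cF_i,B_i,\bM_i;D_i,\bN_i)$ form a strictly increasing sequence (in particular each $t_i$ is finite). After, if necessary, replacing $X_i$ by a $\Q$-factorial klt model compatible with the foliated data, we may assume $X_i$ is $\Q$-factorial klt. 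By definition of the generalized foliated log canonical threshold, $(X_i,\cF_i,B_i+t_iD_i,\bM_i+t_i\bN_i)$ is lc while it fails to be lc once the coefficient $t_i$ is increased; hence there is a divisor $E_i$ over $X_i$ computing the threshold, in the sense that $a(E_i;X_i,\cF_i,B_i+tD_i,\bM_i+t\bN_i)\ge 0$ if and only if $t\le t_i$.

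First I would extract $E_i$. Running a suitable $(K_\cF+\Gamma)$-MMP over $X_i$ from a foliated log resolution and using the termination provided by Theorem~\ref{MainThm:FoliatedMMP} (in its relative form), we obtain a birational morphism $g_i\colon Y_i\to X_i$ from a $\Q$-factorial normal projective threefold with $\Exc(g_i)=E_i$ a prime divisor and
\[
K_{\cF_{Y_i}}+B_{Y_i}+t_iD_{Y_i}+E_i+\bM_{i,Y_i}+t_i\bN_{i,Y_i}=g_i^\ast\bigl(K_{\cF_i}+B_i+t_iD_i+\bM_{i,X_i}+t_i\bN_{i,X_i}\bigr),
\]
where $\cF_{Y_i}$ is the transformed foliation and $B_{Y_i},D_{Y_i}$ the strict transforms; thus $(Y_i,\cF_{Y_i},B_{Y_i}+t_iD_{Y_i}+E_i,\bM_i+t_i\bN_i)$ is lc and $E_i$ has coefficient one. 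Since $E_i$ is a log canonical centre of a rank one generalized foliated quadruple on a threefold, $E_i$ is $\cF_{Y_i}$-invariant.

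Next I would apply foliated adjunction along $E_i$. As $E_i$ is invariant, the restricted foliation $\cF_{E_i}$ is a rank one foliation on the surface $E_i$, and adjunction yields effective $\R$-divisors $B_{E_i},D_{E_i}$ and b-divisors $\bM''_i,\bN''_i$ with
\[
\bigl(K_{\cF_{Y_i}}+B_{Y_i}+t_iD_{Y_i}+E_i+\bM_{i,Y_i}+t_i\bN_{i,Y_i}\bigr)\big|_{E_i}=K_{\cF_{E_i}}+B_{E_i}+t_iD_{E_i}+\bM''_{i,E_i}+t_i\bN''_{i,E_i},
\]
where, by the generalized foliated adjunction formula, the coefficients of $B_{E_i}$ and $D_{E_i}$ lie in a DCC set $I'$ depending only on $I$, while $\bM''_i,\bN''_i$ are again $I$-linear combinations of b-nef b-Cartier b-divisors. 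The restricted quadruple $(E_i,\cF_{E_i},B_{E_i}+t_iD_{E_i},\bM''_i+t_i\bN''_i)$ is lc, and since $E_i$ computed the threshold upstairs, $t_i=\lct(E_i,\cF_{E_i},B_{E_i},\bM''_i;D_{E_i},\bN''_i)$ and $(D_{E_i},\bN''_i)$ is nontrivial. This reduces the assertion to the ACC for generalized foliated log canonical thresholds on surfaces carrying a rank one foliation; repeating the extraction-and-adjunction argument, now with the foliated surface MMP of McQuillan and Brunella~\cite{McQ08,Bru15} and adjunction along an invariant curve, reduces this in turn to the ACC for generalized log canonical thresholds on curves, which is classical. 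This contradicts the strict monotonicity of $\{t_i\}$ and finishes the proof.

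The main obstacle is the extraction step: one needs the full strength of the rank one generalized foliated MMP (Theorem~\ref{MainThm:FoliatedMMP}) to produce the $\Q$-factorial model $Y_i$ on which $E_i$ becomes a divisor, and one must check that the relevant log canonical centres are $\cF$-invariant at each stage so that foliated adjunction is available throughout the induction. A secondary, more bookkeeping, point is to verify that the coefficients of $B_{E_i}$ and $D_{E_i}$ and the nef parts produced by adjunction remain controlled by a DCC set depending only on $I$.
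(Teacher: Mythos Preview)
Your overall strategy (contradiction, extraction, adjunction, dimension reduction) matches the paper's, but the extraction step has a real gap. You assert that the extracted divisor $E_i$ is $\cF_{Y_i}$-invariant ``since $E_i$ is a log canonical centre of a rank one generalized foliated quadruple''. This is false: an lc place $E$ satisfies $a(E,\cF,B,\bM)=-\epsilon(E)$, and nothing prevents $\epsilon(E)=1$. Indeed, you write $E_i$ into the boundary with coefficient $1$, which is precisely the non-invariant case; if $E_i$ were invariant the coefficient would be $0$, and adjunction along a non-invariant $E_i$ yields a rank $0$ foliation, so the induction collapses. The paper's fix is Lemma~\ref{lemma:non-can,discrep}: for an lc rank one gfq, any exceptional discrepancy $<0$ equals $-1$. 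Hence if $\Gamma$ is non-invariant and an lc place of $(X_i,\cF_i,B_i+t_iD_i,\bM_i+t_i\bN_i)$, linearity in $t$ forces $a(\Gamma,\cF_i,B_i,\bM_i)=-1$ as well, so the \emph{new} lc places (those witnessing the threshold) are invariant with discrepancy $0$. Lemma~\ref{lemma:ACC1} then extracts such a $T_i$ while also arranging simple singularities (needed for the coefficient control of Proposition~\ref{prop:Pre.AdjF.Simple}) and that $-T_i$ is relatively ample.

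The reduction to a ``surface ACC'' is also a gap. After adjunction to the invariant $T_i$, the restricted data is only a surface num-gfq and there is no extraction theorem available to iterate your argument; nor is it clear that $t_i$ is again computed as an lc threshold on $S_i$. The paper instead uses that $-T_i$ is ample over $X_i'$ to make the restricted log canonical divisor numerically trivial over the centre $Q_i$, and then splits by $\dim Q_i$. When $\dim Q_i=1$ one intersects with a general fibre of $S_i\to Q_i^\nu$; when $\dim Q_i=0$ one must first prove that the restricted foliation $\cF_{S_i}$ is algebraically integrable (via non-pseudo-effectivity of $K_{\cF_{S_i}}$ and \cite{CP19}) before a general leaf $L_i$ is available. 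In both cases Lemma~\ref{lemma:forACC.adj.curves} supplies the integrality that feeds into Lemma~\ref{lemma:finiteset} to reach a contradiction. Your ``repeat with the surface MMP and adjoin to an invariant curve'' omits the algebraic integrability step, without which there is no invariant curve to adjoin to.
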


Various results have been proved concerning the ACC for log canonical thresholds for foliations. Besides the classical case ($\cF = T_X$), proved in \cite{HMX14} for pairs or \cite{BZ16} for generalized pairs, \cite{Che22} proved the ACC for log canonical thresholds of foliated triples in dimension $\leq 3$. For algebraically integrable foliations, \cite{DLM23} proved ACC for log canonical thresholds for foliated triples, which is generalized in \cite{CHLX23} to the case of generalized foliated quadruples.

\subsection{Idea of the proof} First we state the main ideas and the sketch of the proofs of Theorem~\ref{MainThm:FoliatedMMP} and Theorem~\ref{MainThm:conethmgfq}. Assume $X$ is $\Q$-factorial. A key observation is that if $C$ is an $\cF$-invariant curve such that $(X,\cF,B,\bM)$ is lc at the generic point of $C$, then the degree of $\bM_X$ along $C$ is non-negative  (see Proposition \ref{proposition:bdivoncurve}).
As a corollary, each $(K_\cF + B + \bM_X)$-negative extremal ray considered in the cone theorem is in fact $(K_\cF + B)$-negative.
Thus to prove the cone theorem, we may reduce to the foliated triple case and follow the similar approach as in \cite{CS24}. Using the bend and break lemma and adjunction on surfaces, we can prove the cone theorem assuming the ambient variety is $\Q$-factorial (see Theorem~\ref{theorem:conethmgfq,Q-fac}). In particular, we need a version of the cone theorem for foliated surfaces. To this end, we consider intersection theory on normal surfaces in the sense of Mumford, and state a version of the cone theorem for surface num-gfqs (Definition~\ref{def:NumGFQ}). Section~\ref{section:normalsurfaces} is devoted to this purpose. 
Moreover, any $(K_\cF + B + \bM_X)$-MMP is a step of $(K_\cF + B)$-MMP again by Proposition~\ref{proposition:bdivoncurve}, and the existence of contractions and flips can be reduced to the results in \cite{CS20}, which proves Theorem~\ref{MainThm:FoliatedMMP}. Theorem~\ref{MainThm:conethmgfq} then follows directly from the existence of the MMP.

In the setting of Theorem~\ref{MainThm:BPF}, let $A = D - (K_\cF + B)$. Since Bertini-type theorems fail for foliations (see e.g. \cite[Example 3.4]{DLM23}), the foliated triple $(X, \cF, B + A)$ is not log canonical in general. However, we may regard $A$ as a b-divisor $\overline{A}$, under which the generalized foliated quadruple $(X, \cF, B, \overline{A})$ is lc. We then generalize the approach of~\cite{CS25bpf}
to conclude the result. The proof of Theorem~\ref{MainThm:ACCforLCT} follows similar ideas as in \cite{Che22} or \cite[Proposition 4.6]{LMX24}. In our case, we need a more detailed study on coefficients appearing in adjunction formulas when working with b-divisors, as developed in Section~\ref{section:ACCforLCT}.

\subsection*{Acknowledgement} The author would like to express his sincere gratitude to his advisor, Professor Meng Chen, for his support and encouragement. Part of this work was completed during his time as a visiting student at Imperial College London. The author would like to thank Professor Paolo Cascini for his hospitality and numerous helpful discussions. He also thanks Jihao Liu for his help and for suggesting this problem. The author is grateful to Hexu Liu, Yuting Liu, Roktim Mascharak, and Minzhe Zhu for their valuable discussions. 
The author also appreciates the referee's valuable suggestions and corrections.
The author is supported by the China Scholarship Council (No. 202306100155) and the Fudan Elite PhD Program.

\section{Preliminaries}

We work over the field of complex numbers $\mathbb{C}$. All varieties are assumed to be quasi-projective over $\mathbb{C}$. We adopt the standard notations and definitions in birational geometry as in \cite{KM98} and \cite{BCHM10}, and use them freely throughout the paper. The coefficient of a prime divisor $D$ in an $\R$-divisor $M$ is denoted by $\mu_D M$.

\subsection{Basic definitions} We begin by recalling some basic definitions and notations related to foliations and b-divisors, and define generalized foliated quadruples, following \cite{CS21,HL23,CHLX23}.

\begin{definition}[Foliations]

Let $X$ be a normal variety. A \textit{foliation} $\cF$ on $X$ is a coherent subsheaf $\cF\subset T_X$ such that
\begin{enumerate}
    \item[(1)] $\cF$ is saturated in $T_X$, i.e. $T_X/\cF$ is torsion free, and 
    \item[(2)] $\cF$ is closed under the Lie bracket.
\end{enumerate}
The \textit{rank} of $\cF$ is the rank of $\cF$ as a coherent sheaf and is denoted by $\rank \cF$. The \textit{canonical divisor} of $\cF$ is a Weil divisor $K_\cF$ such that $\cO_X(-K_\cF)\cong \det(\cF)$. 

Let $X$ be a normal variety and $\cF$ be a foliation of rank $r$ on $X$. We may associate $\cF$ to a morphism:
\[
\phi: \Omega_X^{[r]} \to \cO_X(K_\cF),
\]
defined by taking the double dual of the $r$-wedge product of the map $\Omega_X^1 \to \cF^*$, which is induced by the inclusion $\cF \subset T_X$. This yields a map 
\[
\phi': (\Omega_X^{[r]}\otimes \cO_X(-K_\cF))^{**} \to \cO_X, 
\]
and we define the \textit{singular locus}, denoted by $\Sing \cF$, to be the cosupport of the image of $\phi'$. 

\end{definition}

\begin{definition}[Pullbacks]
    Let $X$ be a normal variety and let $\cF$ be a foliation on $X$. Let $f: Y \bir X$ be a dominant map. We denote $f^{-1}\cF$ to be the \textit{pullback} of $\cF$ as in \cite[3.2]{Dru21}. We also say $f^{-1}\cF$ is the \textit{induced foliation} of $\cF$ on $Y$. If $\cF = 0$, then we say $f^{-1}\cF$ is the foliation induced by $f$.  If a foliation $\cG$ is induced by a dominant map, then we say $\cG$ is \textit{algebraically integrable}. If $g: X \bir X'$ is a birational map, then we define the \textit{pushforward} $g_* \cF$ of $\cF$ on $X'$ to be $(g^{-1})^{-1}(\cF)$.
\end{definition}

\begin{definition}[Invariance]

Let $S$ be a subvariety of $X$. We say $S$ is \textit{$\cF$-invariant} if for any open subset $U\subset X$ and any section $\partial \in H^0(U,\cF)$, we have 
$ \partial(\mathcal{I}_{S \cap U}) \subset \mathcal{I}_{S \cap U}$, where $\mathcal{I}_{S \cap U}$ is the ideal sheaf of $S \cap U$. If $D\subset X$ is a prime divisor, we define $\epsilon(D) = 0$ if $D$ is $\cF$-invariant and $\epsilon(D) = 1$ if $D$ is not $\cF$-invariant. 

\end{definition}

\begin{definition}[b-divisors]

Let $X$ be a normal variety, and let $X \bir X'$ is a birational map.
For any valuation $\nu$ over $X$, we denote by $\nu_{X'}$ the center of $\nu$ on $X'$. A \emph{b-divisor} $\bM$ is a formal sum $\bM = \sum r_\nu \nu$ where $\nu$ are valuations over $X$ and $r_\nu \in \R$, such that $\nu_X$ is a divisor only for finitely many $\nu$. The trace of $\bM$ on $X'$ is defined as 
\[
\bM_{X'} := \sum_{\nu_{X'} \text{ is a divisor}} r_\nu \nu_{X'},
\]
which is an $\R$-divisor on $X'$. If $\bM_{X'}$ is $\R$-Cartier and $\bM_{Y'} = g^* \bM_{X'}$ for any birational morphism $g: Y' \to X'$, then we say $\bM$ \emph{descends} to $X'$, or say $\bM$ is the \emph{closure} of $\bM_{X'}$, and we write $\bM = \overline{\bM_{X'}}$.

A b-divisor $\bM$ on $X$ is called \textit{b-$\R$-Cartier} if there exists a birational morphism $f:Y \to X$ such that $\bM = \overline{\bM_Y}$. 
In this case, if $\bM_Y$ is nef on $Y$, then we say that $\bM$ is a \textit{b-nef} b-\R-Cartier divisor. If $\bM_Y$ is a Cartier divisor (resp. a $\Q$-Cartier $\Q$-divisor), then we say that $\bM$ is \emph{b-Cartier} (resp. \emph{b-$\Q$-Cartier}).

\end{definition}

\begin{definition}[Restricted b-divisors]\label{def:Restrictedbdiv}
    
Let $\bM$ be a b-\R-Cartier divisor on $X$. Let $S$ be a prime divisor on $X$ and $\nu: S^\nu \to S$ be the normalization of $S$. Let $f: Y \to X$ be a log resolution of $(X,S)$ such that $\bM$ descends to $Y$. Let $S_Y = f^{-1}_*S$, then there exists a birational morphism $g: S_Y \to S^\nu$ such that $f|_{S_Y} = \nu \circ g$. We define the \textit{restricted b-divisor} $\bM|_{S^\nu}$ of $\bM$ to $S^\nu$ to be the closure of $ \bM_Y |_{S_Y} $. Here $\bM_Y|_{S_Y}$ is determined up to $\R$-linear equivalence. Note that if $\bM$ is nef b-\R-Cartier, then so is $\bM|_{S^\nu}$. Moreover, if $\bM$ is b-Cartier, then so is $\bM|_{S^\nu}$.

\end{definition}

\begin{definition}
Fix $I\subset \R$. We say a b-$\R$-Cartier b-divisor $\bM$ on $X$ is an \textit{$I$-linear combination} of b-Cartier b-divisors, if $\bM = \sum \mu_i\bM^i$ for some b-Cartier b-divisors $\bM^i$ on $X$ and $\mu_i \in I$. Sometimes for simplicity, we may just say that the coefficients $\mu_i$ of $\bM^i$ belong to $I$, or just $\bM \in I$. We also say $\bM_i$ \textit{appears in} $\bM$ if $\mu_i \neq 0$. Note that if $S$ is a prime divisor on $X$ and  $\bM = \sum \mu_i \bM^i$, then $\bM|_{S^\nu} = \sum \mu_i \bM^i|_{S^\nu}$. Thus $\bM\in I$ implies $\bM|_{S^\nu}\in I$.
\end{definition}

\begin{definition}[Generalized foliated quadruples]\label{def:GFQS}
    
A \textit{generalized foliated sub-quadruple} $(X,\cF,B,\bM)$ (\textit{sub-gfq} for short) consists of a normal variety $X$, a foliation $\cF$ on $X$, an \R-divisor $B$ on $X$, a b-nef b-\R-Cartier b-divisor $\bM$ on $X$, such that $K_\cF + B + \bM_X$ is $\R$-Cartier. If $B\geq 0$, we say $(X,\cF,B,\bM)$ is a \textit{generalized foliated quadruple} (\textit{gfq} for short). If $\bM = \overline{0}$ (and $B \geq 0$), we drop $\bM$ and say $(X,\cF,B)$ is a \textit{foliated sub-triple} (\textit{foliated triple}). 

\end{definition}

\begin{definition}[Singularities]
Let $(X, \cF,B,\bM)$ be a sub-gfq. For any prime divisor $E$ over $X$, suppose $f: Y\to X$ is a birational morphism such that $E$ is a prime divisor on $Y$, then we may write 
\[
K_{\cF_Y} + B_Y + \bM_Y = f^* (K_\cF + B + \bM_X),
\]
where $\cF_Y = f^{-1}\cF$. We define $a(E,\cF,B,\bM) := -\mu_E B_Y$ to be the \textit{discrepancy} of $E$ with respect to $(X,\cF,B,\bM)$. It is easy to see that the definition of $a(E,\cF,B,\bM)$ is independent of the choice of $Y$. If $\bM = \overline{0}$ (resp. $B = 0$), we will drop $\bM$ (resp. $B$) in the notation of discrepancies. If $\cF = T_X$, the definition of the discrepancies coincides with the classical definition of discrepancies for generalized pairs as in \cite{BZ16}. 

For a (sub)-gfq $(X, \cF, B, \bM)$, we say $(X,\cF,B,\bM)$ is \textit{(sub)-log canonical} (\textit{(sub)-lc} for short) if $a(E,\cF,B,\bM) \geq -\epsilon(E)$ for any prime divisor $E$ over $X$. We say $(X,\cF,B,\bM)$ is \textit{(sub)-canonical} (resp. \textit{(sub)-terminal}) if $a(E,\cF,B,\bM) \geq 0$ (resp. $>0$) for any prime divisor $E$ exceptional over $X$. An \textit{lc place} (resp. \textit{non-lc place}) of $(X,\cF,B,\bM) $ is a prime divisor $E$ over $X$ with $a(E,\cF, B,\bM) = -\epsilon(E)$ (resp. $a(E,\cF, B,\bM) < -\epsilon(E)$) . An \textit{lc center} (resp. \textit{non-lc center}) of $ (X,\cF,B,\bM) $ is the center of an lc place (resp. non-lc place) of $(X,\cF,B,\bM)$ on $X$. 

Let $(X, \cF,B,\bM)$ be a gfq, and $P\in X$ be a not necessarily closed point of $X$. We say that $(X,\cF,B ,\bM)$ is lc (resp. canonical, terminal) at $P$ if for every divisor $E$ over $X$ whose center is the Zariski closure $\overline{P}$ of $P$, the discrepancy of $E$ is $\geq -\epsilon(E)$ (resp. $\geq 0$, $>0$). 

\end{definition}

\begin{definition}[Generalized foliated log canonical thresholds]\label{def:GFLCT}
    Let $(X,\cF,B,\bM)$ be a gfq. Let $D\geq 0$ be an effective divisor on $X$ and $\bN$ be a b-nef b-divisor on $X$ such that $D + \bN_X$ is $\R$-Cartier. We define the \textit{generalized foliated log canonical threshold} of $(D, \bN)$   with respect to $(X, \cF, B ,\bM)$ to be 
    \[
    \lct (X,\cF,B,\bM;D,\bN) := \sup \{ \,s \in \R \mid (X,\cF,B + sD, \bM + s\bN)\, \text{is lc}\, \}.
    \]
    If $\cF = T_X$, we may omit $\cF$, and the definition coincides with the generalized lc thresholds for generalized pairs (\cite[Definition 4.3]{BZ16}).
\end{definition}

\subsection{Finite morphism}

In this subsection we consider the finite pullback of gfqs. First we define the pullbcak of b-$\R$-Cartier b-divisor under finite morphisms.

\begin{definition}[Finite pullback]
    Let $\sigma:X' \to X$ be a finite morphism between normal varieties and let $\bM$ be a b-$\R$-Cartier b-divisor on $X$. If $Y \to X$ is a birational morphism such that $\bM$ descends to $Y$, then let $Y' $ be the normalization of the main component of $X'\times_X Y$, and let $\sigma': Y' \to Y$ be the induced morphism, and we define the pullback $\sigma^*\bM$ of $\bM$ under $\sigma$ to be the closure of the $\R$-Cartier divisor $\sigma'^*\bM_{Y}$ on $Y'$. It is easy to see that the definition is independent of the choice of $Y$.
\end{definition}

\begin{lemma}\label{lemma:finitepullbackMX}
    Let $\sigma:X' \to X$ be a finite morphism between normal varieties and let $\bM$ be a b-$\R$-Cartier b-divisor on $X$. Let $\bM' = \sigma^*\bM$ be the pullback b-divisor of $\bM$ along $\sigma$. If $\bM_X$ is $\R$-Cartier, then $\bM'_{X'} = \sigma^* \bM_X$.
\end{lemma}

\begin{proof}
    Let $f: Y \to X$ be a birational morphism such that $Y $ is smooth and $\bM$ descends to $Y$. Let $Y'$ be the normalization of the main component of $X'\times_X Y$, and let $\sigma': Y' \to Y$, $g: Y' \to X'$ be the induced morphisms. We may write $\bM_Y + E = f^* \bM_X$, where $E$ is exceptional over $X$. By the definition of the pullback, $\bM'_{Y'} = \sigma'^*\bM_Y$. Then we have $\bM'_{Y'} + \sigma'^*E= \sigma'^*f^*\bM_X = g^*\sigma^* \bM_X$, and hence $\bM_{X'} + g_* \sigma'^* E = \sigma^*\bM_X$. Note that since $\sigma$, $\sigma'$ are both finite, for each prime divisor $D$ exceptional over $X$, $\sigma'^{-1}(D)$ is exceptional over $X'$. Thus $g_*\sigma'^*E = 0$ and we may conclude.
\end{proof}

\begin{lemma}\label{lemma:GFQfinite}
    Let $(X,\cF,B,\bM)$ be a sub-gfq and $\sigma: X' \to X$ be a finite morphism between normal varieties. Let $Z'$ be an irreducible subvariety of $X'$ and let $Z = \sigma(Z')$. Let $\cF' = \sigma^{-1}\cF$ and $\bM' = \sigma^*\bM$. We may write 
    \[
    K_{\cF'} + B' + \bM'_{X'} := \sigma^*(K_\cF + B + \bM_X),
    \]
    which defines a sub-gfq $(X',\cF',B',\bM')$. Then
    \begin{enumerate}
        \item[(1)] $(X,\cF,B,\bM)$ is sub-lc at the generic point of $Z$ if and only if $(X',\cF',B',\bM')$ is sub-lc at the generic point of $Z'$. Moreover, $Z$ is an lc center of $(X,\cF,B,\bM)$ if and only if $Z'$ is an lc center of $(X',\cF',B',\bM')$.
        \item[(2)] If $(X,\cF,B,\bM)$ is sub-canonical (resp. sub-terminal) at the generic point of $Z$, then  $(X',\cF',B',\bM')$ is sub-canonical (resp. sub-terminal) at the generic point of $Z'$.
    \end{enumerate}
\end{lemma}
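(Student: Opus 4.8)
The plan is to reduce everything to a statement about discrepancies of individual divisorial valuations, since all of the singularity conditions in the statement are defined valuation-by-valuation. Fix a prime divisor $E'$ over $X'$ whose center on $X'$ is contained in $\overline{Z'}$; I want to relate $a(E',\cF',B',\bM')$ to the discrepancy of a suitable divisor $E$ over $X$. First I would choose a birational morphism $Y\to X$ on which both $E$ (the image valuation of $E'$ on $X$) and $\bM$ descend, take $Y'$ to be the normalization of the main component of $X'\times_X Y$ with induced finite map $\sigma':Y'\to Y$, and after possibly blowing up $Y'$ further, arrange that $E'$ is a divisor on $Y'$. The key input is the behaviour of the foliated canonical divisor under the finite (and generically étale along the relevant locus, or at worst tamely ramified) base change: one has the Riemann–Hurwitz-type formula $K_{\cF_{Y'}} = \sigma'^*K_{\cF_Y} + R_{\cF}$, where $R_\cF$ is an effective divisor supported on the ramification locus and, crucially, every component of $R_\cF$ that is not $\cF_{Y'}$-invariant appears with coefficient equal to $(\text{ramification index})-1\ge 0$, while $\cF_{Y'}$-invariant components are unramified for a foliation pulled back under a finite map (this is the foliated analogue of the classical ramification formula, see \cite{Dru21}). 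Combined with $\bM'_{Y'} = \sigma'^*\bM_Y$ by the definition of the finite pullback and with $\sigma'^* B_Y$ being the divisorial part of $B'_{Y'}$ plus again an effective ramification contribution, writing out $K_{\cF_{Y'}} + B'_{Y'} + \bM'_{Y'} = \sigma'^*(K_{\cF_Y}+B_Y+\bM_Y)$ and comparing coefficients along $E'$ gives
\[
a(E',\cF',B',\bM') = a(E,\cF,B,\bM) + (\text{a correction term that is } \ge 0 \text{ when } E' \text{ is non-}\cF'\text{-invariant, and } = 0 \text{ when } E' \text{ is } \cF'\text{-invariant}),
\]
where $E$ is the valuation on $Y$ with $\sigma'(E') \subseteq E$, which has center on $X$ equal to $\sigma(\operatorname{cent}_{X'}E')\subseteq Z$.

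With this comparison in hand, part (2) is immediate: if $a(E,\cF,B,\bM)\ge 0$ (resp. $>0$) for all $E$ exceptional over $X$ with center in $\overline Z$, then for $E'$ exceptional over $X'$ with center in $\overline{Z'}$ the corresponding $E$ is exceptional over $X$ (finiteness of $\sigma$ preserves the divisor-vs-higher-codimension dichotomy for centers, since $\sigma$ is quasi-finite), its center lies in $Z$, and the correction term is $\ge 0$, so $a(E',\cF',B',\bM')\ge 0$ (resp. $>0$). For part (1), the "only if" direction follows the same way, now also tracking that $\epsilon(E')$ relates correctly to $\epsilon(E)$: a component is $\cF'$-invariant if and only if its image is $\cF$-invariant, so $\epsilon(E')=\epsilon(E)$, and when $E'$ is non-invariant the correction term is $\ge 0$, giving $a(E',\cdots)\ge a(E,\cdots)\ge -\epsilon(E)=-\epsilon(E')$; when $E'$ is invariant, $\epsilon(E')=\epsilon(E)=0$ and the correction vanishes, so $a(E',\cdots)=a(E,\cdots)\ge 0$. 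For the "if" direction I would go the other way: given a divisor $E$ over $X$ with center in $\overline Z$ violating sub-lc, I need to produce a divisor over $X'$ with center in $\overline{Z'}$ violating sub-lc. This requires knowing that $Z'$ dominates $Z$ (which holds since $Z=\sigma(Z')$ and $\sigma$ is finite, hence the generic point of $Z'$ maps to the generic point of $Z$), so that after base-changing the model realizing $E$ we can pick a component $E'$ of $\sigma'^{-1}(E)$ lying over the generic point of $Z$, whose center on $X'$ is then contained in $\overline{Z'}$; for such $E'$ the correction term is exactly controlled by ramification indices and the equality $a(E',\cdots)\ge -\epsilon(E')$ would force $a(E,\cdots)\ge -\epsilon(E)$ by the same coefficient comparison, contradiction.

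The main obstacle, and the step deserving the most care, is the foliated ramification formula $K_{\cF_{Y'}} = \sigma'^*K_{\cF_Y} + R_\cF$ together with the sign of its interaction with $\sigma'^*B_Y$: one must verify that, for the pulled-back foliation under a finite morphism, the ramification divisor contributes with a \emph{non-negative} coefficient precisely on the non-$\cF'$-invariant components and \emph{zero} on the $\cF'$-invariant ones, so that after absorbing the $\sigma'^*B_Y$ ramification into $B'_{Y'}$ the net correction has the required sign. Concretely this is the statement that if $D'$ is a prime divisor on $Y'$ dominating a prime divisor $D$ on $Y$ with ramification index $e$, then $\mu_{D'}(K_{\cF_{Y'}} - \sigma'^*K_{\cF_Y}) = (e-1)$ if $D'$ is non-$\cF_{Y'}$-invariant and $=0$ if $D'$ is $\cF_{Y'}$-invariant (intuitively, an invariant divisor "sees" no ramification of the leaf-direction conormal), which one can check by a local computation in suitable analytic or formal coordinates adapted to $\cF$, reducing to the case $\dim X = \operatorname{rank}\cF$ plus a transverse family, exactly as in the surface case treated in \cite{McQ08} and in \cite{Dru21}. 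Once this local formula is established, everything else is bookkeeping with coefficients and with the elementary geometry of centers under a finite morphism.
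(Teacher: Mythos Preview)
Your overall strategy matches the paper's: compare discrepancies valuation-by-valuation via the foliated Riemann--Hurwitz formula of \cite[Lemma~3.4]{Dru21}. The ramification formula you state in the final paragraph is correct. However, your displayed discrepancy comparison
\[
a(E',\cF',B',\bM') = a(E,\cF,B,\bM) + (\text{correction}\ \ge 0\ \text{if non-invariant},\ = 0\ \text{if invariant})
\]
is wrong. Writing out $B_{Y'} = \sigma'^*B_Y - R_\cF$ (there is no separate ``effective ramification contribution'' from $B$: one simply has $\mu_{E'}\sigma'^*B_Y = r_{E'}\cdot\mu_E B_Y$), one obtains instead the \emph{multiplicative} relation
\[
a(E',\cF',B',\bM') + \epsilon(E') \;=\; r_{E'}\bigl(a(E,\cF,B,\bM) + \epsilon(E)\bigr),
\]
which is what the paper derives (following \cite[Proposition~5.20]{KM98}). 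In particular, for $E'$ invariant the correction is $(r_{E'}-1)\,a(E)$, not zero; for $E'$ non-invariant the correction is $(r_{E'}-1)(a(E)+1)$, which is negative once $a(E)<-1$.

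This error breaks your argument for the ``if'' direction of (1). From your claimed inequality $a(E',\ldots)\ge a(E,\ldots)$ you cannot conclude that $a(E',\ldots)\ge -\epsilon(E')$ forces $a(E,\ldots)\ge -\epsilon(E)$: the inequality points the wrong way. Only the precise multiplicative formula, together with $\epsilon(E)=\epsilon(E')$ and $r_{E'}>0$, yields the equivalence $a(E')\ge -\epsilon(E')\iff a(E)\ge -\epsilon(E)$ needed for both directions of (1). Once you replace your qualitative comparison by this exact identity, the remaining bookkeeping goes through and the argument becomes the paper's proof.
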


\begin{proof}
    Let $f: Y \to X$ be a birational morphism such that $\bM$ descends to $Y$ and $E$ be a prime divisor on $Y$ whose center on $X$ is $Z$. Let $Y'$ be the normalization of the main component of $X' \times_X Y$ and $\sigma': Y' \to Y$ and $g: Y' \to X'$ be induced morphisms. Let $E'$ be a prime divisor on $Y'$ which dominates $E$ and has center $Z'$ on $X'$.

    Let $g: Y' \to X'$ be a birational morphism and $E'$ be a $g$-exceptional divisor on $Y'$ centering on $Z'$. By \cite[Lemma 2.22]{Kol13}, there exist a birational morphism $f: Y \to X$ and a finite morphism $\sigma': Y' \to Y$ such that $E = \sigma'(E')$ is a $f$-exceptional prime divisor on $Y$ and is centered on $Z$. After replacing $Y$, $Y'$ by higher models, we may assume that $\bM$ (resp. $\sigma^*\bM$) descends to $Y$ (resp. $Y'$).
    
    In both cases above, let $ \cF_Y = f^{-1}\cF $ and $\cF_{Y'} =g^{-1}\cF' $. By \cite[Lemma 3.4]{Dru21} (see also \cite[Proposition 2.2]{CS20}),
    \[
    K_{\cF_{Y'}} = \sigma^*K_{\cF_Y}  + \sum \epsilon(\sigma(D))(r_D-1)D,
    \]
    where the sum runs over all prime divisors on $Y'$ and $r_D$ is the ramification index of $\sigma'$ along $D$. Since $\epsilon(E) = \epsilon(E')$ and $\bM'_{Y'} = \sigma'^*\bM_Y$, following the same computation of \cite[Proposition 5.20]{KM98} (see also \cite[Proposition 3.15]{HJLL24}), we have
    \[
    a(E',\cF',B',\bM') + \epsilon(E') = r_{E'} (a(E,\cF,B,\bM) + \epsilon(E)).
    \]
    Thus $a(E',\cF',B',\bM') \geq -\epsilon(E')$ (resp. $= -\epsilon (E')$) if and only if $a(E,\cF,B,\bM) \geq -\epsilon(E)$ (resp. $= -\epsilon (E)$). If $a(E,\cF,B,\bM) \geq 0$ (resp. $>0$), then $a(E',\cF',B',\bM') \geq 0$ (resp. $>0$). Note that $E$ is $f$-exceptional if and only if $E'$ is $g$-exceptional. Therefore (1) and (2) follow.
\end{proof}

\subsection{Divisorial adjunction}

We give the subadjunction formula for gfqs along prime divisors.

\begin{proposition}\label{prop:subadj,primediv}
    Let $X$ be a normal variety, $S$ be an prime divisor on $X$. Let $\nu: S^\nu \to S$ be the normalization of $S$. Let $(X,\cF,B,\bM)$ be a gfq of rank $r$ such that $S$ is not contained in $\Supp B$. Assume that $K_\cF + B$  and $ \epsilon(S)S $ are $\R$-Cartier. Then there exists a restricted foliation $\cF_S$ on $S^\nu$ with  $\rank \cF_S = r - \epsilon(S)$, an \R-divisor $B_S\geq 0$ on $S^\nu$, such that
    \[
    (K_\cF + B + \epsilon(S) S + \bM_X)|_{S^\nu} = K_{\cF_{S}} + B_S + \bM^S_{S^\nu},
    \]
    where $\bM^S = \bM|_{S^\nu}$ is the restricted b-divisor.
\end{proposition}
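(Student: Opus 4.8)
The plan is to reduce the statement to the already-understood divisorial adjunction for foliated triples (the $\bM = \overline{0}$ case, which appears in \cite{CS21,ACSS22} and is used freely in \cite{CS20}), and then simply carry the b-divisor $\bM$ along for the ride using the restricted b-divisor $\bM|_{S^\nu}$ of Definition~\ref{def:Restrictedbdiv}. First I would fix a log resolution $f : Y \to X$ of $(X,S)$ such that $\bM$ descends to $Y$, write $S_Y = f^{-1}_* S$, and let $g : S_Y \to S^\nu$ be the induced birational morphism, so that $\bM|_{S^\nu} = \overline{\bM_Y|_{S_Y}}$ by definition. Writing $\cF_Y = f^{-1}\cF$, the construction of the restricted foliation on a birational model together with ordinary foliated adjunction on $Y$ produces a restricted foliation $\cF_{S_Y}$ on $S_Y$ of rank $r - \epsilon(S)$ and an $\R$-divisor $\Theta_{S_Y}$ such that $(K_{\cF_Y} + B_Y + \epsilon(S_Y) S_Y)|_{S_Y} = K_{\cF_{S_Y}} + \Theta_{S_Y}$. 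Here I must be slightly careful: $\bM_X$ is $\R$-Cartier by the gfq hypothesis and $K_\cF + B$, $\epsilon(S)S$ are $\R$-Cartier by assumption, so all the relevant classes on $X$ can be pulled back, and $\bM_Y = f^*\bM_X$ is the descent.

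Next I would push forward to $S^\nu$. Define $\cF_S := (g)_* \cF_{S_Y}$ (equivalently, the restricted foliation of $\cF$ to $S^\nu$ obtained via any log resolution, which is independent of the resolution), and define $B_S$ by $g_* \Theta_{S_Y}$ together with the contributions of $g$-exceptional divisors computed from the discrepancy identity; concretely, $B_S$ is determined by the adjunction formula on the model $S_Y$ pushed down through $g$. The key identity to verify is that
\[
(K_\cF + B + \epsilon(S) S + \bM_X)|_{S^\nu} = K_{\cF_S} + B_S + \bM^S_{S^\nu},
\]
which follows by pulling everything back to $S_Y$: the left-hand side pulls back to $(K_{\cF_Y} + B_Y + \epsilon(S_Y)S_Y + \bM_Y)|_{S_Y}$ because $f^*$ preserves all these classes and restriction commutes with pullback, while the right-hand side pulls back to $K_{\cF_{S_Y}} + (g$-adjusted divisor$) + \bM_Y|_{S_Y}$; the two agree by foliated adjunction on $Y$, and since $g$ is birational and both sides are $g_*$ of their pullbacks, the identity descends to $S^\nu$.

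The remaining point is $B_S \geq 0$. For the divisorial part of $B_S$ this follows from the corresponding positivity in foliated adjunction for triples (coefficients of the different are $\geq 0$ when $B \geq 0$ and $S$ is not an irreducible component of $\Supp B$, using that $S$ is not contained in $\Supp B$). For the exceptional part, the contribution of a $g$-exceptional prime divisor $E' \subset S_Y$ to $B_S$ is $-a(E', \cF_S, B_S, \bM^S)$; I would relate this to the discrepancy $a(E', \cF_Y, B_Y, \bM_Y)$-type quantity on $S_Y$ — which is $\geq 0$ or $\geq -\epsilon(E')$ because $f$ is a log resolution and so $(X,\cF,B,\bM)$ has no worse than the prescribed singularities at such places — and then the nefness of $\bM_Y$ contributes non-negatively, as in Proposition~\ref{proposition:bdivoncurve} in spirit. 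I expect the main obstacle to be bookkeeping the exceptional coefficients correctly: one must check that passing to the normalization $S^\nu$ (rather than $S$ itself) and descending through $g$ does not introduce negative coefficients, which requires knowing that $S_Y \to S^\nu$ is a log resolution-type morphism and invoking the triple-case adjunction in the sharp form where the output divisor is effective. Everything involving $\bM$ is then formal, since $\bM|_{S^\nu}$ is b-nef and $\R$-Cartier and restriction is compatible with the $\R$-linear structure.
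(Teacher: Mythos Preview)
There is a genuine gap, and it stems from the sentence ``$\bM_Y = f^*\bM_X$ is the descent.'' This is false in general: that $\bM$ descends to $Y$ means $\bM = \overline{\bM_Y}$ for some $\R$-Cartier divisor $\bM_Y$ on $Y$, and the relation to $X$ is $\bM_X = f_*\bM_Y$. The negativity lemma then gives $f^*\bM_X = \bM_Y + E$ with $E \geq 0$ $f$-exceptional, and this $E$ is precisely what makes $B_S$ effective in the b-divisor contribution. Because you set $\bM_Y = f^*\bM_X$, your identity ``the left-hand side pulls back to $(K_{\cF_Y} + B_Y + \epsilon(S_Y)S_Y + \bM_Y)|_{S_Y}$'' is off by $E|_{S_Y}$, and your later argument for $B_S \geq 0$ never accounts for this term. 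The appeal to discrepancies of $g$-exceptional divisors and to Proposition~\ref{proposition:bdivoncurve} ``in spirit'' does not repair this: that proposition concerns invariant curves on $\Q$-factorial varieties and has nothing to do with the present situation.

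The paper's argument is both simpler and avoids this trap. It does \emph{not} pass to $Y$ for the foliated adjunction at all: it invokes the triple case \cite[Proposition--Definition 3.7]{CS24} directly on $X$ to obtain $\cF_S$ and an effective $B_S' \geq 0$ with $(K_\cF + B + \epsilon(S)S)|_{S^\nu} = K_{\cF_S} + B_S'$. The log resolution $f:Y\to X$ is used only for the single remaining inequality $\bM_X|_{S^\nu} \geq \bM^S_{S^\nu}$, which follows in two lines from $f^*\bM_X = \bM_Y + E$ with $E \geq 0$ by pushing $E|_{S_Y}$ forward along $g:S_Y \to S^\nu$. Then $B_S := B_S' + (\bM_X|_{S^\nu} - \bM^S_{S^\nu}) \geq 0$ and the formula holds. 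Your approach can be salvaged once you correct the relation between $\bM_Y$ and $f^*\bM_X$, but at that point it collapses to the paper's argument with an unnecessary detour through adjunction on $Y$.
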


\begin{proof}

    By \cite[Proposition-Definition 3.7]{CS24}, there exists a restricted foliation $\cF_S$ on $S^\nu$ of rank $r -\epsilon(S)$ and  an \R-divisor $B'_{S}\geq 0$ on $S^\nu$ such that 
    \[
    (K_\cF + B + \epsilon(S)S)|_{S^\nu} = K_{\cF_S} + B'_S.
    \]
    Note that \cite{CS24} only dealt with the case when $B$ is a $\Q$-divisor, but we may write $B$ as a $\R$-linear combination of $\Q$-divisors $B_i \geq 0$ such that $K_\cF + B_i$ is $\Q$-Cartier, so the same argument holds when $B$ is an $\R$-divisor. Since $\bM_X$ is $\R$-Cartier, it suffices to prove that $\bM_X|_{S^\nu} \geq \bM_{S^\nu}^S$. Let $f:Y\to X$ be a log resolution of $(X,S)$ such that $\bM$ descends to $Y$. By the negativity lemma, we may write $\bM_Y + E = f^*\bM_X$ for an $f$-exceptional $\R$-divisor $E \geq 0$. Let $S_Y $ be the strict transform of $S$ to $Y$ and $g: S_Y \to S^\nu$ be the induced morphism. Therefore,
    \begin{align*}
        \bM_{S^\nu}^S &= g_*\bM_{S_Y}^S =g_*(\bM_Y|_{S_Y}) = g_*((f^*\bM_X -E)|_{S_Y}) \\&= \bM_X|_{S^\nu}- g_*(E|_{S_Y}) \leq \bM_X|_{S^\nu}. \qedhere
    \end{align*}
\end{proof}

\subsection{Sets} Let $I\subset \R$ be a set. We say $I$ satisfies DCC (descending chain condition) if it contains no strictly decreasing infinite sequence of numbers. We say $I$ satisfies ACC (ascending chain condition) if it contains no strictly increasing infinite sequence of numbers. In both cases, we also just say that $I$ is a DCC or an ACC set.

\begin{definition}

Let $I \in [0,+\infty)$ be a set. Define 
\[
I_+ := \{0\}\cup \left\{\, \sum_{k=1}^n i_k \,\Bigg|\, i_k \in I, n \in \Z_{>0} \, \right\},
\]
It is clear that $I \subset I_+$. Note that if $I\subset [0,+\infty)$ is a DCC set, then $I_+$ is also a DCC set.

\end{definition}

\begin{lemma}\label{lemma:finiteset}
Let $I\in [0,+\infty)$ be a DCC set and let $\Omega \subset (0,+\infty)$ be a finite set. Then the set 
\[
J :=\left\{\, i\in I \,\Bigg|\, mi + \sum_{k=0}^n m_ki_k \in \Omega,\, \textrm{for some } i_k\in I,\, m\in \Z_{>0},\, n,m_k \in \Z_{\geq 0}  \,\right\}
\]
is finite.
\end{lemma}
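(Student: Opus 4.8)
The plan is to argue by contradiction: suppose $J$ is infinite. Since $J\subset I$ and $I$ is a DCC set, an infinite subset of $J$ cannot be strictly decreasing, so $J$ contains a strictly increasing sequence $i^{(1)} < i^{(2)} < \cdots$, or else $J$ has a non-zero infimum that is attained only finitely... more carefully: a DCC set in $[0,+\infty)$ has the property that every infinite subset contains a strictly increasing sequence. (Indeed, if every infinite subset were to contain a strictly decreasing sequence we would contradict DCC; a cleaner route is: the infimum $a=\inf J$ is attained or not, and in either case we can extract from $J$ a strictly increasing sequence $i^{(t)} \to a'$ for some $a' > 0$, after possibly first removing the single value $a$ if it lies in $J$.) In particular we may fix a strictly increasing sequence $\{i^{(t)}\}_{t\ge 1}$ in $J$ with $i^{(t)} \to a$ for some real number $a > 0$; note $i^{(t)} > 0$ for $t$ large, so we discard finitely many terms and assume $i^{(t)} > 0$ for all $t$.

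Next I would use the defining property of $J$: for each $t$ there are $m^{(t)}\in\Z_{>0}$, $n^{(t),}m_k^{(t)}\in\Z_{\ge 0}$, and $i_k^{(t)}\in I$ with
\[
m^{(t)} i^{(t)} + \sum_{k=0}^{n^{(t)}} m_k^{(t)} i_k^{(t)} = \omega^{(t)} \in \Omega.
\]
Since $\Omega$ is finite, after passing to a subsequence we may assume $\omega^{(t)} = \omega$ is constant. Since all terms are non-negative and $i^{(t)} \to a > 0$, the coefficient $m^{(t)}$ is bounded: $m^{(t)} \le \omega / i^{(t)} \le \omega / i^{(1)}$ eventually, so after a further subsequence $m^{(t)} = m$ is a fixed positive integer. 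Likewise $\sum_k m_k^{(t)} i_k^{(t)} = \omega - m^{(t)} i^{(t)} \le \omega$, and since $i^{(t)}$ is increasing this sum is $\le \omega - m\, i^{(1)} <\omega$; more importantly it equals $\omega - m\, i^{(t)}$, which is a \emph{strictly decreasing} sequence of non-negative reals as $t$ grows (because $i^{(t)}$ strictly increases and $m\ge 1$).

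The key remaining point is that the set $\{\sum_{k=0}^n m_k i_k \mid i_k\in I,\ n,m_k\in\Z_{\ge 0}\}$ is exactly $I_+$, which is a DCC set by the remark preceding the lemma. But we have exhibited the strictly decreasing infinite sequence $\omega - m\, i^{(t)} \in I_+$, contradicting DCC. This contradiction shows $J$ is finite. The main obstacle — really the only subtle point — is the initial extraction of a strictly increasing sequence converging to a \emph{positive} limit from the infinite set $J\subset I$: one must rule out the possibility that $0$ is a limit point of $J$, which follows because if $i^{(t)} \to 0$ then $m^{(t)} i^{(t)} \to 0$ forces, for the equality with a fixed $\omega\in\Omega\subset(0,+\infty)$ to hold, $\sum_k m_k^{(t)} i_k^{(t)} \to \omega > 0$; but then $\omega - m^{(t)}i^{(t)}$ is eventually strictly increasing and still lies in $I_+$. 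Wait — that gives an increasing, not decreasing, sequence, so this alternative does not immediately contradict DCC; hence one genuinely needs the limit to be positive, and I would secure that by replacing $J$ at the outset with $J\setminus\{$its minimum$\}$ if necessary and invoking that a DCC set has no accumulation point from above at any of its own points except possibly... — the clean fix is: since $I$ is DCC, $I$ has no strictly decreasing sequence, hence $J$ (being infinite in $[0,\infty)$) contains a strictly increasing sequence, and \emph{any} strictly increasing sequence in a DCC subset of $[0,\infty)$ is automatically bounded above only if... actually it need not be, but boundedness comes for free from $m^{(t)}i^{(t)} \le \omega$. So $i^{(t)}$ is increasing and bounded, hence converges to some $a\ge i^{(1)} > 0$, and positivity of the limit is immediate. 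This is the step to write out with care.
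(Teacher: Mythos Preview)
Your argument is correct and is essentially the paper's proof unpacked as a contradiction: the paper simply writes $i = \tfrac{1}{m}\bigl(c - \sum_k m_k i_k\bigr)$, observes that the right-hand side ranges over an ACC set (since $I_+$ is DCC), and concludes that $J \subset I \cap (\text{ACC set})$ is finite. Your extraction of a strictly increasing $i^{(t)}$ with fixed $\omega$ and $m$, producing a strictly decreasing sequence $\omega - m\,i^{(t)} \in I_+$, is precisely the verification that this set is ACC; the digression about the limit of $i^{(t)}$ is a red herring, since all you need to bound $m^{(t)}$ is $i^{(1)}>0$, which is immediate from strict increase.
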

\begin{proof}
    We may assume that $\Omega = \{c\}$ for some fixed $c>0$. For $i\in J$, we may write $mi + \sum_{k=0}^n m_ki_k = c$ for some $i_k\in I$, $ m\in \Z_{>0}$, $n,m_k \in \Z_{\geq 0}$. Then 
    \[
    i = \frac{1}{m} \left(c -\sum_{k=0}^n m_ki_k\right).
    \]
    The LHS belongs to $I$, which is a DCC set. Note that $\sum_{k=0}^n m_ki_k \in I_+$, The RHS belongs to an ACC set only depends on $I$ and $c$. Thus $i$ belongs to a finite set.
\end{proof}

\subsection{Convex geometry} We recall some terminology and theorems from convex geometry that will be used in the following proof of the cone theorems. For more details, see \cite[$\S$18]{Roc70}.

\begin{definition}[Exposed rays]
    
A convex cone $K \subset \R^n$ is said to be \emph{strongly convex} if $K \cap (-K) = \{0\}$. Let $K\subset \R^n$ be a strongly convex cone containing more than just the origin. A ray $R$ is said to be \textit{exposed} if there exists a hyperplane $H$ that meets $K$ exactly along $R$. The hyperplane $H$ is called a \textit{supporting hyperplane} of $R$. A \textit{supporting function} of $R$ is a linear functional $f\in (\R^n)^*$ such that $f(K) \geq 0$ and for $\xi\in K$, $f(\xi) = 0$ if and only if $\xi\in R$. 

\end{definition}

In particular, any exposed ray is extremal, but not vice versa. Assume that $K$ is a closed strongly convex cone containing more than just origin. Then $K$ is generated by its extremal rays, and it is the closure of the subcone generated by its exposed rays. In particular, every extremal ray of $K$ is the limit of a sequence of exposed rays of $K$ (see \cite[Theorem 18.6]{Roc70}).

\begin{lemma}\label{lemma:ExtRayPullback}
    Let $h: V \to W$ be a linear map between finite dimensional $\R$-vector spaces, and let $C \subset V$ and $D \subset W$ be two closed strongly convex cones containing more than just the origin. Assume that $h(C) \subset D$. Let $R$ be an extremal ray of $D$ contained in $h(C)$. Then there exists an extremal ray $R'$ of $C$ such that $R' = h(R)$. Moreover, let $L\in W^*$, then $R$ is $L$-trivial (resp. $L$-negative) if and only if $R'$ is $h^*L$-trivial (resp. $h^*L$-negative), where $h^* : W^* \to V^*$ is the dual map.
\end{lemma}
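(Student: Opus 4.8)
The plan is to prove the statement in two stages: first produce the extremal ray $R'$ of $C$, then verify the claims about $L$-triviality and $L$-negativity. Throughout I will use only elementary convex geometry, in the spirit of \cite[\S18]{Roc70}, together with the hypothesis $h(C) \subset D$.

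\textbf{Step 1: Constructing $R'$.} Pick a generator $w \in R \subset h(V)$, so $w = h(v)$ for some $v \in V$. Consider the preimage cone $h^{-1}(R) \cap C$; it is a closed convex subcone of $C$ (nonempty since $R \subset h(V)$ and we can arrange $v$ to be a limit of points of $C$ mapping into $R$ — more carefully, one should argue that $h^{-1}(\overline{R_{\geq 0}w}) \cap C$ is nonempty by a preimage-of-closed argument, using that $C$ is a cone). The subtlety is that a priori this preimage cone could be higher-dimensional; the point is to extract from it a single extremal ray of $C$ that still maps \emph{onto} $R$. The clean way is: choose a supporting linear functional is not available for $R$ in $W$ in general (extremal $\neq$ exposed), so instead I would argue directly. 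Let $F := h^{-1}(R_{\geq 0} w) \cap C$, a closed convex subcone of $C$. Since $C$ is strongly convex, so is $F$, and $F \neq \{0\}$ because some nonzero element of $C$ maps to $R$ (if every element of $C$ mapping into the line $\R w$ maps to $0$, then $w \notin h(C)$... but we only know $w \in h(V)$). Here lies the first genuine obstacle: I must be careful that $R \subset h(C)$, not merely $R \subset h(V)$. I would resolve this by observing that since $R$ is an extremal ray of $D$ and $h(C) \subset D$, if $R$ meets the relative interior of... Actually the cleanest fix: the hypothesis should be read together with $C$ spanning or with a compactness argument — I would take a nonzero $c \in C$ with $h(c) \in R$, which exists because $R$ is extremal in $D \supset h(C)$ and $h(C)$ generates a subcone of $D$ whose closure contains... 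This needs the additional mild input that $R \subset \overline{h(C)}$; I would either assume it or derive it. Granting $F \neq \{0\}$, pick any extremal ray $R'$ of $F$ (which exists since $F$ is closed strongly convex and nonzero, by \cite[Theorem 18.5]{Roc70}). Then $h(R') \subseteq R$, and since $R$ is one-dimensional and $h(R') \neq \{0\}$ (elements of $F \setminus \{0\}$ map to nonzero multiples of $w$ by definition of $F$... wait, $F$ contains $0$ and points mapping to $0$? No: $h^{-1}(R_{\geq 0}w)$ includes $h^{-1}(0)$), so I would instead take $R'$ extremal in $F$ and check $h(R') \neq 0$ by noting that the extremal rays of $F$ mapping to $0$ would lie in $\ker h \cap C$, and choosing $R'$ among those not killed — possible since $F \not\subset \ker h$. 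Conclude $h(R') = R$.

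\textbf{Step 2: $R'$ is extremal in $C$.} Suppose $u + u' \in R'$ with $u, u' \in C$; I must show $u, u' \in R'$. Apply $h$: $h(u) + h(u') = h(R') = R$, and since $h(u), h(u') \in h(C) \subset D$ with $R$ extremal in $D$, we get $h(u), h(u') \in R$. Hence $u, u' \in h^{-1}(R_{\geq 0}w) \cap C = F$, and since $R'$ is extremal in $F$ and $u + u' \in R'$, we conclude $u, u' \in R'$. Thus $R'$ is extremal in $C$.

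\textbf{Step 3: The $L$-triviality and $L$-negativity equivalences.} Let $L \in W^*$ and $h^*L = L \circ h \in V^*$. Pick generators $v' \in R'$ and $w = h(v') \in R$ (possible after Step~1). Then $(h^*L)(v') = L(h(v')) = L(w)$. Therefore $L(w) = 0 \iff (h^*L)(v') = 0$, which is exactly the statement that $R$ is $L$-trivial iff $R'$ is $h^*L$-trivial; and $L(w) < 0 \iff (h^*L)(v') < 0$, which is the $L$-negativity equivalence. This step is immediate once $h(R') = R$ is established.

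\textbf{Main obstacle.} The real work is entirely in Step~1, specifically in guaranteeing that $R$ lies in (the closure of) $h(C)$ and not merely in $h(V)$, so that the preimage cone $F$ is nonzero and has an extremal ray mapping onto $R$ rather than into $\ker h$. In the intended application (pullback of extremal rays of $\NE(X)$ under a pushforward or a finite/birational morphism), this holds because the relevant linear map is induced by a morphism and $h(\NE) $ is dense in $D$ or $D = \overline{h(\NE)}$; I would state the hypothesis in the form actually used, namely that $R \subseteq \overline{h(C)}$ (or that $\overline{h(C)} = D$), and everything else is routine convex geometry as sketched above.
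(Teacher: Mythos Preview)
Your approach is different from the paper's and somewhat more elaborate. The paper argues directly: take $c \in C$ with $h(c)$ a nonzero generator of $R$, write $c = \sum a_i \xi_i$ as a finite positive combination of extremal ray generators of $C$ (using that a closed pointed cone in finite dimensions is the convex hull of its extremal rays), apply $h$, and use extremality of $R$ in $D$ to conclude each $h(\xi_i) \in R$; then set $R' = \R_{\geq 0}\xi_j$ for any $j$ with $h(\xi_j) \neq 0$. This bypasses your Step~2 entirely, since $R'$ is extremal in $C$ by construction. Your route via the face $F = h^{-1}(R) \cap C$ also works, and your Step~2 argument (pulling back $u + u' \in R'$ through $h$, using extremality of $R$ in $D$, then extremality of $R'$ in $F$) is clean and correct. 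Step~3 is identical in both approaches.

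You have correctly identified the genuine issue: both the statement and the paper's proof tacitly require $R \cap h(C) \neq \{0\}$, not merely $R \subset h(V)$. The paper's proof begins ``we may assume that $R = h(R'')$ for a ray $R''$ in $V$'' and then immediately decomposes the generator of $R''$ along extremal rays of $C$, which only makes sense if $R'' \subset C$. (There are also typos in the paper: ``extremal ray of $W$'' should read ``of $D$'', ``$R' = h(R)$'' should read ``$h(R') = R$'', etc.) Your diagnosis that in the intended applications one has $\overline{h(C)} = D$ (indeed $h$ is surjective on the cones of curves) is exactly right, and is how the lemma is actually invoked throughout the paper. So your ``main obstacle'' is not a defect of your argument relative to the paper's; it is a defect of the stated hypothesis that both proofs share.
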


\begin{proof}
    Since $R$ is contained in $h(C)$, we may assume that $R = h(R'')$ for a ray $R''$ contained in $C$. Since $C$ is closed and strongly convex, it is generated by extremal rays. We may assume $R''$ is generated by $\sum a_i \xi_i$, where $a_i >0$ and $\xi_i$ generates an extremal ray of $C$ for each $i$. Then $R$ is generated by $\sum a_i h(\xi_i)$. Since $R$ is extremal, we have $R$ is generated by $h(\xi_j)$ for some $j$. Let $\xi' = \xi_j$, and we may take $R'$ to be the extremal ray of $C$ generated by $\xi'$. Note that $\langle h^*L,\xi' \rangle = \langle L,h(\xi') \rangle$ by the definition of $h^*$, the last assertion follows.
\end{proof}

In the remainder of the paper, we use Lemma~\ref{lemma:ExtRayPullback} frequently for $h: N_1(Y/S)_\R \to N_1(X/S)_\R$ and the cones of curves, induced by proper morphisms $f: X \to S$ and $g:Y\to X$. By the projection formula, the dual of $h$ is just the pullback of the divisor class $g^*: N^1(X/S)_\R \to N^1(Y/S)_\R$.

\begin{definition}[Locus]
    Let $X$ be a normal proper variety, and let $R$ be a ray in the cone of curves $\NE(X)$. We define the \emph{locus} of $R$, denoted by $\loc R$, to be the union of all curves $C$ on $X$ such that $[C]\in R$.
\end{definition}

\subsection{Bend and break} 

We recall the following foliated bend and break lemma originated from \cite{Miy87}, and stated in \cite{Spi20} as follows. 

\begin{theorem}[{\cite[Corollary 2.28]{Spi20}}]\label{thm:B&B}
    Let $X $ be a normal projective variety of dimension n,  $(X,\cF,B)$ be a foliated triple, and $N, D_1, \dots ,D_n$ be nef $\R$-divisors on $X$. Assume that 
    \begin{enumerate}
        \item[(1)] $D_1 \cdot D_2 \cdot \cdots \cdot D_n = 0$, and
        \item[(2)] $-(K_\cF + B) \cdot D_2 \cdot \cdots \cdot D_n >0$.
    \end{enumerate}
    Then through a general closed point $x$ of $X$, there is a rational curve $\xi$ tangent to $\cF$ such that $D_1 \cdot \xi = 0$ and 
    \[
    N\cdot \xi \leq 2n \frac{M\cdot D_2 \cdot \cdots D_n}{-K_\cF \cdot D_2 \cdot \cdots \cdot D_n}.
    \]
\end{theorem}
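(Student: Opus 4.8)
The plan is to recast the statement as an instance of Miyaoka's positive-characteristic bend-and-break technique (\cite{Miy87}), in the foliated form of Bogomolov--McQuillan (\cite{BM16}) and Spicer (\cite{Spi20}), with the nef divisors $D_1$ and $N$ entering only through formal intersection-theoretic bookkeeping.

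\textbf{Reductions.} First I would reduce to the case where $X$ is smooth and $D_2,\dots,D_n$ are ample. Passing to a resolution $\pi\colon\widetilde X\to X$ with $\widetilde\cF=\pi^{-1}\cF$ is harmless: nef divisors pull back to nef divisors, $\pi^*D_1\cdots\pi^*D_n=0$, a rational curve tangent to $\widetilde\cF$ pushes forward to one tangent to $\cF$ with the same intersection numbers against the $\pi^*D_i$ and $\pi^*N$, and the correction in $K_{\widetilde\cF}-\pi^*K_\cF$ is $\pi$-exceptional, hence trivial against a general complete intersection curve below; note $B\ge0$ gives $-K_\cF\cdot D_2\cdots D_n\ge-(K_\cF+B)\cdot D_2\cdots D_n>0$, so the asserted bound is meaningful. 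For nef-to-ample I would fix an auxiliary ample $A$, run the argument with each $D_i$ ($i\ge2$) replaced by $D_i+\varepsilon A$ and with $N$ replaced by $N+A$ (to keep the output curves in a bounded family), and let $\varepsilon\to0$: a subsequence limits to the required $\xi$, and since $D_1$ stays fixed, $D_1\cdot\xi\le\lim_{\varepsilon\to0}D_1\cdot(D_2+\varepsilon A)\cdots(D_n+\varepsilon A)=D_1\cdot D_2\cdots D_n=0$, so $D_1\cdot\xi=0$ by nefness.

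\textbf{Curve and Miyaoka's criterion.} Replacing the $D_i$ by large multiples, take $C=H_2\cap\dots\cap H_n$ a general complete intersection curve with $H_i\in|mD_i|$ very ample: then $C$ is smooth irreducible, disjoint from $\Sing\cF$, with $[C]$ proportional to $D_2\cdots D_n$, $D_1\cdot C=0$ and $-K_\cF\cdot C>0$, i.e. $\deg(\cF|_C)>0$. By the foliated Miyaoka criterion, this positivity of $\cF|_C$ forces the leaf through a general point of $C$ to be algebraic with rationally connected closure; as $C$ sweeps out $X$, one obtains through a general point of $X$ a rational curve tangent to $\cF$ lying in a covering family. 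The substance here is the reduction modulo a large prime: replacing $C\hookrightarrow X$ by its composite with a high Frobenius power of $C$ and using $p$-closedness of $\cF$ to deform the resulting maps while preserving tangency to $\cF$. Imposing passage through two general points and breaking the ensuing one-parameter family of maps $\bP^1\to X$ yields a rational curve $\xi$ through a general point, tangent to $\cF$, appearing as a homological summand of a multiple $q[C]$ of $[C]$ modulo an effective $1$-cycle. Since $D_1$ is nef and $D_1\cdot C=0$, this gives $0=q(D_1\cdot C)=D_1\cdot\xi+D_1\cdot(\text{eff})\ge D_1\cdot\xi\ge0$, so $D_1\cdot\xi=0$; and running the two-point degeneration efficiently, exactly as in classical Mori bend-and-break with a nef divisor, yields $N\cdot\xi\le 2\dim X\cdot\frac{N\cdot C}{-K_\cF\cdot C}=2n\cdot\frac{N\cdot D_2\cdots D_n}{-K_\cF\cdot D_2\cdots D_n}$, the $2n$ reflecting two basepoints each imposing $\dim X$ conditions. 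Undoing the reductions ($\pi_*$ and the $\varepsilon\to0$ limit) would finish the proof.

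\textbf{Main obstacle.} The only genuinely non-formal input is producing rational curves \emph{tangent to $\cF$}: this is the heart of Miyaoka's method and of \cite{Spi20}, resting entirely on positive-characteristic geometry ($p$-closed foliations, Frobenius pullbacks, and the gain in infinitesimal deformations for a foliation that is positive along a general complete intersection curve). Everything else is bookkeeping; the one mild subtlety is to arrange the nef-to-ample limit so that the exact equality $D_1\cdot\xi=0$ -- and not merely ``$D_1\cdot\xi$ small'' -- survives, which is why one perturbs only $D_2,\dots,D_n$ and keeps $D_1$ fixed.
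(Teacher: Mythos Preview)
The paper does not give its own proof of this statement: it is quoted verbatim as \cite[Corollary~2.28]{Spi20} and used as a black box throughout (see the sentence ``We recall the following foliated bend and break lemma originated from \cite{Miy87}, and stated in \cite{Spi20} as follows''). So there is nothing in the paper to compare your proposal against.

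That said, your sketch is a faithful outline of the argument in the cited sources: Miyaoka's positive-characteristic bend-and-break, with the foliated refinement of Bogomolov--McQuillan that forces the produced curves to be tangent to $\cF$, and the standard intersection-theoretic bookkeeping (perturb $D_2,\dots,D_n$ to ample, take a general complete intersection curve, run Frobenius + two-point bend-and-break, then pass to the limit). Two minor remarks: (i) the paper's displayed bound has a typo --- the $M$ in the numerator should be $N$, as you correctly wrote; (ii) in your resolution step you need $K_\cF$ to be $\R$-Cartier for the pullback and the denominator $-K_\cF\cdot D_2\cdots D_n$ to make sense, which is not part of the definition of a foliated triple (only $K_\cF+B$ is assumed $\R$-Cartier) --- in \cite{Spi20} this is handled by working with $(X,\cF,B)$ rather than $(X,\cF)$ on the resolution, or by assuming $\Q$-factoriality. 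This is a bookkeeping issue rather than a gap in the strategy.
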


\subsection{Singularities in the sense of McQuillan}

For foliations of rank $1$, we need a slightly different definition of foliation singularities, originated from \cite{McQ04}, \cite{BM16}. We recall the definitions and notations introduced in \cite{CS24}.

\begin{definition}
    Let $X$ be a normal variety, and let $\cF$ be a foliation of rank 1 on $X$ such that $K_\cF$ is $\Q$-Cartier. Let $x\in X$ be a point. We may take an open neighborhood $U$ of $x$, and an index 1 cover $\sigma:U'\to U$ associate to $K_\cF$ such that $\sigma^{-1}\cF$ is generated by a vector field $\partial$. We say $\cF$ is \textit{singular in the sense of McQuillan} at $x \in X$, if there exists an embedding $U'\to M$ to a smooth variety $M$, such that there exists a lift $\Tilde{\partial}$ of $\partial$ to a vector field of $M$ such that $\Tilde{\partial}$ vanishes along $\sigma^{-1}(x)$. We denote by $\Sing^+\cF$ the locus of points $x\in X$ such that $\cF$ is singular in the sense of McQuillan.
\end{definition}

In general, we have $\Sing \cF \subset \Sing^+\cF$ by \cite[Lemma 4.1]{CS24}, provided $K_\cF$ is $\Q$-Cartier. The equality holds if $X$ admits klt singularities, see \cite[Proposition 2.32]{CS20}.

\subsection{Simple singularities}
We recall the definition of simple singularities for foliation of rank 1 on threefolds following \cite{MP13} and \cite{CS20}, which plays an important role in the foliated minimal model program of rank 1 on threefolds. 

\begin{example}[{\cite[Example III.iii.3]{MP13}}]\label{ex:simple}
    Let $Y$ be the quotient of $\C^3$ by the $\Z/2\Z$-action $(x, y, z)\mapsto (y,x,-z)$. We consider the vector field $\partial$ on $\C^3$ given by 
    \[
    \partial = \left( x \partial_x - y \partial_y\right) + \left(a(xy,z)x\partial_x - a(xy,-z)y\partial_y + c(xy,z)\partial_z  \right),
    \]
    where $a$, $c$ are formal functions in two variables and $c$ is not a unit and
    satisfies $c(xy,z) = c(xy,-z)$. Note that $\partial \mapsto -\partial $ under the group action. Therefore $\partial$ defines a foliation $\cG$ on $Y$ with an isolated singularity. Moreover, $(Y,\cG)$ is canonical. 
\end{example}

\begin{definition}[cf. {\cite[Definition 2.24]{CS20}}]
    Let $X$ be a normal threefold and let $\cF$ be a rank 1 foliation on $X$ with canonical singularities. We say that $\cF$ admits a \textit{simple singularity} at $P\in X$ if either
    \begin{enumerate}
        \item[(1)] $\cF$ is terminal at $P$ and no component of $\Sing X$ through $P$ is $\cF$-invariant; or
        \item[(2)] $(X,\cF)$ is formally isomorphic to $(Y,\cG)$ defined in Example~\ref{ex:simple} at $P$; or
        \item[(3)] $X$ is smooth at $P$.
    \end{enumerate}
    Moreover, we say that $\cF$ has simple singularities if $\cF$ admits a simple singularities at every $P\in X$. 
\end{definition}

\begin{theorem}\label{thm:simpleprop}
    Let $X$ be a normal threefold and let $\cF$ be a foliation of rank $1$ on $X$. Then the following hold:
    \begin{enumerate}
        \item[(1)] There exists a sequence of weighted blow-ups in foliation invariant centres $p: X' \to X$ such that $p^{-1}\cF$ has simple singularities.
        \item[(2)] If $\cF$ has simple singularities, then $X$ has cyclic quotient singularities. In particular, $X$ is $\Q$-factorial klt. 
    \end{enumerate}
\end{theorem}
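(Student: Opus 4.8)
The plan is to derive both statements from the resolution theory for one-dimensional (rank $1$) foliations on threefolds due to McQuillan and Panazzolo \cite{MP13}, together with its reformulation in \cite{CS20}; no essentially new idea is needed beyond carefully matching up definitions. For \textbf{part (1)} I would proceed in two stages. First, apply the reduction of singularities for rank $1$ foliations on threefolds \cite{MP13} (see also \cite[\S 2]{CS20}): there is a finite sequence of weighted blow-ups after which the transformed foliation is canonical and every singular point lies in a short explicit list. Each blow-up in the McQuillan--Panazzolo algorithm is centred along the locus where the singularity is worst, which for a rank $1$ foliation is contained in $\Sing \cF$ and hence is $\cF$-invariant (at a point where a local generating vector field vanishes one has $\partial(I_P)\subset I_P$, so the point is $\cF$-invariant, and likewise for the invariant curves that occur), so all centres are $\cF$-invariant as required. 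Second, among the canonical points so produced, those not already simple in the sense of the definition above can be further improved by a weighted blow-up at the (invariant) point, and iterating -- again using the termination statement of \cite{MP13} -- one ends with a foliation all of whose singularities are simple. Composing all the blow-ups gives the desired $p\colon X'\to X$.

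For \textbf{part (2)} I would argue by cases on the type of the simple singularity at a point $P\in X$. If $X$ is smooth at $P$ there is nothing to prove. If $(X,\cF)$ is formally isomorphic at $P$ to $(Y,\cG)$ of Example~\ref{ex:simple}, then $Y=\C^3/(\Z/2\Z)$ by construction, so $X$ has a cyclic quotient singularity at $P$. In the remaining case $\cF$ is terminal at $P$; then, by the local description of terminal rank $1$ foliation singularities (equivalently $P\notin\Sing^{+}\cF$; see \cite[Lemma 2.12]{CS20}, with roots in \cite{McQ04, BM16}), analytically near $P$ there is a quasi-\'etale finite cover $q\colon V\to U$ of a neighbourhood $U$ of $P$ with $V$ smooth and $q^{-1}(\cF|_U)$ induced by a submersion, so $U=V/G$ for a finite group $G$ acting freely in codimension one and preserving the foliation. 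That such a $G$ is cyclic is again part of the McQuillan--Panazzolo package (the ambient germ of a simple singularity is the quotient of a smooth germ by a cyclic group arising from the blow-up weights), so $X$ has a cyclic quotient singularity at $P$ here as well. Finally, a normal variety with only quotient singularities is $\Q$-factorial and klt (quotient singularities are klt by \cite{KM98}, and $\Q$-factoriality of such germs is standard), which gives the last assertion.

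The step I expect to be the genuine obstacle is not any single computation but the precise identification of singularity classes: confirming that the ``final models'' of the McQuillan--Panazzolo resolution are exactly the simple singularities as defined here, and that the ambient spaces of terminal and of Example~\ref{ex:simple}-type simple singularities are cyclic quotient singularities. All the depth in both parts is imported from \cite{MP13}; everything else is a routine unwinding of discrepancies, $\cF$-invariance, and weighted blow-ups.
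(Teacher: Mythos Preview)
Your proposal is correct and aligns with the paper's approach: the paper's proof is simply ``See \cite[III.iii.4]{MP13} and \cite[Lemma 2.26]{CS20}'', so both parts are deferred entirely to those references, exactly as you anticipated. You have in fact written considerably more than the paper does, sketching the content of those citations rather than merely invoking them; your self-identified weak point---verifying that the local group in the terminal case is cyclic and that the McQuillan--Panazzolo output matches the definition of simple singularities---is precisely what is packaged inside \cite[Lemma 2.26]{CS20}, and the paper does not unpack it either.
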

\begin{proof}
    See \cite[III.iii.4]{MP13} and \cite[Lemma 2.26]{CS20}.
\end{proof}

\section{Normal surfaces}\label{section:normalsurfaces} In this section we recall  Mumford's intersection theory on normal surfaces, introduce the notion of surface numerical generalized foliated quadruple, and prove the cone theorem for surface numerical generalized foliated quadruples. In some of the following proofs, we freely use the MMP for foliated surfaces, following \cite{Bru02,McQ08,Bru15}.

\begin{definition}[{\cite[II.b]{Mum61}, cf. \cite{Sak84}}]

Let $S$ be a normal surface, and $f: S' \to S$ be a birational morphism from a $\Q$-factorial normal surface $S'$. Let $\bigcup E_i$ be the exceptional locus of $f$. For any $\R$-Weil divisor $D$ on $S$, we define the \emph{pullback} $f^*D$ to be 
\[
f^*D := f^{-1}_* D + \sum \alpha_i E_i,
\]
where $ f^{-1}_* D$ is the strict transform of $D$ along $f$ and $\alpha_i$ are uniquely defined by 
$$\left(f^{-1}_*D + \sum \alpha_j E_j\right)\cdot E_i = 0 \quad \text{for all } i.$$
For $\R$-Weil divisors $D_1$ and $D_2$ on $S$, we define their intersection number to be 
\[
D_1 \cdot D_2 := (f^* D_1)\cdot(f^*D_2). 
\] 
If $D_1$ is $\R$-Cartier, the definition coincide with the classical case. The definition is independent of the choice of the model $S'$.

\end{definition}

\begin{definition}[Cone of curves]

For $\R$-Weil divisors $D_1$ and $D_2$ on a normal surface $S$, we say that $D_1$ and $D_2$ are \emph{numerically equivalent in the sense of Mumford} if $D_1 \cdot C = D_2 \cdot C$ for every curve $C$ on $S$.  
We denote by $N_1^M(S)_{\mathbb{R}}$ the space of numerical equivalence classes of $\mathbb{R}$-Weil divisors on $S$ in the sense of Mumford.  
If $S$ is $\mathbb{Q}$-factorial, then $N_1^M(S)_{\mathbb{R}}$ coincides with $N_1(S)_{\mathbb{R}}$.  
Note that for any birational morphism $f \colon S' \to S$ from a $\mathbb{Q}$-factorial normal surface $S'$, there is a surjective linear map $N_1(S')_{\mathbb{R}} \to N_1^M(S)_{\mathbb{R}}$ induced by the strict transform.  
In particular, $N_1^M(S)_{\mathbb{R}}$ is a finite-dimensional $\mathbb{R}$-vector space. The Mumford's intersection theory gives a non-degenerate bilinear form on $N_1^M(S)_\R$. 
We denote by $\NE^M(S)$ the closed cone in $N_1^M(S)_{\mathbb{R}}$ generated by effective Weil divisors.

\end{definition}

\begin{definition}
    For an $\R$-Weil divisor $D$ on a normal surface $S$, we say $D$ is \textit{nef} (resp. \textit{strictly nef}) if $D \cdot C \geq 0$ (\(D \cdot C > 0\)) for any irreducible curve $C$ on $S$, and we say $D$ is \textit{numerically ample} if it is strictly nef and $D^2 >0$.
\end{definition}

\begin{lemma}\label{lemma:bnefonsurface}
    Let $S$ be a normal surface, and let $f:S'\to S$ be a birational morphism from a normal surface $S'$ to $S$. 
    \begin{enumerate}
        \item[(1)] Let $\{E_i\}$ be all the irreducible components of the exceptional set of $f$. Then the intersection matrix $(E_i \cdot E_j)$ is negative definite.
        \item[(2)] If $D'$ is a nef (resp. strictly nef, numerically ample) $\R$-divisor on $S'$, then $D = f_*D'$ is nef (resp. strictly nef, numerically ample). In particular, let $\bM$ be a b-nef b-\R-Cartier divisor on a normal surface $S$, then $\bM_S$ is nef as an $\R$-Weil divisor.
    \end{enumerate}

\end{lemma}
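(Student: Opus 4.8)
\textbf{Proof plan for Lemma~\ref{lemma:bnefonsurface}.}

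For part (1), the plan is to invoke the classical negative-definiteness of the intersection form on the exceptional locus of a birational morphism between normal surfaces. After passing to a common $\Q$-factorial resolution one reduces to the smooth case, where this is a standard consequence of the Hodge index theorem applied to the fibers of $f$: since the $E_i$ are contracted to a point (or to finitely many points), they span a negative definite sublattice of $\operatorname{Pic}$ of the resolution. Alternatively, one can simply cite \cite[Lemma 3.39]{KM98} or the analogous statement in \cite{Sak84}; the content here is really just a recollection, so I would keep this step to a line or two. One subtlety: $S'$ is only assumed normal, not $\Q$-factorial, so one may need to pass to a $\Q$-factorial resolution $S'' \to S'$ first and use that the pullbacks of the $E_i$ to $S''$ still span a negative definite system together with the extra exceptional curves of $S'' \to S'$; the restriction of a negative definite form to a subspace is negative definite.

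For part (2), the key is the definition of the Mumford pullback and intersection product. Given an irreducible curve $C$ on $S$, I want to compute $D \cdot C = (f_* D') \cdot C$. Pick a common $\Q$-factorial model dominating both $S'$ and $S$ — or, since $D'$ is already $\R$-Cartier on $S'$, it suffices to work on a $\Q$-factorialization $S''$ of $S'$ and on $S''$ itself if $S'$ is already good enough. The cleanest route: let $g \colon S'' \to S$ be a $\Q$-factorial normal surface mapping to both $S$ and (birationally) to $S'$ via $h \colon S'' \to S'$, with $D'$ pulled back to $h^* D'$ on $S''$. Let $\widetilde{C}$ be the strict transform of $C$ on $S''$. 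By definition of the Mumford intersection product, $D \cdot C = g^* D \cdot g^* C$, and one checks $g^* D = h^* D' + (\text{$g$-exceptional stuff})$ using that $D = g_*(h^* D')$ and that $g^*$ of a divisor is the unique extension orthogonal to all $g$-exceptional curves. Then $g^* D \cdot \widetilde{C} = h^* D' \cdot \widetilde{C} + (\text{exceptional}) \cdot \widetilde{C}$; the exceptional correction term, when intersected with the curve $g^* C$ (not $\widetilde C$), contributes in a controlled way. Actually the slick argument is: $g^* D \cdot g^* C \geq g^* D \cdot \widetilde{C}$ is not quite what I want either — rather one should observe that $h^* D'$ is nef on $S''$ (pullback of nef is nef), that $g^* D - h^* D'$ is $g$-exceptional, and that $g^* C \cdot (\text{$g$-exceptional divisor}) $ can be analyzed via part (1): writing $g^* C = \widetilde C + \sum a_i F_i$ with $F_i$ the $g$-exceptional curves and $a_i$ determined by orthogonality, the key point is $\widetilde C \cdot F_i \geq 0$ and the negative definiteness forces the sign of the $a_i$ correctly so that $(g^*D - h^*D') \cdot g^* C$ works out. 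The honest and shortest path is: $D \cdot C = g^*D \cdot g^* C = g^* D \cdot (\widetilde C + \sum a_i F_i)$; since $g^* D$ is orthogonal to each $F_i$ by definition, this equals $g^*D \cdot \widetilde C = (h^* D' + E) \cdot \widetilde C$ where $E$ is $g$-exceptional; now $h^* D' \cdot \widetilde C \geq 0$ since $h^* D'$ is nef and $\widetilde C$ is an effective curve, and $E \cdot \widetilde C$ — hmm, this still needs $E \cdot \widetilde C \geq 0$. The resolution: $E = g^* D - h^* D'$ is $g$-exceptional, and $g^* D \cdot F_i = 0$, so $E \cdot F_i = -h^* D' \cdot F_i \leq 0$ for each exceptional $F_i$; by part (1) and the negativity lemma (a $g$-exceptional divisor with $E \cdot F_i \leq 0$ for all $i$ is effective), $E \geq 0$, hence $E \cdot \widetilde C \geq 0$ since $\widetilde C$ is not $g$-exceptional and thus meets $E$ nonnegatively. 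This gives $D \cdot C \geq 0$. The strict and ample versions follow the same lines, using strict positivity of $h^* D' \cdot \widetilde C$ in the strictly nef case and, for ampleness, additionally $D^2 = (g^* D)^2 = (h^* D' + E)^2 = (h^*D')^2 + 2 h^*D' \cdot E + E^2$; one needs $(h^* D')^2 = (D')^2 > 0$ and controls the cross and $E^2$ terms — actually $h^* D' \cdot E = h^* D' \cdot g^* D - (h^*D')^2 = D \cdot f_* D' - (D')^2$, so one instead argues $D^2 = g^* D \cdot g^* D = g^* D \cdot (h^* D' + E) = g^* D \cdot h^* D'$ (orthogonality kills $g^*D \cdot$ exceptional, but $E$ is not a single $F_i$; still $g^* D \cdot E = 0$ since $E$ is $g$-exceptional and $g^* D$ is orthogonal to all $g$-exceptional divisors), $= (h^* D' + E) \cdot h^* D' = (D')^2 + E \cdot h^* D'$, and $E \cdot h^* D' \geq 0$ since $E \geq 0$ and $h^* D'$ is nef, so $D^2 \geq (D')^2 > 0$.

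The final sentence of (2) — that $\bM_S$ is nef as a Weil divisor when $\bM$ is b-nef b-$\R$-Cartier — is then immediate: by definition $\bM$ descends to some $f \colon S' \to S$ with $\bM_{S'}$ nef and $\R$-Cartier, and $\bM_S = f_* \bM_{S'}$, so the nef part of (2) applies directly.

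\textbf{Main obstacle.} The only real work is bookkeeping the $g$-exceptional correction term $E = g^* D - h^* D'$ in part (2): one must verify $E \geq 0$ (via the negativity lemma, which is exactly what part (1) is set up to feed) and then that $E$ meets the non-exceptional strict transform $\widetilde C$ nonnegatively, and in the ample case track the analogous correction in $D^2$. None of this is deep, but it requires being careful that $S'$ is merely normal (forcing a detour through a $\Q$-factorialization) and that all intersection numbers are interpreted in Mumford's sense throughout. I would present part (1) as a citation and spend the bulk of the proof on the $E \geq 0$ argument in part (2).
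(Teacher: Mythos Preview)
Your proposal is correct and takes essentially the same approach as the paper: both show that the exceptional correction $f^*D - D'$ is effective via the negative definiteness from part~(1), then compute $D \cdot C$ and $D^2$ by intersecting with the strict transform of $C$. The paper works directly on $S'$ without your detour through a $\Q$-factorial $S''$ (writing $f^*D = D' + E^+ - E^-$ and squeezing $E^- = 0$ from $(E^+ - E^-)\cdot E^- \le 0$ and $(E^-)^2 \le 0$), which is slightly slicker but amounts to the same negativity-lemma argument you outline; your extra caution about $\Q$-factoriality is harmless and arguably more careful.
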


\begin{proof}
    (1) is the normal surface version of Grauert’s contraction criterion theorem \cite[Theorem 1.2]{Sak84}. For (2), assume that $D'$ is nef on $S'$. We may write 
    \[
    D' + E^+ - E^- = f^*D,
    \]
    where $E^+$, $E^- \geq 0$ are $f$-exceptional divisors with no common components. Since $(f^*D)\cdot E_i = 0$ for each $i$ and $D'$ is  nef, $(E^+ -E^-) \cdot E_i \leq 0$ for each $i$. In particular, $(E^+ - E^-) \cdot E^- \leq 0$. On the other hand, by (1) we have 
    \[
    0 \geq (E^+ - E^-) \cdot E^- = E^+ \cdot E^- - (E^-)^2 \geq 0.
    \]
    Hence the equality holds which implies $E^- = 0$. For any irreducible curve $C$ on $S$, let $C' = f^{-1}_*C$. Since $C'$ is not exceptional, $E^+ \cdot C'\geq 0$, then we have
    \begin{align*} 
        D\cdot C =  (f^* D ) \cdot C' = D' \cdot C' + E^+ \cdot C' \geq D' \cdot C' \geq  0.
    \end{align*}
    Therefore $D$ is nef. If $D'$ is strictly nef, then $D\cdot C \geq  D'\cdot C' >0$. If $D'$ is numerically ample, then 
    \begin{align*}
        D^2 = D' \cdot f^*D = D'\cdot (D' + E^+) = D'^2 + D'\cdot E^+ \geq D'^2 >0. 
    \end{align*}
    We may assume that $\bM$ descends to $S'$ and we may conclude.
\end{proof}

\begin{definition}[Surface num-gfqs]\label{def:NumGFQ}
    A \textit{surface numerical generalized foliated sub-quadruple} (\textit{surface sub-num-gfq} for short) $(S,\cF,B,\bM)$ consists of a normal surface $S$, a foliation $\cF$ of rank $1$, an $\R$-divisor $B$, and an b-nef b-divisor $\bM$ on $S$. A surface sub-num-gfq $(S,\cF,B,\bM)$ is called a \textit{surface num-gfq} if $B\geq 0$.

    Let $(S,\cF,B,\bM)$ be an surface num-gfq. For any prime divisor $E$ over $S$, let $f: S'\to S$ be a resolution of $S$ such that $E$ is a divisor on $S'$. Let $\cF_{S'} = f^{-1}\cF$, and we may write
    \[
    K_{\cF_{S'}} + B_{S'} + \bM_{S'} = f^*(K_\cF + B + \bM_S),
    \]
    where the pullback is defined in the sense of Mumford. We define the \textit{discrepancy} of $E$ with respect to $(S,\cF,B,\bM)$  to be  \[a_{\text{num}}(E,\cF,B,\bM) := -\mu_E B_{S'}.\] By \cite[Definition 6.4.9]{CHLX23}, this definition is independent of the choice of the resolution and if $(S,\cF,B,\bM)$ is a sub-gfq, then $a_{\text{num}}(E,\cF,B,\bM) = a(E,\cF,B,\bM)$. We say a surface num-gfq $(S,\cF,B,\bM)$ is \textit{log canonical} (\textit{lc} for short) if $a
    _{\text{num}}(E,\cF,B,\bM) \geq -\epsilon(E)$ for any prime divisor $E$ over $X$.
\end{definition}

\begin{lemma}\label{lemma:surfacemodel}
Let $S$ be a normal surface and let $\cF$ be a foliation of rank $1$ on $S$. Then there exists a birational morphism $g: S'\to S$ such that we may write 
\[
K_{\cF'}+ \Delta' = g^* K_\cF, 
\]
where the pullback is defined in the sense of Mumford, $\cF'=g^{-1}\cF$ such that $(S',\cF')$ is canonical and $\Delta' \geq 0$ is a $\Q$-divisor exceptional over $S$. Moreover, $S'$ has cyclic quotient singularities.
\end{lemma}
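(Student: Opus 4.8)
The plan is to obtain $g$ by resolving and then running a relative MMP for foliated surfaces, and to read off $\Delta'\ge 0$ from a negativity lemma. First I would choose a birational morphism $h\colon \tilde S\to S$ with $\tilde S$ smooth and $\tilde\cF:=h^{-1}\cF$ having only reduced (Seidenberg) singularities; such $h$ exists by combining resolution of surface singularities with reduction of singularities of foliations on smooth surfaces \cite{Bru15}. Writing $K_{\tilde\cF}+\tilde\Delta = h^*K_\cF$ with the pullback in the sense of Mumford, the coefficients of $h^*K_\cF$ solve a linear system whose matrix is negative definite by Lemma~\ref{lemma:bnefonsurface}(1), so $\tilde\Delta$ is an $h$-exceptional $\Q$-divisor (not yet known to be effective), and $K_{\tilde\cF}$ is Cartier since $\tilde S$ is smooth.

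Next I would run a $K_{\tilde\cF}$-MMP over $S$, using the foliated surface MMP of \cite{Bru02,McQ08,Bru15}. Each extremal contraction occurring in this process contracts an $h$-exceptional curve, so the process is relative over $S$ and terminates, since the number of exceptional divisors drops at each step. By the classification of foliation singularities on surfaces and of the birational contractions occurring in the foliated surface MMP (compare the threefold statement in Theorem~\ref{thm:simpleprop}(2)), the resulting birational morphism $g\colon S'\to S$ has the property that $\cF':=g^{-1}\cF$ is canonical, $S'$ has at worst cyclic quotient singularities, and $K_{\cF'}$ is $g$-nef.

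Finally, set $\Delta':=g^*K_\cF - K_{\cF'}$ with the pullback in the sense of Mumford; since $S'$ has quotient, hence $\Q$-factorial, singularities, $\Delta'$ is a $\Q$-divisor, it is $g$-exceptional, and it satisfies $K_{\cF'}+\Delta'=g^*K_\cF$. Because $g^*K_\cF$ is $g$-numerically trivial and $K_{\cF'}$ is $g$-nef, the divisor $-\Delta'$ is $g$-nef. As the intersection matrix of the exceptional locus of $g$ is negative definite by Lemma~\ref{lemma:bnefonsurface}(1), the negativity lemma in Mumford's intersection theory applies and yields $\Delta'\ge 0$. This gives all the assertions of the lemma.

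The step I expect to be the main obstacle is the second one: one must appeal to the precise classification of (log) canonical foliation singularities on surfaces and of the elementary birational contractions of the foliated surface MMP in order to guarantee simultaneously that $\cF'$ is canonical and that $S'$ has only cyclic quotient singularities. Once that input is granted, the rest is a routine resolution-and-negativity-lemma argument, transported to Mumford's intersection theory on normal surfaces.
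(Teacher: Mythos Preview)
Your proposal is correct and follows essentially the same strategy as the paper: resolve to reduced singularities, run a relative $K_{\tilde\cF}$-MMP over $S$, and conclude $\Delta'\ge 0$ via negativity (the paper invokes Lemma~\ref{lemma:bnefonsurface} for this last step, exactly as you do). The concern you flag as the ``main obstacle'' is handled in the paper by citing \cite[Corollary~2.26]{Spi20}, which packages the foliated surface MMP together with the fact that its output has at worst cyclic quotient singularities; canonicity of $(S',\cF')$ is argued simply by noting that reduced singularities are canonical and discrepancies do not decrease under the MMP, so no finer classification is needed there.
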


\begin{proof}
    Take a resolution $f: X \to S$ of $S$ such that $\mathcal{G} = f^{-1}\cF$ has reduced singularities. We may write
    $
    K_{\mathcal{G}} + \Delta = f^*K_\cF.
    $
    By \cite[Corollary 2.26]{Spi20}, we may run a $K_{\mathcal{G}}$-MMP over $S$, and assume the MMP terminates on $S'$. Let $g:S' \to S$ be the induced morphism and $\alpha:X\to S'$ be the composition of each step of the MMP. Let $\cF' = g^{-1}\cF$ and  $\Delta' = \alpha_*\Delta$, then we have $K_{\cF'} + \Delta' = g^*K_\cF$. Note that $\Delta' = -K_{\cF'} + g^*K_\cF$ is exceptional over $S$ and anti-nef over $S$, we have $\Delta' \geq 0$ by Lemma~\ref{lemma:bnefonsurface}. Moreover, $(S',\cF')$ is canonical as $(X,\cG)$, with reduced singularities, is canonical.
\end{proof}

\begin{proposition}\label{prop:surfnum}
    Let $S$ be a normal projective surface and $(S,\cF,B,\bM)$ be a surface num-gfq. Let $C \subset S$ be a irreducible curve not contained in $\Supp B$. Then:
    \begin{enumerate}
        \item[(1)] If $C$ is not $\cF$-invariant, then $(K_\cF + B + C + \bM_S)\cdot C \geq 0 $.
        \item[(2)] If $C$ is $\cF$-invariant, and $(K_\cF + B + \bM_S) \cdot C <0 $, then $C$ is a rational curve, and $0 <-(K_\cF + B + \bM_S)\cdot C \leq 2$.
    \end{enumerate}
\end{proposition}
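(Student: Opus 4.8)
The plan is to reduce to the foliated‑triple case and invoke adjunction on the normalization of the relevant surface model. First I would handle the bookkeeping of the b‑divisor part: by Lemma~\ref{lemma:bnefonsurface}(2), $\bM_S$ is a nef $\R$‑Weil divisor, so $\bM_S \cdot C \geq 0$ for every irreducible curve $C$. Hence in (1) it suffices to show $(K_\cF + B + C)\cdot C \geq 0$, and in (2), from $(K_\cF + B + \bM_S)\cdot C < 0$ we get $(K_\cF + B)\cdot C < 0$, so after noting $(K_\cF+B+\bM_S)\cdot C \geq (K_\cF + B)\cdot C$ it is enough to bound $-(K_\cF+B)\cdot C$ from above by $2$ — and I'll want the $\bM$ term, being nef, to only help, not hurt, the upper bound. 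Care is needed here because $C$ may pass through points where things are not lc, but the statement only asks for a numerical inequality along $C$, so this should be fine once I set up adjunction on a model where $C$ is a nice curve.

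Next, for part (1): since $C$ is not $\cF$‑invariant, I would apply the divisorial adjunction (Proposition~\ref{prop:subadj,primediv}, or rather its surface incarnation via \cite{CS24}) along the normalization $\nu\colon C^\nu \to C$. Because $C$ is not contained in $\Supp B$ and $\epsilon(C) = 1$, adjunction yields
\[
(K_\cF + B + C + \bM_S)|_{C^\nu} = K_{\cF_C} + B_C + \bM^C_{C^\nu},
\]
with $B_C \geq 0$ and $\bM^C$ a nef b‑divisor on the curve $C^\nu$, hence of non‑negative degree; and $\cF_C$ is a rank‑$0$ foliation on a curve, so $K_{\cF_C} = 0$. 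Taking degrees gives $(K_\cF + B + C + \bM_S)\cdot C = \deg B_C + \deg \bM^C_{C^\nu} \geq 0$. (One subtlety: the intersection numbers and restrictions here are in Mumford's sense, so I should first pass to a $\Q$‑factorial model $f\colon S' \to S$ where $C' = f^{-1}_*C$ is Cartier and verify the adjunction coefficients pull back correctly — this is where I'd cite \cite[Definition 6.4.9]{CHLX23} and the compatibility of restricted b‑divisors.)

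For part (2): here $C$ is $\cF$‑invariant, so $\epsilon(C) = 0$ and I cannot directly apply divisorial adjunction to $K_\cF + B + C$. Instead the strategy is to use the foliated MMP for surfaces and foliated bend‑and‑break. Concretely, take the model $g\colon S'' \to S$ from Lemma~\ref{lemma:surfacemodel} (or a further resolution) so that $(S'',\cF'')$ has canonical — indeed reduced — singularities, let $C''$ be the strict transform of $C$, and transport the negativity: $(K_{\cF''} + B'' + \bM_{S''})\cdot C'' \leq (K_\cF + B + \bM_S)\cdot C < 0$ after absorbing the effective exceptional correction $\Delta''$ from Lemma~\ref{lemma:surfacemodel} and the nef b‑divisor correction. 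On such a canonical foliated surface, an $\cF''$‑invariant curve with $K_{\cF''}\cdot C'' < 0$ must be rational — this is classical in the McQuillan–Brunella theory (it is, essentially, the structure of negative‑degree invariant curves / the foliated cone theorem for surfaces in \cite{McQ08,Bru15}). To get the bound $\leq 2$, I would apply Theorem~\ref{thm:B&B} (foliated bend‑and‑break) in dimension $n = 1$ — or rather its surface version — with an appropriate choice of nef divisors, producing through a general point of a suitable component an $\cF$‑tangent rational curve of small degree; the constant $2n = 2$ is exactly the expected bound in dimension $1$. Alternatively, and perhaps more cleanly, I can run a $(K_{\cF''} + B'' + \bM_{S''})$‑MMP for the surface num‑gfq, which contracts $C''$ (or a curve numerically proportional to it) via a Mori‑type extremal contraction whose length is bounded by $2$ by the surface adjunction/bend‑and‑break input, and then pull the bound back to $C$ using $\bM_S \cdot C \geq 0$ and effectivity of the exceptional corrections.

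The main obstacle I anticipate is part (2): there is no adjunction along an $\cF$‑invariant curve that keeps the divisor effective, so the rationality and the sharp length bound genuinely require the foliated surface MMP / bend‑and‑break machinery rather than a formal adjunction computation, and one must be careful that passing between Mumford‑style intersection numbers on the possibly non‑$\Q$‑factorial $S$ and honest intersection numbers on a $\Q$‑factorial model does not worsen the constant — the effective exceptional and b‑divisor corrections must all have the correct (helpful) sign, which is guaranteed by Lemma~\ref{lemma:bnefonsurface} and Lemma~\ref{lemma:surfacemodel} but should be spelled out.
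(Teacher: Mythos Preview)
Your reductions via $\bM_S \cdot C \geq 0$ and $B \cdot C \geq 0$ are correct and match the paper. The difference is in how each part is completed.

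For (1) there is a genuine gap. Proposition~\ref{prop:subadj,primediv} requires $K_\cF + B$ and $C$ to be $\R$-Cartier, which can fail on $S$, so you cannot apply adjunction there directly. Your proposed fix --- pass to a $\Q$-factorial model and ``verify the adjunction coefficients pull back correctly'' --- is exactly where the work lies, and it is not automatic: on an arbitrary $\Q$-factorial model $f\colon S' \to S$ the exceptional part of $f^*K_\cF - K_{\cF'}$ need not be effective, so one cannot compare $(K_{\cF'} + C') \cdot C'$ with $(K_\cF + C) \cdot C$. The paper instead takes a resolution $\mu\colon Y \to S$ with $\cG := \mu^{-1}\cF$ having reduced singularities and runs a $(K_\cG + C_Y)$-MMP over $S$, terminating in $\pi\colon X \to S$. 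The key claim is that each contracted curve is disjoint from the strict transform of $C$: near $C$ the model stays smooth, so a contracted $E$ meeting $C_{Y'}$ would satisfy $C_{Y'} \cdot E \geq 1$ while $K_{\cG'} \cdot E \geq -1$, contradicting $(K_{\cG'} + C_{Y'}) \cdot E < 0$. Hence $X$ is smooth near $C_X$, adjunction applies there, and nefness of $K_{\cF_X} + C_X$ over $S$ forces $\Gamma \geq 0$ in $K_{\cF_X} + C_X + \Gamma = \pi^*(K_\cF + C)$ by Lemma~\ref{lemma:bnefonsurface}. (You could also salvage your route by using the specific model of Lemma~\ref{lemma:surfacemodel}, where $\Delta' \geq 0$ is already arranged, but you did not identify this.)

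For (2) your outline is correct but more elaborate than needed. After reducing to $K_\cF \cdot C < 0$ and passing via Lemma~\ref{lemma:surfacemodel} to $(S',\cF')$ canonical with $K_{\cF'} \cdot C' \leq K_\cF \cdot C < 0$, the paper simply invokes Brunella's tangency formula
\[
K_{\cF'} \cdot C' = 2p_a(C') - 2 + Z(\cF',C'),
\]
with $Z(\cF',C') \geq 0$ since $\cF'$ is canonical. This immediately gives $p_a(C') = 0$ (hence $C$ is rational) and $K_{\cF'} \cdot C' \geq -2$, so $(K_\cF + B + \bM_S) \cdot C \geq K_\cF \cdot C \geq K_{\cF'} \cdot C' \geq -2$. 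No bend-and-break or further MMP is required.
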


\begin{proof}
For (1), by Lemma~\ref{lemma:bnefonsurface}, we have $\bM_S \cdot C \geq 0$. Since $C$ is not contained in $\Supp B$, we also have $B \cdot C \geq 0$.  
Thus, it remains to show that $(K_\cF + C) \cdot C \geq 0$. The proof is similar to \cite[Proposition 3.4]{Spi20}, but we present a complete proof for the reader’s convenience.
Let $\mu:Y \to S$ be a log resolution of $(X,C)$ and let $\cG = \mu^{-1}\cF$. Possibly replacing $\mu$ with a higher resolution we may assume that $\cG$ has reduced singularities. Let $C_Y = \mu^{-1}_* C$. By \cite[Corollary 2.2]{Spi20}, we may run a $(K_\cG + C_Y)$-MMP over $S$, which terminates on $\pi:X \to S$. 

We claim that this MMP only contracts curves $E$ which are disjoint from the strict transform of $C$. We apply induction on the number of steps of this MMP. Let $Y \to Y'$ be an intermediate step of the MMP, let $\cG'$ be the transformed foliation on $Y'$, and let $C_{Y'}$ be the strict transform of $C$ on $Y'$. Note that by induction $Y'$ is smooth in a neighborhood of $C_{Y'}$, then for any curve $E$ intersects $C_{Y'}$ we have $ C_{Y'}\cdot E\geq 1$. On the other hand, the relative MMP only contracted curves $E$ such that $(K_{\cG'} + C_{Y'})\cdot E <0$ and $0>K_{\cG'}\cdot E \geq -1$, then the contracted curve cannot intersect with $C_{Y'}$.

Let $\cF_X = \pi^{-1}\cF$ and $C_X = \pi^{-1}_* C$. Since $K_{\cF_X} + C_X$ is nef over $S$, by Lemma~\ref{lemma:bnefonsurface}, we may write 
\[
K_{\cF_X} + C_X + \Gamma = \pi^*(K_\cF + C),
\] 
where $\Gamma \geq 0$ is exceptional over $S$. Therefore
\[
(K_\cF + C)\cdot C  = \pi^*(K_\cF + C) \cdot C_X = (K_{\cF_X} + C_X + \Gamma) \cdot C_X \geq 0,
\]
where the last inequality holds by \cite[Proposition-Definition 3.7]{CS24} and the fact that $X$ is smooth in a neighborhood of $C_X$.

For (2), since $B \cdot C \geq 0$ and $\bM_S \cdot C \geq 0$, we have $K_\cF \cdot C <0$. By Lemma~\ref{lemma:surfacemodel}, there exists a birational morphism $g:S'\to S$ such that $K_{\cF'} + \Delta' = g^*K_\cF$ where $\Delta' \geq 0$ is exceptional over $S$ and $\cF' = g^{-1}\cF$ has canonical singularities. Let $C'$ be the strict transform of $C$ to $S'$. then $0 > K_{\cF}\cdot C = g^*K_\cF \cdot C' = (K_{\cF'}+ \Delta')\cdot C'\geq K_{\cF'} \cdot C' $. Then $C'$ is $K_{\cF'}$-negative, then we have
\[
0 > K_{\cF'} \cdot C' = 2p_a(C')-2 + Z(\cF',C').
\]
We refer the reader to \cite[\S 2]{Bru02} for the definition of $Z(\cF',C')$. Since $\cF'$ is canonical, we have $Z(\cF',C') \geq 0$. Therefore $C'$ is a rational curve and so is $C$. Moreover, $K_{\cF'}\cdot C'\geq -2 $, and $(K_\cF + B + \bM_S) \cdot C \geq K_\cF \cdot C \geq K_{\cF'} \cdot C' \geq -2$.

\end{proof}

In the end of this section we give the cone theorem for the surface num-gfq, which generalizes \cite[Theorem 6.3]{Spi20}.

\begin{proposition}\label{prop:conethmnumgfq}
    Let $S$ be a normal projective surface and $(S,\cF,B,\bM)$ be a rank $1$ surface num-gfq. Let $f:X \to S$ be the minimal resolution of $S$. Let $Z_{-\infty}$ be the subcone of $\NE^M(S)$ spanned by those prime divisors $D\subset \Supp B$ such that $\mu_D B >\epsilon(D)$. Let $\{R_j\}_{j\in \Lambda}$ be the set of all \( (K_\cF + B + \bM_S) \)-negative extremal rays of $\NE^M(S)$ not contained in $Z_{-\infty}$. Then the following hold:
    \begin{enumerate}
    \item[(1)] We may write 
    \[
        \NE^M (S) = \NE^M(S)_{K_\cF + B + \bM_S\geq 0} + Z_{-\infty} + \sum_{j \in \Lambda} R_j.
    \]
    \item[(2)] For each $j \in \Lambda$, $R_j$ is generated by a rational curve $C_j$ which is tangent to $\cF$, and 
    \[
    0 < -(K_\cF + B + \bM_S) \cdot C_j \leq 2.
    \]
    \item[(3)] For each ample $\R$-divisor $A'$ on $X$, let $A = f_* A'$. Then the set 
    \[
    \Lambda_A :=\{\, j \in \Lambda \mid R_j \subset \NE^M(S)_{K_\cF + B + \bM_S +A < 0} \, \}
    \]
    is finite.
    \item[(4)] Every $(K_\cF + B + \bM_S)$-negative extremal ray is generated by an irreducible curve.
    \end{enumerate}
\end{proposition}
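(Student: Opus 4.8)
The plan is to reduce the statement to the cone theorem for foliated surfaces (in the sense of McQuillan--Brunella, as recorded e.g. in \cite[Theorem 6.3]{Spi20}) combined with the intersection-theoretic input of Proposition~\ref{prop:surfnum}. The main point is that although $(S,\cF,B,\bM)$ involves a b-divisor $\bM$ and a non-$\Q$-factorial ambient surface, the negativity condition $(K_\cF + B + \bM_S)\cdot C_j < 0$ already forces $K_\cF\cdot C_j < 0$ by Lemma~\ref{lemma:bnefonsurface} (since $\bM_S\cdot C\geq 0$ and $B\cdot C\geq 0$ once $C\not\subset\Supp B$), so the genuinely foliated content is about $K_\cF$ alone.

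First I would pull everything up to the minimal resolution $f\colon X\to S$, using the surjection $h\colon N_1(X)_\R\to N_1^M(S)_\R$ and Lemma~\ref{lemma:ExtRayPullback}: each extremal ray $R_j$ of $\NE^M(S)$ lies in the image of $h$, so it lifts to an extremal ray $R_j'$ of $\NE(X)$ with $h(R_j')=R_j$, and $R_j$ is $(K_\cF+B+\bM_S)$-negative iff $R_j'$ is $f^*(K_\cF+B+\bM_S)$-negative. Writing $K_{\cF_X}+B_X+\bM_X = f^*(K_\cF+B+\bM_S)$ in the sense of Mumford (with $\bM$ descending to $X$ after possibly replacing $X$ by a higher resolution, which does not affect $N_1^M(S)_\R$), the lifted ray is $(K_{\cF_X}+B_X+\bM_X)$-negative on the $\Q$-factorial surface $X$. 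Since $B_X$ may have negative coefficients along $f$-exceptional curves, I would absorb the non-effective exceptional part and the nef b-part into the decomposition, so that on $X$ one is studying $(K_{\cF_X}+B_X^{\geq 0})$-negativity modulo the negative-definite exceptional cone and $Z_{-\infty}$; then the classical cone theorem for foliated surfaces \cite[Theorem 6.3]{Spi20} (applied to a resolution with reduced singularities, cf. the use of Lemma~\ref{lemma:surfacemodel} in the proof of Proposition~\ref{prop:surfnum}) produces countably many $(K_{\cF_X}+B_X^{\geq0})$-negative extremal rays, each generated by a rational curve tangent to $\cF_X$, together with the cone decomposition; pushing forward by $f_*$ and using $h$ gives parts (1) and (2), with the bound $0 < -(K_\cF+B+\bM_S)\cdot C_j \leq 2$ coming directly from Proposition~\ref{prop:surfnum}(2) applied to the irreducible curve $C_j = f_* C_j'$.

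For part (3), the finiteness of $\Lambda_A$: after choosing $A = f_*A'$ with $A'$ ample on $X$, an $R_j$ in $\Lambda_A$ lifts to a ray $R_j'$ on $X$ with $(f^*(K_\cF+B+\bM_S) + f^*A)\cdot R_j' < 0$; since $f^*A = A' + (\text{effective exceptional})$ by Lemma~\ref{lemma:bnefonsurface}, and $A'$ is ample, the usual argument (the $(K+A')$-negative part of $\NE(X)$ of a $\Q$-factorial surface is a finite rational polyhedral cone, or equivalently the length bound $-(K_{\cF_X}+B_X^{\geq0})\cdot C_j' \leq 2$ plus boundedness of degrees against $A'$ forces finiteness) shows only finitely many such rays occur; mapping down by $f$ gives finiteness of $\Lambda_A$. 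Part (4) is then a formal consequence: given a $(K_\cF+B+\bM_S)$-negative extremal ray $R = R_j$, pick an ample $A'$ on $X$ so small that $R \in \Lambda_A$, so $R$ is one of the finitely many rays, hence by (2) it equals $\R_{\geq 0}[C_j]$ for the irreducible rational curve $C_j$; but I should be slightly careful, since (2) a priori only asserts each $R_j$ is \emph{generated by} such a curve — this is exactly the assertion of (4), so it follows once one knows every $(K_\cF+B+\bM_S)$-negative extremal ray is among the $R_j$, which is the content of (1).

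The main obstacle I anticipate is the bookkeeping in transferring the classical surface cone theorem across the non-$\Q$-factorial, b-divisor-laden setup: one must check that pulling back to $X$ does not create or destroy negative extremal rays outside $Z_{-\infty}$ (the prime divisors $D\subset\Supp B$ with $\mu_D B>\epsilon(D)$ and the exceptional cone must be handled symmetrically on $S$ and $X$), and that the notion of ``tangent to $\cF$'' descends along $f_*$ — which it does, since $C_j'$ tangent to $\cF_X = f^{-1}\cF$ means $f(C_j')$ is not $\cF$-invariant only if... here one uses that a non-invariant curve on $X$ maps to a non-invariant curve on $S$, or uses the adjunction/pullback compatibility directly. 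This is routine given the tools already assembled (Lemma~\ref{lemma:bnefonsurface}, Lemma~\ref{lemma:ExtRayPullback}, Lemma~\ref{lemma:surfacemodel}, Proposition~\ref{prop:surfnum}), but it is where all the care is needed.
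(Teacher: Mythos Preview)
Your strategy of pulling back to the resolution and invoking \cite[Theorem~6.3]{Spi20} as a black box has a real gap at exactly the step you flag. After passing to a resolution $X'$ where $\bM$ descends, the crepant boundary $B_{X'}$ is typically non-effective, and applying the classical cone theorem to $(X',\cF_{X'},B_{X'}^{\geq 0})$ does not capture the rays you need: for a non-exceptional curve $C'$ one has
\[
(K_{\cF_{X'}}+B_{X'}^{\geq 0})\cdot C' \;=\; f'^*(K_\cF+B+\bM_S)\cdot C' \;+\; B_{X'}^{<0}\cdot C' \;-\; \bM_{X'}\cdot C',
\]
and both correction terms are $\geq 0$, so their difference has no definite sign. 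A ray that is $f'^*(K_\cF+B+\bM_S)$-negative can perfectly well be $(K_{\cF_{X'}}+B_{X'}^{\geq 0})$-nonnegative, and then the black-box decomposition on $X'$ simply does not see it. The proposed ``absorption'' into the exceptional cone or $Z_{-\infty}$ does not help, because the discrepancy $B_{X'}^{<0}$ is exceptional but it is its intersection with the \emph{non}-exceptional curve $C'$ that enters.

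The paper avoids this reduction and argues directly on $S$ via Mumford intersection theory. For an exposed ray $R$ with supporting function $H_R = K_\cF + B + \bM_S + D$ ($D$ numerically ample), it splits on whether $H_R^2 = 0$ or $H_R^2 > 0$. In the first case it pulls back only the nef divisors $H_R, D$ to $X$ and runs bend-and-break (Theorem~\ref{thm:B&B}) to find a rational tangent curve through a general point with $H_R\cdot C = 0$; in the second it writes $f^*H_R \sim_\R \Gamma' + E'$ with $\Gamma'$ ample, locates a component $C' \subset \Supp E'$ generating the lifted ray, and pushes down. Proposition~\ref{prop:surfnum} then supplies the bound $\leq 2$ and the rationality/tangency. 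Finiteness in (3) comes from bounding $A_k'\cdot C_j' \in \Z \cap (0,2/\epsilon]$ against a basis of ample Cartier divisors on $X$, which is close to what you sketch. For (4) you have overlooked the rays contained in $Z_{-\infty}$; the paper handles these by noting that $Z_{-\infty}$ is spanned by finitely many irreducible curves, so any extremal ray of $\NE^M(S)$ lying inside it must be generated by one of them.
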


\begin{proof} Let $R$ be an $(K_\cF+ B + \bM_S)$-negative exposed ray of $\NE^M(S)$.

    \medskip

    \textit{Step 1}.
    In this step we show that $R$ is generated by an irreducible curve $C$. 
    
    Let $H_R$ be a supporting function to $R$. Then $H_R$,  as an $\R$-Weil divisor, is nef and not numerically trivial. We may write $H_R = K_\cF + B + \bM_S + D$ for some numerically ample $\R$-divisor $D$ on $S$, then $f^*H_R$ is nef, so $H_R^2 = (f^*H_R)^2 \geq 0$.

    First we consider the case when $H_R^2 = 0$. There exists a unique integer $\nu\in\{0,1\}$ satisfying $H_R^\nu \cdot D^{2-\nu} >H_R^{\nu + 1} \cdot D^{1-\nu} = 0$. If $\nu = 0$, set $D_1 = H_R$ and $D_2 = D$. Otherwise set $D_1 = D_2 = H_R$. Then $D_1\cdot D_2 = 0$, $-(K_\cF + B + \bM_S) \cdot D_2 >0$. Since $B \geq 0 $, $\bM_S $ is pseudo-effective and $D_2$ is nef, we have $(B +\bM_S)\cdot D_2 \geq 0$, then $-K_\cF \cdot D_2 > 0$. Set $M = H_R + D = 2H_R-(K_\cF + B + \bM_S)$, then $M$ is nef on $S$.

    Let $D_1'$, $D_2'$, $M'$ be the pullback of $D_1$, $D_2$, $M$ to $X$ respectively. Then $D_1'$, $D_2'$, $M'$ is nef on $X$. Let $\cG = f^{-1}\cF$, then $- K_\cG \cdot D_2' = - K_\cF \cdot D_2 >0$. By Theorem~\ref{thm:B&B} ({\cite[Corollary 2.28]{Spi20}}), there exists a rational curve $C'$ through a general point of $X$, such that $C'$ is tangent to $\cG$, and $D_1' \cdot C' = 0$. Since $C'$ passes through a general point of $X$, let $C = f(C')$, then $C$ is a rational curve on $S$ which tangent to $\cF$. Note that $(K_\cF + B + \bM_S + D) \cdot C = H_R \cdot C = D_1' \cdot C' = 0$, then $R$ is generated by $C$. Also we have $(K_\cF + B + \bM_S) \cdot C < 0$. By Proposition~\ref{prop:surfnum} (2), $0<-(K_\cF + B + \bM_X)\cdot C \leq 2$.

    Now we assume that $H_R^2 > 0$, then $f^*H_R$ is nef and big. We may write $f^*H_R \sim_\R \Gamma' + E'$, where $\Gamma'$ is ample and $E' \geq 0$. Note that $f$ induces a surjective map $f_*: \NE(X)\to \NE^M(S)$. By Lemma~\ref{lemma:ExtRayPullback}, there exists an extremal ray $R'$ with $f_*(R') = R$. We have $(\Gamma' + E') \cdot R' = f^*H_R\cdot R' = H_R \cdot R = 0$, then $E' \cdot R' <0$. Therefore there exists an irreducible curve $C'$ on $X$, which is contained in $\Supp E'$ and generates $R'$. Since $f_*R' = R \neq 0$ in $\NE^M(S)$, $C'$ is not $f$-exceptional. Thus $R$ is generated by $C = f_*C'$. Let $\Gamma = f_*\Gamma'$ and $E = f_* E'$. Since $\Gamma'$ is ample, then we may write $\Gamma' + \Xi = f^*\Gamma$ for some exceptional $\Xi \geq 0$. Since $C'$ is not $f$-exceptional, $\Xi \cdot C'\geq 0$. Then $\Gamma \cdot C = f^* \Gamma \cdot C' = \Gamma' \cdot C' + \Xi \cdot C' >0 $, and since $(\Gamma + E) \cdot C = H_R\cdot C = 0$, we have $E \cdot C<0$, $C^2 <0$. 
    
    \medskip 

    \textit{Step 2}.
    Now we assume that $R$ is not contained in $Z_{-\infty}$. Let $C$ be an irreducible curve which generates $R$. We prove that $0<-(K_\cF + B + \bM_S) \cdot C \leq 2$. By Step 1, we may assume $H_R^2 >0$ and $C^2 <0$. Then $(S,\cF,B,\bM)$ is lc at the generic point of $C$. If $C$ is not $\cF$-invariant, then since $\mu_C B \leq 1$, we may write $B + tC = \Delta + C$, where $t \geq 0$, $\Delta$ is effective and $C$ is not contained in $\Supp \Delta$. Then by Proposition~\ref{prop:surfnum} (1), $$(K_\cF + B + \bM_S + tC)\cdot C = (K_\cF + \Delta + C + \bM_S)\cdot C \geq 0,$$
    which contradicts with the fact that $C^2 <0$ and that $R$ is $(K_\cF + B + \bM_S)$-negative. Then $C$ is $\cF$-invariant and $C$ is not contained in $\Supp B$. By Proposition~\ref{prop:surfnum} (2), $0<-(K_\cF + B + \bM_S) \cdot C \leq 2$.

    \medskip 

    \textit{Step 3}. In this step we prove (3). Assume $\Lambda_A$ is not a finite set. Let $K = f^*(K_\cF + B + \bM_S)$, and let $A_1',\dots ,A_{\rho}'$ be ample Cartier divisors on $X$ such that $A_1',\dots ,A_{\rho}'$ form a basis for $N^1(X)_\R = N_1(X)_\R$. Here $\rho = \rho(X)$. Let $H' =  \sum_{k=1}^{\rho}A_k'$ and $H = f_*H'$. There exists a sufficiently small $\epsilon >  0 $ such that $A'-\epsilon H' $ is ample. By Lemma~\ref{lemma:bnefonsurface}, we have $\Lambda_A \subset \Lambda_{\epsilon H}$. Thus we may assume that $A' = \epsilon H' = \epsilon \sum_{k=1}^{\rho}A_k'$.

    By \cite[Theorem 18.6]{Roc70}, we may replace $\Lambda_A$ by a countable infinite subset and assume that for each $j\in \Lambda_A$, $R_j$ is a $(K_\cF + B + \bM_S + A)$-negative exposed ray. By Step 1 and 2 above, $R_j$ is generated by $C_j$ with $(K_\cF + B + \bM)\cdot C_j \in [-2,0)$. Let $C_j' = f^{-1}_*C_j $. Then $K \cdot C_j' = (K_\cF + B + \bM_S)\cdot C_j \in [-2,0)$.  For each $j\in \Lambda_A$, there exists a numerically ample $L_j$ on $S$, such that $H_j = (K_\cF + B +\bM_S + A)+ L_j$ is a supporting function of $R_j$. Then $H_j \cdot C_j = 0$, and we have
    \begin{align*}
        0 &= H_j \cdot C_j = ((K_\cF + B +\bM_S + A)+ L_j)\cdot C_j \\
        &=f^*(K_\cF + B + \bM_S + A)\cdot C_j' + L_j \cdot C_j \\
        &=(K + f^*A)\cdot C_j' + L_j \cdot C_j \geq (K + f^*A)\cdot C_j' \\
        &= K \cdot C_j' + (A' + \Theta)\cdot C_j'\geq K \cdot C_j' + A' \cdot C_j' \\
        &\geq  -2 + \epsilon\sum_{k=1}^{\rho}(A_k' \cdot C_j').
    \end{align*}
    where $f^*A = A' + \Theta$, $\Theta\geq 0$ is exceptional over $S$. Therefore, $(A_k '\cdot C_j')\in(0, \frac{2}{\epsilon}]$ for each $k$. Since for each $k$, $A_k'$ are ample Cartier divisors, $(A_k '\cdot C_j')\in \Z$. After replacing $\Lambda_A$ with a countable infinite subset, we may assume that the values of $(A_k '\cdot C_j')$ are independent of $j$. Thus for every $j, j'\in\Lambda_A$, $C_{j}'\equiv C_{j'}'$, then $C_j \equiv C_{j'}$, which leads to a contradiction. Thus (3) follows.

    \medskip

    \textit{Step 4.} In this step we finish the proof. It suffices to show that for every $j \in \Lambda$, $R_j$ is exposed. If $R_j$ is extremal but not exposed, we may assume $R_j$ is $(K_\cF + B + \bM_S + A)$-negative, where $A$ is the pushforward of some sufficiently small ample $\R$-divisor on $X$. By \cite[Theorem 18.6]{Roc70}, there exist exposed rays $\{R_{j,i}\}_{i=1}^\infty$ such that $R_j = \lim_i R_{j,i}$. By (3), $\Lambda_A$ is finite, then we may assume that $R_{j,i}\in Z_{-\infty}$ for all $i$. Since $Z_{-\infty}$ is closed, we have $R_j\subset Z_{-\infty}$, which is a contradiction. Thus by Step 1 and 2, (1) and (2) follow. (4) follows from Step 1 and the fact that $Z_{-\infty}$ is spanned by finitely many irreducible curves. 
\end{proof}

\section{Cone theorem}

\subsection{Invariant curves}

We give a proposition that shows the b-divisor has non-negative degree along invariant curves, provided that the generalized foliated quadruple is log canonical. This is a key step in the proof of the following cone theorem and in the construction of the MMP for generalized foliated quadruples on threefolds of rank $1$.

\begin{proposition}\label{proposition:bdivoncurve}

    Let $X$ be a normal variety and $(X, \cF, B, \bM)$ be a rank 1 gfq. Assume that $K_\cF$ and $B$ are $\R$-Cartier. Let $C $ be an $\cF$-invariant curve such that $(X, \cF, B, \bM)$ is lc at the generic point of $C$. Then $\bM_X \cdot C \geq 0$.
\end{proposition}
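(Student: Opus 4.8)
The plan is to reduce to the situation where the foliation has simple singularities (or at least canonical singularities) along (a resolution of) $C$, use divisorial adjunction to restrict $\bM$ to an invariant surface or to a resolution containing $C$, and then invoke the non-negativity of b-nef b-divisors on surfaces established in Lemma~\ref{lemma:bnefonsurface}(2). The key local fact is that b-nef b-divisors pull back to b-nef b-divisors and that, on a normal surface, the trace of a b-nef b-divisor is nef as a Weil divisor; so the whole difficulty is to propagate the information along $C$ rather than at a single divisor.

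First I would take a log resolution $f\colon Y \to X$ of $(X, \Supp B \cup C)$ to which $\bM$ descends, and let $C_Y$ be the strict transform of $C$, with $g\colon C_Y^\nu \to C$ the induced morphism on the normalization. Since $\bM$ is b-nef b-$\R$-Cartier, $\bM_Y$ is nef on $Y$, hence $\bM_Y \cdot C_Y \geq 0$, and by the projection formula $\bM_X \cdot C = f^*\bM_X \cdot C_Y$. So it suffices to compare $f^*\bM_X$ and $\bM_Y$ along $C_Y$: by the negativity lemma (as in the proof of Proposition~\ref{prop:subadj,primediv}) we may write $f^*\bM_X = \bM_Y + E$ with $E \geq 0$ an $f$-exceptional divisor, whence $\bM_X \cdot C = \bM_Y \cdot C_Y + E \cdot C_Y \geq E \cdot C_Y$. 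The issue is that $E \cdot C_Y$ can be \emph{negative} when $C_Y$ meets the exceptional locus, so a naive argument does not work — this is exactly where lc-ness at the generic point of $C$ must enter.

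The main obstacle, therefore, is controlling the exceptional divisor $E$ along $C_Y$ using the hypothesis that $(X,\cF,B,\bM)$ is lc at the generic point of $C$; a purely divisorial discrepancy bound is not enough because it does not see how $E$ meets the curve. To handle this I would argue as follows: the lc condition at the generic point of $C$ forces, in particular, that $(X,\cF)$ is canonical at the generic point of $C$ (since $\epsilon(E') \leq 1$ for the relevant places and $B \geq 0$). Using Theorem~\ref{thm:simpleprop} and Lemma~\ref{lemma:inv.blowup}, blowing up only along $\cF$-invariant centres produces $E \geq 0$ in the relation $K_{\Tilde\cF} + E = f^*K_\cF$; iterating, one obtains a model $p\colon X' \to X$, isomorphic over the generic point of $C$, on which $p^{-1}\cF$ has simple singularities. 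Passing to this model does not change $\bM_X \cdot C$ (it is an isomorphism near the generic point of $C$ by construction, so after also resolving the strict transform of $C$ we may assume $C$ is a smooth curve in the simple-singularity locus). Now restrict to a surface: take a general hyperplane section $S'$ through $C$, or better, use that near the generic point of a simple singularity the ambient threefold is smooth or a cyclic quotient, so $C$ lies on a normal surface $S \subset X'$; apply Proposition~\ref{prop:subadj,primediv} (divisorial adjunction), under which $\bM|_{S^\nu}$ is again b-nef, and by Lemma~\ref{lemma:bnefonsurface}(2) its trace is a nef Weil divisor on $S^\nu$, so $\bM_{S^\nu}^{S} \cdot C^\nu \geq 0$; finally the inequality $\bM_X|_{S^\nu} \geq \bM_{S^\nu}^S$ from Proposition~\ref{prop:subadj,primediv} gives $\bM_X \cdot C = \bM_X|_{S^\nu} \cdot C^\nu \geq \bM_{S^\nu}^S \cdot C^\nu \geq 0$.

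I expect the delicate points to be: (i) verifying that blowing up along invariant centres to reach simple singularities does not affect the intersection $\bM_X \cdot C$ — this requires that all such blow-ups be isomorphisms over the generic point of $C$, which follows since $(X,\cF)$ is already canonical there; and (ii) producing the auxiliary invariant (or at least suitable) surface $S$ through $C$ on which adjunction applies — one may need to pass to an index-$1$ cover as in Lemma~\ref{lemma:inv.blowup} and descend, or argue directly on $C_Y^\nu$ by noting that $\bM|_{C_Y^\nu}$ makes sense as a b-divisor on a curve (hence is represented by a nef, i.e. nonnegative-degree, divisor after passing to a model where $\bM$ descends) together with $f^*\bM_X|_{C_Y^\nu} \geq \bM_Y|_{C_Y^\nu}$. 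The curve-level version is cleaner and probably the route actually taken: restrict $\bM$ to $C_Y^\nu$, use that b-nef restricts to b-nef, and that on a curve a b-nef b-divisor has nonnegative degree, then push forward; the lc hypothesis guarantees the correction term $E \cdot C_Y$ is itself nonnegative because $C_Y$ is not contained in the locus where $E$ has positive coefficient — and this last claim is precisely what lc-ness at the generic point of $C$, combined with $C$ being $\cF$-invariant, delivers.
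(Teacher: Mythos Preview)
Your proposal identifies the right difficulty --- controlling the defect $E \geq 0$ in $f^*\bM_X = \bM_Y + E$ along a lift of $C$ --- but does not actually overcome it. Two concrete problems. First, you claim that lc at the generic point of $C$ forces $(X,\cF)$ to be canonical there; this is false (for a non-invariant place $E'$ one only gets $a(E',\cF) \geq -1$), and Step~3 of the paper's proof is devoted precisely to the case where $\cF$ is \emph{not} canonical at the generic point of $C$. Second, the strict-transform argument does not work as stated: on a model $Y$ where $\bM$ descends, the morphism may have to blow up $C$ (or a centre containing it), so there is no canonical strict transform and every curve on $Y$ dominating $C$ could a priori lie in $\Supp E$. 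Your surface-through-$C$ variant has the same defect one level up: the inequality $\bM_X|_{S^\nu} \geq \bM^S_{S^\nu}$ is useful only if the difference has nonnegative intersection with $C$, which is again the original problem.

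The paper's mechanism is a discrepancy squeeze that produces a prime \emph{divisor} $D$ over $X$, centred on $C$, along which the $\bM$-defect vanishes (i.e.\ $\mu_D F_3 = 0$ where $\bM_{X'} + F_3 = f^*\bM_X$). Writing also $K_{\cF'} + F_1 = f^*K_\cF$ (Lemma~\ref{lemma:inv.blowup}) and $f^{-1}_*B + F_2 = f^*B$ with $F_1,F_2,F_3 \geq 0$, one has $\mu_D(F_1+F_2+F_3) = -a(D,\cF,B,\bM) \leq \epsilon(D)$. If $\cF$ is canonical at the generic point of $C$, take $D$ to be an exceptional component of the blow-up of $X$ along $C$; it is $\cF'$-invariant by \cite[III.i.4]{MP13}, so $\epsilon(D)=0$ forces all three $\mu_D F_i=0$. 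If $\cF$ is not canonical there, one passes to an index-one cover so that foliated discrepancies become integers, locates a non-invariant $D$ with $a(D,\cF)=-1$, and then $\mu_D F_1 = 1 = \epsilon(D)$ forces $\mu_D F_2 = \mu_D F_3 = 0$. In either case a general curve in the strict transform of $D$ on a higher model where $\bM$ descends dominates $C$ and avoids the defect, giving $\bM_X \cdot C \geq 0$. Your final sentence gestures toward this (``$C_Y$ is not contained in the locus where $E$ has positive coefficient''), but the correct object is a divisor rather than a curve, and neither the squeeze nor the canonical/non-canonical dichotomy appears in your argument.
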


\begin{proof}

\textit{Step 1.} It is clear that $\bM_X$ is $\R$-Cartier. We claim that there exists a birational morphism $f: {X'} \to X$ such that the following hold:
    \begin{enumerate}
        \item[(1)] $\bM_{X'} + F = f^*\bM_X$, where $F\geq 0$ is an $f$-exceptional $\R$-divisor on $X'$, and 
        \item[(2)] there exists a prime divisor $E$ on $X'$ such that the center of $E$ on $X$ is $C$, and $E$ is not contained in $\Supp F$.
    \end{enumerate}
    First we show that it suffices to prove the claim above. Let $g: X'' \to X'$ be a birational morphism such that $X''$ is smooth and $\bM $ descends to $X''$. We may write $\bM_{X''} + \Theta = g^*f^*\bM_X$. By the negativity lemma, we have $\Theta \geq 0$. Note that $F = g_*\Theta$. Let $E'$ be the strict transform of $E$ on $X''$, then $E'$ is not contained in $\Supp \Theta$. Take a general curve $C' \subset E'$ which dominates $C$. Then $\Theta \cdot C' \geq 0$ as $C'$ is not contained in $\Supp \Theta$. Let $d$ be the degree of the map $C' \to C$. Thus
    \begin{align*}
        \bM_{X} \cdot C = \frac{1}{d} (g^*f^*\bM_X \cdot C') = \frac{1}{d} (\bM_{X''}\cdot C' + \Theta\cdot C' )  \geq 0.
    \end{align*}

\medskip

\textit{Step 2.} In this step we reduce to the case when $K_\cF$ is Cartier. Note that it suffices to construct a morphism required in the claim of Step 1 over a neighborhood of a general point on $C$. After replacing $X$ with an open neighborhood of a general point of $C$, we may assume that there exists an index one cover $\sigma: Y \to X$ associated to $K_\cF$. Let $\cG = \sigma^{-1}\cF$, $B_Y = \sigma^*B$ and $\bM^Y = \sigma^* \bM$. Then $K_{\cG} = \sigma^*K_\cF$ is Cartier and by Lemma~\ref{lemma:finitepullbackMX}, $\bM^Y_Y = \sigma^* \bM_X$. Then we have 
\[
K_\cG + B_Y + \bM^Y_Y = \sigma^*(K_\cF + B + \bM_X).
\]
Let $\tilde{C}$ be an irreducible curve on $Y$ such that $\sigma(\tilde{C}) = C$. By Lemma~\ref{lemma:GFQfinite}, $(Y, \cG,B_Y,\bM^Y)$ is lc at the generic point of $\tilde{C}$. By \cite[Lemma 4.2]{CS24}, $\tilde{C}$ is $\cG$-invariant.

Assume that there exists a birational morphism $g: Y' \to Y$ such that $\bM^Y_{Y'} + F_Y = g^*\bM^Y_Y$, where $F_Y \geq 0$ is $g$-exceptional, and that there exists a prime divisor $E_Y$ on $Y'$ whose center on $Y$ is $\tilde{C}$ and which is not contained in $\Supp F_Y$. 
By \cite[Lemma 2.22]{Kol13}, there exists a birational morphism $f: X' \to X$ and a finite morphism $\sigma': Y' \to X'$ such that $E = \sigma'(E_Y)$ is a prime divisor exceptional over $X$ and is centered on $C$. 
After replacing $Y',X'$ with higher models, and replacing $E_Y$ with the strict transform, we may assume that $\bM$ descends to $X'$ and $X'$ is smooth. We may write $\bM_{X'} + F = f^* \bM_{X'}$, where $F \geq 0$ is $f$-exceptional. Note that $\bM^Y_{Y'} = \sigma'^* \bM_{X'}$. Therefore 
\[
\bM^Y_{Y'} + F_Y = g^* \bM^Y_Y = g^*\sigma^* \bM_X = \sigma'^*f^* \bM_X  = \sigma'^*(\bM_{X'} + F) = \bM^Y_{Y'} + \sigma'^* F,
\]
then we have $F_Y = \sigma'^*F$. If $E \subset \Supp F$, then $E_Y \subset \Supp F_Y$, which contradicts to the assumption of $E_Y$. Thus $E$ is not contained in $\Supp F$, and the claim in Step 1 holds for $f: X' \to X$ and $E$. Therefore, after replacing $(X,\cF,B,\bM),C$ with $(Y,\cG,B_Y,\bM^Y),\tilde{C}$, we may assume that $K_\cF$ is Cartier.

\medskip

\textit{Step 3.} In this step we prove the claim in Step 1 when $\cF$ is canonical at the generic point of $C$. Let $\phi: W \to X$ be the blow-up of $X$ along $C$ and let $\cF_W = \phi^{-1}\cF$. Since $C$ is $\cF$-invariant, by \cite[Lemma 1.1.3]{BM16} there exists a $\phi$-exceptional divisor $E_1 \geq 0$ with $K_{\cF_W} +E_1 = \phi^* K_\cF$. Let $B_W = \phi^{-1}_* B$, then $B_W + E_2 = \phi^* B$ for some $\phi$-exceptional $\R$-divisor $E_2 \geq 0$. Furthermore, by the negativity lemma, there exists a $\phi$-exceptional $\R$-divisor $E_3 \geq 0$ such that $\bM_W + E_3 = \phi^* \bM_X$. Therefore if we let $\Gamma = E_1 + E_2 + E_3$, then $\Gamma \geq 0$ and $K_{\cF_W} + B_W + \bM_{W} + \Gamma = \phi^*(K_\cF + B + \bM_X)$. Let $D$ be an irreducible component of $\Exc (\phi)$ which dominates $C$. Since $\cF$ is canonical at the generic point of $C$, we have $D$ is $\cF_W$-invariant by \cite[Corollary III.i.4]{MP13}. The lc condition implies $\mu_D\Gamma \leq \epsilon(D) = 0$, then $D$ is not contained in $\Supp \Gamma$ therefore also not contained in $\Supp E_3$. Thus the claim in Step 1 holds for $\phi: Y \to X $ and $D$. Note that $a(D,\cF) = -\mu_D E_1 = 0$.

\medskip

\textit{Step 4.} Now we may assume that $\cF$ is not canonical at the generic point of $C$. Since $(X,\cF)$ is also lc at the generic point of $C$, there exists a prime divisor $G$ over $X$ centered on $C$ such that $\epsilon (G) = 1$ and $a(G,\cF) <0$. Let $\psi: V \to X$ be a birational morphism such that $G$ is a prime divisor on $V$. Since $K_\cF$ is Cartier, $a(G,\cF)$ is an integer, then $a(G,\cF) = -1$ (see also \cite[Fact III.i.3]{MP13}). Let $\cF_V = \psi^{-1}\cF$, $B_V = \phi^{-1}_* B$. Similarly to Step 3, we may write $K_{\cF_V} + F_1 = \psi^*K_\cF$, $B_V + F_2 = \psi^* B$, and $\bM_V + F_3 = \psi^* \bM_X$, where $F_1,F_2,F_3$ are exceptional over $X$, $F_2,F_3 \geq 0$, and $\mu_G F_1 = - a(G,\cF) = 1$. Since $(X,\cF,B,\bM)$ is lc at the generic point of $C$, we have
\[
1 = \mu_G(F_1) \leq \mu_G(F_1 + F_2 + F_3) = -a(G,\cF,B,\bM) \leq \epsilon (G) = 1.
\]
Thus $G$ is not contained in $\Supp F_3$, and the claim in Step 1 holds for $\psi: V \to X$ and $G$.
\end{proof}
\begin{remark}\label{rmk:Clccenter}
    The proof above, combined with Lemma~\ref{lemma:GFQfinite}, also shows that in the setting of Proposition~\ref{proposition:bdivoncurve}, the curve $C$ is a log canonical center of $ (X,\cF) $. 
\end{remark}

\begin{corollary}\label{coro:singinvcurve}
    Let $X$ be a normal variety and let $(X, \cF, B, \bM)$ be a rank 1 gfq. Assume that $K_\cF$ and $B$ are $\R$-Cartier. Let $C \subset \Sing ^+ \cF$ be a curve and suppose that $(X, \cF, B, \bM)$ is lc at the generic point of $C$. Then $(K_\cF + B + \bM_X) \cdot C \geq 0$.
\end{corollary}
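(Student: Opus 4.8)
The plan is to deduce Corollary~\ref{coro:singinvcurve} directly from Proposition~\ref{proposition:bdivoncurve} together with the surface-type adjunction estimate, after reducing to the case where $C$ is $\cF$-invariant. First I would observe that the inclusion $C \subset \Sing^+\cF$ forces $C$ to be $\cF$-invariant: a curve contained in the locus where $\cF$ is singular in the sense of McQuillan is tangent to nothing outside itself, so on a suitable index-1 cover $\sigma:U'\to U$ the generating vector field $\partial$ vanishes along $\sigma^{-1}(C)$, hence preserves its ideal; invariance then descends to $X$. (This is the analogue of part (4) of the commented-out Lemma after the definition of $\Sing^+\cF$, and should be citable from \cite{CS24} or \cite{CS20}.) So from now on $C$ is an $\cF$-invariant curve, lc at its generic point.

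Next, by Proposition~\ref{proposition:bdivoncurve} we already know $\bM_X \cdot C \geq 0$, so it remains to bound $(K_\cF + B)\cdot C$ from below — in fact it suffices to show $(K_\cF + B)\cdot C \geq 0$. Since $X$ is $\Q$-factorial and $C$ is $\cF$-invariant with $(X,\cF,B,\bM)$ lc at its generic point, Remark~\ref{rmk:Clccenter} tells us $C$ is an lc centre of $(X,\cF)$. The idea is then to cut down to a surface: choose a general member $S$ of a very ample linear system through enough general points (or, more carefully, a general hyperplane section) so that $S$ is a normal surface containing $C$, $(S, \cF_S, B_S, \bM^S)$ — obtained by the divisorial adjunction of Proposition~\ref{prop:subadj,primediv} applied iteratively, or rather by restricting to a general surface section — is again an lc (sub-)num-gfq of rank $1$ with $\cF_S$ the restricted foliation, and $C$ remains $\cF_S$-invariant on $S$ with $(S,\cF_S,B_S,\bM^S)$ lc at the generic point of $C$. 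One must check that the restriction of $K_\cF + B + \bM_X$ to $S$ is exactly $K_{\cF_S} + B_S + \bM^S_S$ up to terms that do not affect the intersection with $C$; here the $\epsilon(S)S$ term in Proposition~\ref{prop:subadj,primediv} vanishes or is handled since $S$ is a general hyperplane section and not $\cF$-invariant, and one uses that $C$ is not a component of $B$ (as $(X,\cF,B,\bM)$ is lc, hence sub-canonical away from $B$, at the generic point of $C$ — or simply invoke that $\mu_C B \le \epsilon(C) = 0$).

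Then I would apply Proposition~\ref{prop:surfnum} on the surface $S$ to the $\cF_S$-invariant curve $C$: either $(K_{\cF_S} + B_S + \bM^S_S)\cdot C \geq 0$, in which case we are done since this intersection number equals $(K_\cF + B + \bM_X)\cdot C$, or $(K_{\cF_S}+B_S+\bM^S_S)\cdot C < 0$, in which case Proposition~\ref{prop:surfnum}(2) gives $0 < -(K_{\cF_S}+B_S+\bM^S_S)\cdot C \le 2$. But in the latter case I claim one gets a contradiction directly from Proposition~\ref{proposition:bdivoncurve} applied on $S$ itself — or, more simply, one avoids the surface reduction entirely and argues as follows instead: run the proof of Proposition~\ref{proposition:bdivoncurve} but keep track of the full divisor. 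Indeed, in Steps 2 and 3 of that proof one produces a birational model $f:X'\to X$ and a prime divisor $E$ centred on $C$ with $K_{\cF'}+B'+\bM_{X'}+\Gamma = f^*(K_\cF+B+\bM_X)$, $\Gamma \ge 0$, and $E$ not in $\Supp$ of the $\bM$-part of $\Gamma$. Restricting to a general complete intersection curve $C'\subset E$ dominating $C$ and using that $E$ is an lc place (so $\mu_E \Gamma \le \epsilon(E)$), together with the surface cone bound of Proposition~\ref{prop:conethmnumgfq}/Proposition~\ref{prop:surfnum} applied on a surface inside $E$, yields $(K_\cF+B+\bM_X)\cdot C \ge (K_{\cF'}+B'+\bM_{X'})\cdot C' /d \ge 0$.

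The main obstacle I expect is the careful bookkeeping of the adjunction/restriction to the surface: making sure the restricted datum $(S,\cF_S,B_S,\bM^S)$ is genuinely lc at the generic point of $C$ (not merely sub-lc, and with no new non-lc contributions from the hyperplane section, which requires $S$ general and possibly a Bertini-type argument that is subtle for foliations), and that the intersection number $(K_\cF+B+\bM_X)\cdot C$ is correctly computed on $S$ as $(K_{\cF_S}+B_S+\bM^S_S)\cdot C$. Once that is in place, the result is immediate from Proposition~\ref{prop:surfnum}(1) combined with $\bM_X\cdot C \ge 0$ from Proposition~\ref{proposition:bdivoncurve}, since the case $(K_{\cF_S}+B_S+\bM^S_S)\cdot C<0$ cannot occur: $C\subset \Sing^+\cF$ means $C$ is not a terminal-type curve, but more to the point $C$ being an lc centre rules out the $-(K_\cF+B+\bM_X)\cdot C \in (0,2]$ alternative because a negative self-intersection $\cF$-invariant curve through which $(X,\cF)$ is lc forces the discrepancy computation to close up with equality, forcing non-negativity — this last point is where I would need to lean hardest on the structure theory of rank-$1$ foliation singularities from \cite{Bru02,MP13}.
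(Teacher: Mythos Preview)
Your first two moves are right and match the paper: $C\subset\Sing^+\cF$ forces $C$ to be $\cF$-invariant, and then Proposition~\ref{proposition:bdivoncurve} gives $\bM_X\cdot C\ge 0$. The remaining inequality $(K_\cF+B)\cdot C\ge 0$ is where the paper and your proposal diverge. The paper dispatches this in one line by observing that $(X,\cF,B)$ is an lc foliated triple at the generic point of $C$ and then citing \cite[Lemma~4.7]{CS24}, which proves exactly this inequality for rank one foliated triples when $C\subset\Sing^+\cF$. You are essentially trying to reprove that lemma from scratch, and the route you sketch does not work.

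The surface reduction has a structural gap. If $S$ is a general very ample section of $X$ containing $C$, then $S$ is \emph{not} $\cF$-invariant, so by Proposition~\ref{prop:subadj,primediv} the restricted foliation $\cF_S$ has rank $r-\epsilon(S)=1-1=0$: it is the foliation by points. Proposition~\ref{prop:surfnum} is a statement about rank one surface num-gfqs and simply does not apply to rank zero data. Repeating the slice only makes this worse. You also cannot hope to find an $\cF$-invariant surface through $C$ in general. Even granting a rank one foliation on some surface through $C$, Proposition~\ref{prop:surfnum}(2) only bounds the negativity by $2$; it does not give non-negativity, and your closing paragraph never produces an actual argument ruling out the interval $(0,2]$. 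The hypothesis $C\subset\Sing^+\cF$ has to be used in a sharper way than ``$C$ is an lc centre'': that is what \cite[Lemma~4.7]{CS24} does, via adjunction to the curve $C$ itself rather than to an ambient surface.
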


\begin{proof}
    Since $(X,\cF,B)$ is also an lc foliated triple, by \cite[Lemma 4.7]{CS24} we have $(K_\cF + B ) \cdot C\geq 0$. It suffices to prove that $\bM_X \cdot C \geq 0$. It suffices to construct a birational morphism as in Step 1 of the proof of Proposition~\ref{proposition:bdivoncurve}, which is a local statement around a general point of $C$. As in Step 2 of the proof of Proposition~\ref{proposition:bdivoncurve}, we may also replace $X$ by a index one cover associated to $K_\cF$ over a neighborhood of a general point of $C$ and assume that $K_\cF$ is Cartier and $\cF$ is generated by a vector field $\partial$. By the definition of $\Sing^+ \cF$, $C$ is $\cF$-invariant, and we may conclude by the same proof of Proposition~\ref{proposition:bdivoncurve}.
\end{proof}

\subsection{Non-invariant curves}

Let $X$ be a $\Q$-factorial normal variety and $(X, \cF, B, \bM)$ be a gfq of rank 1.  
As in \cite{CS24}, for a non-invariant curve $C$, after replacing $X$ by a suitable birational model $X'$, we aim to find an $\cF$-invariant surface containing $C$.  The following lemma is a variant of \cite[Lemma 4.6]{CS24} with the $\Q$-factoriality condition.

\begin{lemma}\label{lemma:findsurface,Q-fac}
    Let $X$ be a normal $\Q$-factorial variety, let $\cF$ be a foliation of rank $1$ on $X$. Let $C$ be a curve on $X$ which is not $\cF$-invariant and which is not contained in $\Sing^+ \cF$. Then there exists a birational morphism $p: X' \to X$ such that the following hold:
    \begin{enumerate}
        \item[(1)] Let $\cF' = p^{-1} \cF$, then $K_{\cF'} + E = p^*K_\cF$, where $E \geq 0$ is a $p$-exceptional divisor;
        \item[(2)] $p^{-1}$ is an isomorphism at the generic point of $C$, and let $C'$ be the strict transform of $C$ to $X'$, then $\cF'$ is terminal at all points of $C'$; 
        \item[(3)] after possibly replacing $X$ by an analytic neighborhood of $C$, there exists a $\cF'$-invariant surface $\Gamma$ containing $C'$; and
        \item[(4)] $X'$ is $\Q$-factorial. 
    \end{enumerate}
\end{lemma}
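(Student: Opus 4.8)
The plan is to follow the construction behind \cite[Lemma~4.6]{CS24} to produce a birational model satisfying (1)--(3), and to upgrade it so that (4) also holds. The point is that $\Q$-factoriality is automatic in a neighbourhood of (the strict transform of) $C$ once the foliation is made terminal there, so the only real work is to carry out a foliated terminalization along $C$ by blow-ups along $\cF$-invariant centres and then to clean up the singularities elsewhere.

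\textbf{Step 1: terminalization along $C$.} Since $C$ is not $\cF$-invariant, $C\not\subset\Sing^+\cF$, so $\Sing^+\cF$ meets $C$ in finitely many closed points. I would first apply the foliated resolution of Theorem~\ref{thm:simpleprop}(1) — a sequence of (weighted) blow-ups along $\cF$-invariant centres — to obtain $q_1\colon X_1\to X$ with $\cF_1:=q_1^{-1}\cF$ having simple singularities; by Lemma~\ref{lemma:inv.blowup} (and its weighted analogue) we may write $K_{\cF_1}+E_1=q_1^*K_\cF$ with $E_1\geq 0$ exceptional, and since each blown-up centre is $\cF$-invariant while $C$ is not, $q_1$ is an isomorphism at the generic point of $C$. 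The strict transform $C_1$ of $C$ may still meet $\Sing^+\cF_1$ in finitely many closed points, each of which (being a simple singularity that is not terminal) is either a point at which $X_1$ is smooth but $\cF_1$ is singular, or a singularity of the type in Example~\ref{ex:simple}; blowing up the $\cF_1$-invariant centres through these points and iterating finitely many times (as in the terminalization arguments of \cite{CS20,CS24}), and reapplying Theorem~\ref{thm:simpleprop}(1), one reaches $p\colon X'\to X$ with $\cF'=p^{-1}\cF$ such that $p$ is an isomorphism at the generic point of $C$, $K_{\cF'}+E=p^*K_\cF$ with $E\geq 0$ exceptional (giving (1)), the strict transform $C'$ of $C$ is disjoint from $\Sing^+\cF'$, i.e.\ $\cF'$ is terminal at every point of $C'$ (giving (2)), and $\cF'$ still has simple singularities, so $X'$ has cyclic quotient singularities and is $\Q$-factorial klt by Theorem~\ref{thm:simpleprop}(2) (giving (4)).

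\textbf{Step 2: the invariant surface.} Replace $X$ by an analytic neighbourhood of $C$, so that $\cF'$ is everywhere terminal. By \cite[Lemma~2.12]{CS20} every point of $C'$ has an analytic neighbourhood $U_\alpha\subset X'$ admitting a quasi-\'etale morphism $\sigma_\alpha\colon V_\alpha\to U_\alpha$ and a holomorphic submersion $f_\alpha\colon V_\alpha\to B_\alpha$ with $\sigma_\alpha^{-1}(\cF'|_{U_\alpha})$ induced by $f_\alpha$. As $C'$ is transverse to $\cF'$, the curve $\widetilde C_\alpha:=\sigma_\alpha^{-1}(C'\cap U_\alpha)$ is not contained in a fibre of $f_\alpha$, so $D_\alpha:=f_\alpha(\widetilde C_\alpha)$ is a curve germ in $B_\alpha$; then $\Sigma_\alpha:=f_\alpha^{-1}(D_\alpha)$ is a $2$-dimensional $\sigma_\alpha^{-1}\cF'$-invariant analytic set containing $\widetilde C_\alpha$, and $\Gamma_\alpha:=\sigma_\alpha(\Sigma_\alpha)$ is a $2$-dimensional $\cF'$-invariant analytic set containing $C'\cap U_\alpha$. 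Any $\cF'$-invariant surface through $C'$ must contain the leaf through each point of $C'$, so the $\Gamma_\alpha$ agree on overlaps and glue to an $\cF'$-invariant surface $\Gamma\supset C'$ defined in a neighbourhood of $C'$; this gives (3).

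\textbf{Main obstacle and an alternative.} The delicate part is Step~1: showing that $C_1$ can be separated from $\Sing^+\cF_1$ by finitely many blow-ups along $\cF_1$-invariant centres, that this process terminates, and that it does not reintroduce non-terminal points on the successive strict transforms of $C$ nor destroy simpleness elsewhere; the rest (discrepancy bookkeeping via Lemma~\ref{lemma:inv.blowup} and the negativity lemma, the local submersion picture, and the gluing) is routine. An alternative route to (4), which does not need the final model to have simple singularities, is to invoke \cite[Lemma~4.6]{CS24} directly for a model $X_0\to X$ with (1)--(3), observe that $X_0$ has quotient singularities along $C_0$ — hence is $\Q$-factorial there — because $\cF_0$ is terminal along $C_0$, and then take a small $\Q$-factorialization $X'\to X_0$: being small it introduces no exceptional divisors and is an isomorphism over a neighbourhood of $C_0$, so it preserves (1)--(3) while yielding (4) (this requires $X_0$ to be klt, which one again arranges via the simple-singularities output of the foliated resolution).
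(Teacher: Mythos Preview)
Your route differs from the paper's. The paper does not resolve to simple singularities; it passes to a finite Galois cover $\sigma\colon Y\to X$ with group $G$, quasi-\'etale away from a general ample divisor missing the finitely many non-terminal points of $\cF$ on $C$, and with $K_{\sigma^{-1}\cF}$ Cartier. On $Y$ it performs $G$-equivariant ordinary blow-ups along $\cG$-invariant centres --- first via \cite{BM97} to make the ambient space smooth at the relevant points, then via \cite[Proposition~1.2.4]{BM16} to separate the curve from the singular locus of the foliation --- obtaining $Y'$ smooth along the strict transform $\tilde C'$ with the induced foliation terminal there; the discrepancy inequality comes from \cite[Lemma~1.1.3]{BM16}. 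Setting $X'=Y'/G$, near $C'$ this is a finite quotient of a smooth variety, hence $\Q$-factorial (with \cite{BGS11} to pass from local to global after shrinking), and the invariant surface is the image of the one on $Y'$ supplied by \cite[Proposition~1.2.4]{BM16}. This works in any dimension and delegates all termination questions to \cite{BM97} and \cite{BM16}.

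Your Step~1 has genuine gaps. The lemma is stated in arbitrary dimension, but Theorem~\ref{thm:simpleprop} is for threefolds only. More seriously, you do not establish that $C_1$ can be separated from $\Sing^+\cF_1$ in finitely many steps while preserving simple singularities --- you flag this as ``the delicate part'' but give neither an invariant that drops nor a reference; this is exactly what the cover-and-quotient construction sidesteps, since on the smooth cover \cite[Proposition~1.2.4]{BM16} applies directly. A smaller issue: Lemma~\ref{lemma:inv.blowup} treats ordinary blow-ups, so invoking it for the weighted blow-ups of Theorem~\ref{thm:simpleprop}(1) needs justification you do not supply. Your alternative via a small $\Q$-factorialization does not close the gap either: kltness of $X_0$ is not guaranteed by \cite[Lemma~4.6]{CS24}, and arranging it via simple singularities is circular. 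Your Step~2 gluing is a reasonable substitute for the paper's use of \cite[Proposition~1.2.4]{BM16} on the cover, though the claim that the local $\Gamma_\alpha$ agree on overlaps deserves a line of justification (each is the saturation of $C'\cap U_\alpha$ by $\cF'$-leaves, which is intrinsic).
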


\begin{proof}
    We mostly follow the proof of \cite[Lemma 4.6]{CS24} and present the full argument for the reader’s convenience. 
    By \cite[Lemma 2.9]{CS20}, $\cF$ is terminal at general points on $C$. 
    Let $P_1,\dots, P_k\in C$ be all the closed points where $\cF$ is not terminal. Let $H$ be a sufficiently general ample divisor on $X$ such that $P_i \notin \Supp H$ for $1 \leq i \leq k$. Let $M$ be the Cartier index of $K_\cF$ and we may assume that $\cO_X(M K_\cF) |_{X\setminus H} \cong \cO_{X\setminus H}$. Then we may find a finite Galois morphism $\sigma:Y \to X$ with Galois group $G$, such that $\sigma $ is quasi-\'etale outside $H$ and $\sigma^*K_\cF$ is Cartier on $Y$. Let $\cG = \sigma^{-1}\cF$, $\Tilde{C} = \sigma^{-1}(C)$, and $Z = \sigma^{-1}(\{P_1,\dots ,P_k\})$. Then $K_\cG$ is Cartier on $Y$ near each closed point $Q \in Z$. By Lemma~\ref{lemma:GFQfinite}, $\cG$ is terminal at every closed points $ Q' \in \Tilde{C} \setminus Z$.

    By the desingularization algorithm in \cite{BM97}, after possibly replacing $X$ by an analytic neighborhood of $C$, there exists a birational morphism $\alpha: \overline{Y}\to Y$, obtained as a sequence of $G$-equivariant blow-ups along $\cG$-invariant centers, such that $\alpha^{-1}$ is an isomorphism near each point of $\tilde{C} \setminus Z$, and if we let $\overline{C}$ to be the strict transform of $\Tilde{C}$ on $\overline{Y}$, $\overline{Y}$ is smooth at each closed point contained in $\overline{C} \cap \alpha^{-1}(Z)$. By \cite[Proposition 1.2.4]{BM16}, there exists a birational morphism $\beta: Y' \to \overline Y$, obtained as a sequence of $G$-equivariant blow-ups along the $\cG$-invariant closed points, such that $\beta^{-1}$ is an isomorphism near each point of $\overline{C}\setminus \alpha^{-1}(Z)$, and the strict transform $\Tilde{C}'$ of $\tilde{C}$ on $Y'$ is disjoint from $\Sing^+\beta^{-1}\alpha^{-1}\cG$. Let $q = \alpha \circ \beta:Y' \to Y$. Note that each step in the sequence of blow-ups that constructs $\beta$ is a blow-up along a closed point over $\overline{C} \cap \alpha^{-1}(Z)$, so $Y'$ is smooth at each point in $\tilde{C}' \cap q^{-1}(Z)$. Let $\cG' = q^{-1}\cG$, then by \cite[Lemma 1.1.3]{BM16}, $K_{\cG'} + E' = q^*K_\cG$ for some $q$-exceptional $E'\geq 0$.

    In the construction above, we see that $q$ is $G$-equivariant. Therefore if $X' = Y'/G$, then there exists a birational morphism $p:X' \to X$ induced by quotient of $q$ by $G$. Let $r: Y' \to X'$ be the quotient map. It follows that $K_{\cF'}+ E = p^*K_\cF$ where $\cF' = p^{-1}\cF$ and $E\geq 0$ is $p$-exceptional by \cite[Lemma 3.4]{Dru21} (see also \cite[Proposition 2.2]{CS20}). Thus (1) follows. 

    Note that $p^{-1}$ is an isomorphism near each point $P\in  C \setminus \{P_1, \dots, P_k\}$. Then $X'$ is $\Q$-factorial near $p^{-1}P$. Let $C'$ be the strict transform of $C$ on $X'$. For any $Q\in C'$ such that $p(Q)\in \{P_1, \dots, P_k\}$, since $Y'$ is smooth at each point of $r^{-1}(Q)$, $X'$ is $\Q$-factorial near $Q$ (see e.g. \cite[Theorem 3.8.1]{Ben93}). Then by \cite[Théorème 6.1]{BGS11}, after possibly shrinking $X$ near $C$, we have $X'$ is $\Q$-factorial, (4) follows. 
    
    Since $\Tilde{C}'$ is disjoint from $\Sing^+\cG'$, by \cite[Lemma 2.9]{CS20}, $\cG'$ is terminal at all closed points on $\Tilde{C}'$. By \cite[Corollary III.i.4]{MP13}, each exceptional divisor over $Y'$ centered on some closed point on $\Tilde{C}'$ is foliated invariant. Thus each exceptional divisor over $X'$ centered on some closed point on $C'$ is foliated invariant. By the computation in Lemma~\ref{lemma:GFQfinite}, $\cF'$ is terminal at all closed points of $C'$, and (2) follows.

    By \cite[Proposition 1.2.4]{BM16}, there exists a $\cG'$-invariant surface $S$ containing $\Tilde{C}'$. We may take $\Gamma = r(S)$ then (3) follows. 
\end{proof}

\begin{lemma}\label{lemma:noninvcurveininvsurf}
    Let $X$ be a normal $\Q$-factorial threefold and let $(X,\cF,B,\bM)$ be a rank 1 gfq on $X$. Let $C$ be a curve on $X$ which is not $\cF$-invariant and is not contained in $\Sing^+ \cF$, and let $S$ be an $\cF$-invariant surface such that $S$ is not contained in $\Supp B$ and $C \subset S$. Let $\nu: S^\nu \to S$ be the normalization map and let $B_S$ be an $\R$-divisor on $S^\nu$ such that
    \[
    (K_\cF + B + \bM_X)|_{S^\nu} = K_{\cF_S} + B_S + \bM^S_{S^\nu},
    \]
    where $\cF_S$ is the restricted foliation on $S^\nu$ and $\bM^S = \bM|_{S^\nu}$ is the restricted b-divisor. Then if $(X,\cF,B,\bM)$ is lc at the generic point of $C$, then $(S^\nu,\cF_S,B_S,\bM^S)$ is lc at the generic point of $\nu^{-1}(C)$.
\end{lemma}

\begin{proof}
    It is a local statement at a general point $x$ of $C$, so by Lemma~\ref{lemma:GFQfinite} we may replace $X$ by an index one cover and assume $K_\cF$ is Cartier. We may also assume that $C$ does not intersect with $\Sing^+ \cF$. Then by \cite[Lemma 1.2.1]{BM16}, there exists an analytic neighborhood $U$ of $x$ and a smooth morphism $q:U \to V$ such that $\cF|_U$ is induced by $q$. After replacing $X$ with $U$, we may assume $\cF$ is algebraically integrable. By \cite[Theorem 6.6.1]{CHLX23}, $(S^\nu,\cF_S,B_S,\bM_S)$ is lc at the generic point of $\nu^{-1}(C)$. 
\end{proof}

\begin{lemma}\label{lemma:noninv}
    Let $X$ be a normal $\Q$-factorial threefold and $(X, \cF, B, \bM)$ be a rank 1 gfq on $X$. Let $C$ be a curve on $X$ which is not $\cF$-invariant and is not contained in $\Sing^+ \cF$. Assume that $(X,\cF,B,\bM)$ is lc at the generic point of $C$. Suppose that there exists an effective $\R$-divisor $T$ on $X$ satisfying: 
    \begin{itemize}
        \item $T\cdot C < 0$, 
        \item there exists an irreducible component $T_0$ of $\Supp T$ such that $C \subset T_0$, and $C$ is not contained in any other component of $\Supp T$, and
        \item $T_0$ is not $\cF$-invariant.
    \end{itemize}
    Then $(K_\cF + B +\bM_X) \cdot C\geq 0$. 
\end{lemma}

\begin{proof}
    Assume that $(K_\cF + B + \bM_X) \cdot C< 0$. By Lemma~\ref{lemma:findsurface,Q-fac}, after possibly replacing $X$ by an analytic neighborhood of $C$, there exists a birational morphism $p: X' \to X$ which satisfies the properties stated in Lemma~\ref{lemma:findsurface,Q-fac}.
    We may write 
    \[
    K_{\cF'} + B' + \bM_{X'} = p^*(K_{\cF} + B + \bM_X),
    \]
    where $\cF' = p^{-1}\cF$. By Lemma~\ref{lemma:findsurface,Q-fac} (1) and the negativity lemma, we have $B' \geq 0$. Let $C'$ be the strict transform of $C$ on $X'$. Let $T'= p^*T$ and let $T_0'$ be the strict transform of $T_0$ on $X'$. Note that $X'$ is $\Q$-factorial. Since $p^{-1}$ is an isomorphism at the generic point of $C$, we have that $T'$, $T_0'$ and $C'$ also satisfy the conditions stated in this lemma. After possibly replacing $X,\cF,B,C , T$ and $T_0$ by $X',\cF',B',C',T'$ and $T_0'$, respectively, we may assume $\cF$ is terminal at all points of $C$ and $C$ is contained in an $\cF$-invariant surface $S$. 
    
    Since $(X,\cF,B,\bM)$ is lc at the generic point of $C$, we have $S$ is not contained in $\Supp B$. Let $S^\nu \to S$ be the normalization of $S$. By Proposition~\ref{prop:subadj,primediv}, there exists $B_S\geq 0$ on $S^\nu$ with
    \[
    K_{\cF_S}+B_S + \bM^S_{S^\nu} := (K_{\cF} + B + \bM_X)|_{S^\nu},
    \]
    where $\cF_S$ is the restricted foliation on $S^\nu$ and $\bM^S = \bM|_{S^\nu}$ is the restricted b-divisor. 
    
    By Lemma~\ref{lemma:noninvcurveininvsurf}, $(S^\nu,\cF_S,B_S,\bM_S)$ is lc at the generic point of $C$, or equivalently, $\mu_C B_S \leq 1$. Here we consider $C$ as a curve on $S^\nu$. Then there exists $t \geq 0$ such that $B_S + tC = \Delta + C$, where $\Delta \geq 0$ and $C $ is not contained in $\Supp \Delta$. Note that $S$ is $\cF$-invariant and $C \subset S$. By the assumption of $T$, $S$ is not contained in $\Supp T$. In particular, $T|_{S^\nu}$ is effective.  Since $ T|_{S^\nu} \cdot C <0$ on $S^\nu$, we have $\mu_C(T|_{S^\nu})>0$ and $C^2<0$. Thus
    \begin{align*}
        (&K_{\cF_S} + \Delta + C+ \bM_{S^\nu}^S)\cdot C  \\ =  (&K_{\cF_S} + B_S +tC+ \bM_{S^\nu}^S)\cdot C \\ =(&K_\cF + B + \bM_X) \cdot C+ tC^2 <0,
    \end{align*}
    which contradicts to (1) of Proposition~\ref{prop:surfnum}.
\end{proof}

\subsection{Cone theorem} We give a lemma that provides an estimate for the length of extremal rays in the cone theorem. Using foliation adjunction, we may obtain a more precise bound compared to the classical case.

\begin{lemma}\label{lemma:step1}
Let $X$ be a normal projective $\Q$-factorial variety and $(X,\cF,B,\bM)$ be a rank 1 gfq on $X$. Let $C$ be a $(K_\cF + B + \bM_X) $-negative curve which is $\cF$-invariant. If $(X,\cF,B,\bM)$ is lc at the generic point of $C$, then $C$ is a rational curve with
\[
    0 < -(K_\cF + B + \bM_X) \cdot C \leq 2.
\]
\end{lemma}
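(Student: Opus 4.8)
The plan is to reduce to the cone theorem for surface numerical generalized foliated quadruples, Proposition~\ref{prop:conethmnumgfq}, by blowing up $C$ and restricting the quadruple to the exceptional divisor. I begin with reductions. Since $(K_\cF+B+\bM_X)\cdot C<0$ and $(X,\cF,B,\bM)$ is lc at the generic point of $C$, Corollary~\ref{coro:singinvcurve} forces $C\not\subset\Sing^+\cF$, so by \cite[Lemma 2.9]{CS20} the foliation $\cF$ is terminal, in particular canonical, at the generic point of $C$; and by Proposition~\ref{proposition:bdivoncurve}, $\bM_X\cdot C\ge0$. (Recall $X$ is $\Q$-factorial, so $K_\cF$, $B$, $\bM_X$ are all $\R$-Cartier.)

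Next I blow up and restrict. Let $b\colon Y\to X$ be the blow-up of $X$ along $C$ (passing to a suitable higher model to which $\bM$ descends, so that the relevant $\R$-Cartierness needed below holds; this does not affect the statements about $D$), let $\cF_Y=b^{-1}\cF$, and let $D$ be the component of the exceptional locus dominating $C$. By Lemma~\ref{lemma:inv.blowup} and \cite[Corollary III.i.4]{MP13}, $D$ is $\cF_Y$-invariant with $a(D,\cF)=0$ (the inequality $a(D,\cF)\ge0$ coming from canonicity at the generic point of $C$, and $a(D,\cF)\le0$ from Lemma~\ref{lemma:inv.blowup}). Writing $K_{\cF_Y}+B_Y+\bM_Y+\Gamma=b^*(K_\cF+B+\bM_X)$ with $B_Y\ge0$ and $\Gamma\ge0$, the lc hypothesis at the generic point of $C$ forces $a(D,\cF,B,\bM)=0$, hence $D$ meets neither $\Supp B_Y$ nor $\Supp\Gamma$. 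Since $\epsilon(D)=0$, foliation adjunction (Proposition~\ref{prop:subadj,primediv}) produces a surface num-gfq $(D^\nu,\cF_D,B_D,\bM^D)$ with
\[
K_{\cF_D}+B_D+\bM^D_{D^\nu}=(h\circ\iota)^*(K_\cF+B+\bM_X),
\]
where $\iota\colon D^\nu\to Y$ is the natural morphism and $h=b$ (or its composite with the higher model). Let $\pi\colon D^\nu\to C$ be the induced morphism. Then $K_{\cF_D}+B_D+\bM^D_{D^\nu}$ is numerically trivial on every curve contracted by $h\circ\iota$, in particular on every $\pi$-vertical curve, and is strictly negative on every $\pi$-horizontal curve, whose pushforward to $X$ is a positive multiple of $C$.

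Now I apply the surface cone theorem. The key claim is that the subcone $Z_{-\infty}$ of Proposition~\ref{prop:conethmnumgfq}, spanned by classes of prime divisors $P\subset D^\nu$ with $\mu_P B_D>\epsilon(P)$, is generated by $\pi$-vertical divisors: a $\pi$-horizontal such $P$ would make $(D^\nu,\cF_D,B_D,\bM^D)$ non-lc at its generic point, which maps to the generic point of $C$, contradicting — via foliated inversion of adjunction — that $(X,\cF,B,\bM)$ is lc at the generic point of $C$. Granting this, $Z_{-\infty}$ pairs to zero with $K_{\cF_D}+B_D+\bM^D_{D^\nu}$; since a $\pi$-horizontal irreducible curve pairs strictly negatively while $\NE^M(D^\nu)_{K_{\cF_D}+B_D+\bM^D_{D^\nu}\ge0}+Z_{-\infty}$ pairs nonnegatively, Proposition~\ref{prop:conethmnumgfq} yields a $(K_{\cF_D}+B_D+\bM^D_{D^\nu})$-negative extremal ray, generated by a rational curve $C_j$ tangent to $\cF_D$ with $0<-(K_{\cF_D}+B_D+\bM^D_{D^\nu})\cdot C_j\le2$. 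Being negative against $K_{\cF_D}+B_D+\bM^D_{D^\nu}$, $C_j$ is $\pi$-horizontal, so $(h\circ\iota)_*C_j=m\,C$ for some integer $m\ge1$; hence $m\,(K_\cF+B+\bM_X)\cdot C=(K_{\cF_D}+B_D+\bM^D_{D^\nu})\cdot C_j\in[-2,0)$. In particular $C$ is dominated by the rational curve $C_j$, so $C$ is rational, and $0<-(K_\cF+B+\bM_X)\cdot C=\tfrac1m\bigl(-(K_{\cF_D}+B_D+\bM^D_{D^\nu})\cdot C_j\bigr)\le\tfrac2m\le2$.

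The main obstacle is the key claim that $Z_{-\infty}$ is $\pi$-vertical, that is, controlling the adjunction coefficients $\mu_P B_D$ along $\pi$-horizontal divisors; this is an inversion-of-adjunction statement for generalized foliated quadruples, which I would prove exactly as in the proof of Lemma~\ref{lemma:noninv}: passing to an index-one cover so that $K_\cF$ is Cartier, using that $\cF$ is locally algebraically integrable near the generic point of $C$ (since $C\not\subset\Sing^+\cF$, via \cite[Lemma 1.2.1]{BM16}), and invoking \cite[Theorem 6.6.1]{CHLX23}. A secondary, routine point is the bookkeeping required to apply Proposition~\ref{prop:subadj,primediv} — choosing the birational model so that the relevant divisor class is $\R$-Cartier and $B_Y+\Gamma\ge0$ near $D$ while keeping $D$ invariant of zero discrepancy.
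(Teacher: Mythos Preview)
Your approach is essentially correct but takes a genuinely different, and much more elaborate, route than the paper. The paper's proof is five lines: after observing (as you do) that $C\not\subset\Sing^+\cF$ and $\bM_X\cdot C\ge0$, it notes via Remark~\ref{rmk:Clccenter} that $C$ is an lc center of $(X,\cF)$, hence $C\not\subset\Supp B$ and $B\cdot C\ge0$; then it applies one-dimensional foliation adjunction directly to the invariant curve \cite[Proposition-Definition~3.12]{CS24}, which gives $K_\cF|_{C^\nu}=K_{C^\nu}+\Delta$ with $\Delta\ge0$, and the chain
\[
0<-(K_\cF+B+\bM_X)\cdot C\le -K_\cF\cdot C\le -\deg K_{C^\nu}\le 2
\]
finishes immediately, with rationality of $C$ falling out from $\deg K_{C^\nu}<0$.

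By contrast, you blow up $C$, restrict to the exceptional divisor, and invoke the full surface cone theorem (Proposition~\ref{prop:conethmnumgfq}). This works, and your handling of $Z_{-\infty}$ via the adjunction argument of Lemma~\ref{lemma:noninv} is sound (note this is the easy direction of adjunction, not inversion: lc on the ambient implies lc on the restriction). The payoff of the paper's approach is that the curve-adjunction formula from \cite{CS24} packages exactly the needed positivity and avoids all the bookkeeping you flag as secondary obstacles---the choice of model where $\bM$ descends, the $\R$-Cartierness needed for Proposition~\ref{prop:subadj,primediv}, and the control of $Z_{-\infty}$. Your route, on the other hand, is self-contained within the machinery already developed in the paper and does not require importing the curve-adjunction statement from \cite{CS24}, so it has some value as an alternative; but for this lemma it is substantially heavier than necessary.
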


\begin{proof}
    By Corollary~\ref{coro:singinvcurve}, $C$ is not contained in $\Sing^+\cF$. By Remark~\ref{rmk:Clccenter}, $C$ is a log canonical center of $(X,\cF)$, then  $C$ is not contained in the support of $B$ as $(X,\cF,B)$ is lc at the generic point of $C$. In particular, $B \cdot C \geq 0$. By Lemma~\ref{proposition:bdivoncurve}, $\bM_X \cdot C \geq 0 $. Then by \cite[Proposition-Definition 3.12]{CS24}, if $C^\nu \to C$ be the normalization of $C$, then $K_\cF|_{C^\nu} = K_{C^\nu} +\Delta$ for some $\Delta \geq 0$. Therefore
    \begin{align*}
        0 < -(K_\cF + B + \bM_X) \cdot C \leq -K_\cF \cdot C \leq -\deg_{C^\nu} K_{C^\nu} \leq 2.
    \end{align*}
    In particular, $C$ is a rational curve.
\end{proof}

In the end of this section we give the proof of the cone theorem for generalized foliated quadruples on threefolds of rank 1 assuming the ambient variety is $\Q$-factorial.

\begin{theorem}\label{theorem:conethmgfq,Q-fac}
    
    Let $X$ be a normal projective $\Q$-factorial threefold and $(X,\cF,B,\bM)$ be a rank 1 gfq on $X$. Let $Z_{-\infty}$ be the subcone of $\NE(X)$ spanned by the images of $\NE(W) \to \NE(X)$ where $W$ is a non-lc centre of $(X,\cF,B,\bM)$. Let $\{R_j\}_{j \in \Lambda}$ be the set of all $(K_\cF + B + \bM_X)$-negative extremal rays not contained in \( Z_{-\infty}\) in $\NE(X)$. Then the following hold:
    \begin{enumerate}
        \item[(1)] We may write 
        \[
            \NE (X) = \NE(X)_{K_\cF + B + \bM_X\geq 0} + Z_{-\infty} + \sum_{j \in \Lambda} R_j,
        \]
        \item[(2)] For each $j \in \Lambda$, $R_j$ is spanned by a rational curve $C_j$ which is tangent to $\cF$, and 
        \[
            0 < -(K_\cF + B + \bM_X) \cdot C_j \leq 2.
        \]
        \item[(3)] For any ample $\R$-divisor $A$ on $X$, the set 
        \[
        \Lambda_A := \{\,j \in \Lambda \mid R_j \subset \NE(X)_{K_\cF + B +\bM_X + A<0} \,\}
        \]
        is finite. In particular, the $(K_\cF + B + \bM_X)$-negative extremal rays that are not contained in $Z_{-\infty}$ are locally discrete.
    \end{enumerate}
\end{theorem}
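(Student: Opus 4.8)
The plan is to reduce everything to a single $(K_\cF+B+\bM_X)$-negative \emph{exposed} ray and then assemble the statement by convex geometry. Precisely, I would first show: if $R$ is a $(K_\cF+B+\bM_X)$-negative exposed ray of $\NE(X)$ with $R\not\subseteq Z_{-\infty}$, then $R$ is generated by a rational curve $C$ tangent to $\cF$ with $0<-(K_\cF+B+\bM_X)\cdot C\le 2$. Since $X$ is $\Q$-factorial, $N_1^M(X)_\R=N_1(X)_\R$ and $K_\cF,B,\bM_X$ are all $\R$-Cartier; moreover $\bM_X$ is the pushforward to $X$ of a nef divisor, hence pseudo-effective, so $\bM_X\cdot D\cdot D'\ge 0$ for nef $\R$-divisors $D,D'$. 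Given such an $R$, the first step is to choose a nef $\R$-Cartier divisor $H_R$ supporting $R$ with $A:=H_R-(K_\cF+B+\bM_X)$ \emph{ample}: starting from any supporting function of $R$ and rescaling by a large constant, Kleiman's criterion together with a compactness argument on $\NE(X)$ makes the difference positive on $\NE(X)\setminus\{0\}$. (If $\rho(X)=1$ one has $H_R\equiv 0$ and $A=-(K_\cF+B+\bM_X)$, which is then ample.)

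In the \emph{big case} $H_R^3>0$ the divisor $H_R$ is nef and big but not ample, so by Nakamaye's theorem there is a curve $C$ with $H_R\cdot C=0$ (any curve in $\mathrm{Null}(H_R)=\mathbb{B}_+(H_R)$, which is nonempty since $H_R$ is not ample), whence $[C]\in R$. Writing $H_R\sim_\R A'+E$ with $A'$ ample and $E\ge 0$ gives $C\subseteq\Supp E$ and $E\cdot C=-A'\cdot C<0$. Since $R\not\subseteq Z_{-\infty}$, the quadruple $(X,\cF,B,\bM)$ is lc at the generic point of $C$ (otherwise $C$ would be a non-lc centre and $[C]\in Z_{-\infty}$). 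If $C$ were not $\cF$-invariant, Lemma~\ref{lemma:noninv} with $T=E$ would give $(K_\cF+B+\bM_X)\cdot C\ge 0$, a contradiction; so $C$ is $\cF$-invariant, and Corollary~\ref{coro:singinvcurve} shows $C\not\subseteq\Sing^+\cF$. Then Lemma~\ref{lemma:step1} yields that $C$ is a rational curve with $0<-(K_\cF+B+\bM_X)\cdot C\le 2$, and it generates $R$.

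In the \emph{non-big case} $H_R^3=0$ I would invoke the foliated bend-and-break lemma, Theorem~\ref{thm:B&B}. Set $D_1=H_R$ and pick $D_2,D_3\in\{H_R,A\}$ with $D_1\cdot D_2\cdot D_3=0$ (possible in all subcases: $D_2=D_3=H_R$ if $H_R^2\not\equiv 0$; $D_2=H_R,\ D_3=A$ if $H_R^2\equiv 0$ but $H_R\not\equiv 0$; $D_2=D_3=A$ if $H_R\equiv 0$). Using $K_\cF+B=H_R-A-\bM_X$, $\bM_X\cdot D_2\cdot D_3\ge 0$ and $D_1\cdot D_2\cdot D_3=0$, a one-line computation gives $-(K_\cF+B)\cdot D_2\cdot D_3=A\cdot D_2\cdot D_3+\bM_X\cdot D_2\cdot D_3>0$ (here $D_2\cdot D_3$ is a non-trivial curve class, since a nonzero nef divisor cannot be orthogonal to an ample class to the relevant power), and hence $-K_\cF\cdot D_2\cdot D_3>0$ as well. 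Applying Theorem~\ref{thm:B&B} with $N=D_1$ produces, through a general point of $X$, a rational curve $C$ tangent to $\cF$ with $H_R\cdot C=0$; thus $[C]\in R$ and $C$ generates $R$. As $C$ meets a general point, it avoids the (proper, closed) non-lc locus, so $(X,\cF,B,\bM)$ is lc at the generic point of $C$, and $C$, being tangent to the rank-$1$ foliation, is $\cF$-invariant, so Lemma~\ref{lemma:step1} again gives $0<-(K_\cF+B+\bM_X)\cdot C\le 2$. (This case covers $\rho(X)=1$.)

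It remains to assemble the three assertions. For (3), given ample $A_0$, after enlarging $\Lambda_{A_0}$ I may assume $A_0=\epsilon\sum_{k=1}^{\rho}A_k$ with $A_1,\dots,A_\rho$ ample Cartier divisors forming a basis of $N^1(X)_\R$, and—exposed rays being dense among extremal ones (\cite[Theorem 18.6]{Roc70})—that every $R_j$ with $j\in\Lambda_{A_0}$ is exposed; the two cases above give generators $C_j$ with $-(K_\cF+B+\bM_X)\cdot C_j\in(0,2]$, and choosing supporting functions $H_j=(K_\cF+B+\bM_X+A_0)+L_j$ with $L_j$ ample, $0=H_j\cdot C_j$ forces $A_0\cdot C_j\le 2$, so each $A_k\cdot C_j$ is a positive integer with $\sum_k A_k\cdot C_j\le 2/\epsilon$; as the $A_k$ form a basis this confines $[C_j]$ to a finite set, so $\Lambda_{A_0}$ is finite. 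Local discreteness follows since any compact subset of $\NE(X)_{K_\cF+B+\bM_X<0}$ lies in $\NE(X)_{K_\cF+B+\bM_X+A<0}$ for a sufficiently small ample $A$. Next, a $(K_\cF+B+\bM_X)$-negative extremal ray not in $Z_{-\infty}$ that is not exposed would be a limit of exposed rays which are eventually $(K_\cF+B+\bM_X+A)$-negative and (as $Z_{-\infty}$ is closed) eventually outside $Z_{-\infty}$, hence lie among the finitely many rays of (3)—forcing the ray to be exposed after all; so (2) holds for every $R_j$. Finally, every exposed ray of $\NE(X)$ either pairs non-negatively with $K_\cF+B+\bM_X$, or lies in $Z_{-\infty}$, or is some $R_j$, and (1) follows from this together with (3) by the standard argument of the classical cone theorem (cf.\ \cite[Theorem 3.7]{KM98} and the proof of Proposition~\ref{prop:conethmnumgfq}). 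The main obstacle, to my mind, is the non-big case: arranging the auxiliary nef divisors so that foliated bend-and-break produces a curve \emph{in the ray} $R$, and, in both cases, converting the tangency (resp.\ incidence) output into the $\cF$-invariance required to invoke Lemmas~\ref{lemma:noninv}, \ref{coro:singinvcurve}, \ref{lemma:step1}; the closedness in (1) is the usual technical nuisance.
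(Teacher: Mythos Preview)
Your overall strategy matches the paper's: reduce to $(K_\cF+B+\bM_X)$-negative \emph{exposed} rays, split according to whether the supporting function $H_R$ is big, and then assemble (1)--(3) by convex geometry. Your non-big case (bend-and-break with $D_1=H_R$) and your finiteness argument for (3) are essentially the same as the paper's Steps~1 and~3.

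The genuine gap is in the big case. Nakamaye's theorem gives $\mathbb{B}_+(H_R)=\mathrm{Null}(H_R)\neq\emptyset$, but it does \emph{not} hand you a curve $C$ with $H_R\cdot C=0$: the null locus may consist only of surface components $V$ with $(H_R|_V)^2=0$, and a curve lying inside such a $V$ need not satisfy $H_R\cdot C=0$ (think of an irrational nef class of square zero on an abelian surface, which is strictly positive on every curve). In other words, you are assuming that the exposed ray $R$ is already generated by a curve---but that is precisely what has to be established here, and it is not a formal consequence of $H_R$ being nef, big, and non-ample.

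The paper fills this gap by choosing a prime divisor $T$ with $T\cdot R<0$ (from a decomposition $H_R\sim_\R H+D+tT$), noting that $R$ then lies in the image of $\NE^M(T^\nu)\to\NE(X)$, and lifting $R$ to an extremal ray $R_S$ on the normal surface $S=T^\nu$ via Lemma~\ref{lemma:ExtRayPullback}. Divisorial adjunction (Proposition~\ref{prop:subadj,primediv}) is then invoked: if $T$ is $\cF$-invariant one obtains a surface num-gfq on $S$ and the surface cone theorem (Proposition~\ref{prop:conethmnumgfq}(4)) supplies a curve generating $R_S$; if $T$ is not $\cF$-invariant the restriction is $B_S+\bM^S_S$ with $\bM^S_S$ nef, so $B_S\cdot R_S<0$ and a component of $B_S$ generates $R_S$ on the surface. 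Pushing that curve to $X$, one finishes exactly as you do with Lemmas~\ref{lemma:noninv} and~\ref{lemma:step1}. So the machinery of Section~\ref{section:normalsurfaces}---which you cite only for the final assembly---is precisely what your big-case argument is missing.
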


\begin{proof}
    \textit{Step 1.} Let $R$ be a $(K_\cF + B + \bM_X)$-negative exposed ray, not contained in \( Z_{-\infty} \). In the first two steps we prove that $R$ is generated by an $\cF$-invariant rational curve $C$ with $0<-(K_\cF + B + \bM_X)\cdot C \leq 2$. Let $H_R$ be a supporting function to $R$. In particular, $H_R$ is nef and not numerically trivial. By \cite[Lemma 8.4.1]{CHLX23}, we may assume that there exists an ample $\R$-divisor $A$ such that $H_R = K_\cF + B + \bM_X + A$ is the supporting function of $R$. 
    
    In this step we deal with the case when $H_R$ is not big. Then there exists an integer $\nu \in \{0,1,2\} $ such that 
\[
H_R^\nu \cdot A^{3-\nu} > H_R^{\nu + 1} \cdot A^{2-\nu} = 0.
\]
Let $D_i = H_R$ for $1 \leq i \leq \nu + 1$, $D_i = A$ for $\nu + 1 < i \leq 3$. Then $D_1 \cdot D_2 \cdot D_3 = H_R^{\nu + 1} \cdot A^{2-\nu } = 0$, and 
\begin{align*}
    -(K_\cF + B + \bM_X) \cdot D_2 \cdot D_3 &= (A - H_R)\cdot H_R^\nu \cdot A^{2-\nu } \\&= H_R^\nu \cdot A^{3-\nu} - H_R^{\nu + 1} \cdot A^{2-\nu} > 0.
\end{align*}
Since $\bM_X$ is pseudo-effective, $\bM_X \cdot D_2 \cdot D_3 \geq 0$. Thus $-(K_\cF + B) \cdot D_2 \cdot D_3 >0 $. Let $N = H_R + A = 2H_R - (K_\cF + B + \bM_X)$, $N$ is an ample $\R$-divisor on $X$.
By Theorem~\ref{thm:B&B} (\cite[Corollary 2.28]{Spi20}), there exists a rational curve $C$ through a general point of $X$, such that $C$ is tangent to $\cF$, and $H_R \cdot C = 0$. In particular, $R$ is generated by $[C]$. 
We may assume $(X,\cF,B,\bM)$ is lc at the generic point of $C$, then by Lemma~\ref{lemma:step1}, $0< -(K_\cF + B + \bM_X) \cdot C \leq 2$.

\medskip

\textit{Step 2.} In this step we deal with the case when $ H_R $ is big. We may write $H_R \sim_\R H + D + tT$ where $H$ is an ample $\R$-divisor, $D \geq 0$ is an $\R$-divisor, $t>0$ and $T$ is a prime divisor not contained in $\Supp D$ such that $T$ is negative along $R$. Let $\nu: S = T^\nu \to T$ be the normalization of $T$.

Assume $T$ is not $\cF$-invariant. Since $T$ is negative along $R$, $R$ is contained in the image of $ \NE(T) \to \NE(X) $. Thus we have that $\mu_T B \leq 1$, since otherwise $T$ would be an non-lc center of $(X,\cF,B,\bM)$ and $R$ would be contained in $Z_{-\infty}$. Let $\alpha = 1-\mu_T B\geq 0$. Then $R$ is also $(K_\cF + B + \alpha T +\bM_X)$-negative. By Proposition~\ref{prop:subadj,primediv}, the restricted foliation $\cF_S$ on $S$ is the foliation by points, and we may write
\[
(K_\cF + B + \alpha T + \bM_X)|_{S}\sim_\R B_S + \bM^S_{S},
\]
where $B_S \geq 0 $ and $\bM^S = \bM|_{T^\nu}$ is the restricted b-divisor of $\bM$ on $ S = T^\nu$. Note that $R$ is contained in the image $\iota: \NE^M(S) \to \NE (X)$. Then by Lemma~\ref{lemma:ExtRayPullback}, there exists a $(B_S + \bM^S_{S})$-negative extremal ray $ R_S \subset \NE^M(S) $ such that $ \iota(R_S) = R$. By Lemma~\ref{lemma:bnefonsurface}, $\bM^S_S$ is nef on $S$, then $B_S$ is negative along $R_S$. Then there exists an irreducible curve $C_S$ contained in $\Supp B_S$ such that $C_S$ is negative along $R_S$. Since $R$ is extremal, $R_S$ is generated by $C_S$, and $R$ is generated by $C = \nu(C_S)$. We may assume $(X,\cF,B,\bM)$ is lc at the generic point of $C$. By Corollary~\ref{coro:singinvcurve}, $C$ is not contained in $\Sing^+ \cF$. Note that $T \cdot C <0$, then by Lemma~\ref{lemma:noninv}, $C$ is $\cF$-invariant. By Lemma~\ref{lemma:step1}, $C$ is rational with $0< -(K_\cF + B + \bM_X) \cdot C \leq 2$.

Now we assume $T$ is $\cF$-invariant. Note that $T$ is not contained in $\Supp B$, since otherwise $T$ would be an non-lc center of $(X,\cF,B,\bM)$ and $R$ would be contained in $Z_{-\infty}$. Thus by Proposition~\ref{prop:subadj,primediv}, we may write
\[
(K_\cF + B + \bM_X )|_{S} \sim_\R K_{\cF_S} + B_S + \bM^{S}_{S},
\]
where $\cF_S$ is the restricted foliation and $B_S \geq 0$. In particular, $(S,\cF_S,B_S,\bM^S)$ is a surface num-gfq. By Lemma~\ref{lemma:ExtRayPullback}, there exists a $(K_{\cF_S} + B_S + \bM_{S}^S)$-negative extremal ray $R_S$ whose image along $\NE^M(S) \to \NE(X)$ is $R$. Apply Proposition~\ref{prop:conethmnumgfq} we conclude that $R_S$ is generated by an irreducible curve $C_S$. Then $R$ is also generated by an irreducible curve $C = \nu(C_S)$. 
We may assume $(X,\cF,B,\bM)$ is lc at the generic point of $C$. By Corollary~\ref{coro:singinvcurve}, $C$ is not contained in $\Sing^+ \cF$. If $C$ is not $\cF$-invariant, by Lemma~\ref{lemma:noninvcurveininvsurf}, we have $\mu_{C_S} B_S \leq 1$. By Proposition~\ref{prop:conethmnumgfq} (1), after replacing $C_S$ we may assume $C_S$ is $\cF_S$-invariant. Hence we may assume that $C$ is $\cF$-invariant. By Lemma~\ref{lemma:step1}, $C$ is rational with $0< -(K_\cF + B + \bM_X) \cdot C \leq 2$.

\medskip

\textit{Step 3.} In this step we prove (3). The proof is similar to the proof of Proposition~\ref{prop:conethmnumgfq}. Assume that for some ample $\R$-divisor $A$ on $X$, $\Lambda_A$ is an infinite set. Let $A_1,\dots,A_{\rho}$ be ample Cartier divisors on $X$ such that $A_1 ,\dots,A_{\rho}$ form a basis for $N^1(X)_\R$. We may assume $A =\epsilon \sum_{k=1}^{\rho} A_k$ for a sufficiently small $\epsilon>0$.

By \cite[Theorem 18.6]{Roc70}, we may shrink $\Lambda_A$ to a countable infinite subset and assume that for each $j \in \Lambda_A$, $R_j $ is exposed. Let $K = K_\cF + B + \bM_X$. By the previous steps, each $R_j$ is generated by a curve $C_j$ with $K\cdot C_j \in [-2,0)$. Since for each $j\in \Lambda_A$, $R_j$ is also $(K + A)$-negative exposed ray, there exists an ample $\R$-divisor $L_j$ such that $H_j = K + A+L_j$ is a supporting function of $R_j$. Then 
\[
0 = H_j \cdot C_j = (K + A+L_j)\cdot C_j \geq K \cdot C_j + A \cdot C_j \geq -2 + \epsilon \sum_{k=1}^{\rho} A_k \cdot C_j,
\]
then $A_k \cdot C_j \leq \frac{2}{\epsilon}$. Since $A_k$ is Cartier, $A_k\cdot C_j\in \Z$. By shrinking $\Lambda_A$ we may assume that $A_k \cdot C_j$ is independent of $j$ for each $k$. Thus for every $j ,j' \in \Lambda_A$, $C_j \equiv C_j'$, which leads to a contradiction. (3) follows. Moreover, following the same proof of Proposition~\ref{prop:conethmnumgfq}, we may see that for every $j \in \Lambda$, $R_j$ is an exposed ray. Then by Step 1 and 2, (1) (2) follows.
\end{proof}

\begin{remark}\label{rmk:WhyCS24notwork}
    It is worth noting that \cite[Theorem 4.8]{CS24} establishes a version of foliated cone theorem for a normal projective $\Q$-factorial variety $X$ and a foliated triple $(X,\cF,B)$ of rank $1$ in arbitrary dimension. One of the key ingredients in the proof of \cite[Theorem 4.8]{CS24} is the adjunction formula along higher-codimensional foliated invariant centers (\cite[Proposition-Definition 3.12]{CS24}). One may naturally expect an analogous version to hold for gfqs of rank 1 in arbitrary dimension, but our techniques fail in the arbitrary-dimensional case. The key difficulty is that it is not clear how to define the analogue of ``restricted b-divisors'' (as in Definition~\ref{def:Restrictedbdiv}) along higher-codimensional foliated invariant centers. Although we only have divisorial adjunctions for generalized foliated structures, which are not sufficient for the arbitrary-dimensional case, they are sufficient enough for the threefold case considered in this paper.
\end{remark}

\begin{corollary}\label{coro: afterconethm,extray}
    Let $X$ be a normal projective $\Q$-factorial threefold and let $(X,\cF,B,\bM)$ be an lc gfq of rank $1$ on $X$. Assume that $K_\cF + B + \bM_X$ is not nef and there exists an ample $\R$-divisor $A$ such that $K_\cF + B + \bM_X + A$ is nef. Let 
    \[
    \lambda := \inf \{\, t > 0 \mid K_\cF + B + \bM_X + tA \text{ is nef}\ \},
    \]
    then there are only finitely many $(K_\cF + B + \bM_X)$-negative extremal rays $R_1,\dots ,R_k$ such that for every $1 \leq i \leq k$, $R_i$ is generated by a curve $C_i$ and $(K_\cF + B + \bM_X + \lambda A) \cdot C_i = 0$.
\end{corollary}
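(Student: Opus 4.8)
The plan is to deduce this directly from part (3) of Theorem~\ref{theorem:conethmgfq,Q-fac} after a small perturbation of the nef threshold $\lambda$. Since $(X,\cF,B,\bM)$ is lc, it has no non-lc centre, so $Z_{-\infty} = 0$ in the notation of Theorem~\ref{theorem:conethmgfq,Q-fac}, and the index set $\Lambda$ there is exactly the set of \emph{all} $(K_\cF + B + \bM_X)$-negative extremal rays of $\NE(X)$. First I would record some elementary properties of $\lambda$. The set of $t \in \R$ for which $K_\cF + B + \bM_X + tA$ is nef is closed, being the preimage of the (closed) nef cone under the continuous affine map $t \mapsto K_\cF + B + \bM_X + tA$ into $N^1(X)_\R$; hence $\lambda > 0$ because $K_\cF + B + \bM_X$ is not nef, $\lambda \le 1$ because $K_\cF + B + \bM_X + A$ is nef, and the infimum defining $\lambda$ is attained, so $L := K_\cF + B + \bM_X + \lambda A$ is nef.

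Next I would fix any $\delta$ with $0 < \delta < \lambda$, which is possible since $\lambda > 0$, and note that $(\lambda - \delta)A$ is an ample $\R$-divisor. Suppose $R$ is a $(K_\cF + B + \bM_X)$-negative extremal ray generated by a curve $C$ with $L \cdot C = 0$. Then $(K_\cF + B + \bM_X)\cdot C = -\lambda\,(A\cdot C)$, and since the left-hand side is negative and $\lambda > 0$ we obtain $A \cdot C > 0$. Consequently
\[
\big(K_\cF + B + \bM_X + (\lambda-\delta)A\big)\cdot C = L\cdot C - \delta\,(A\cdot C) = -\delta\,(A\cdot C) < 0,
\]
so $R \subset \NE(X)_{K_\cF + B + \bM_X + (\lambda - \delta)A < 0}$; that is, $R = R_j$ for some $j$ in the index set $\Lambda_{(\lambda - \delta)A}$ appearing in Theorem~\ref{theorem:conethmgfq,Q-fac}(3) (applied with the ample divisor $(\lambda-\delta)A$ in place of $A$). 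Since $R$ was an arbitrary ray of the kind considered in the statement, all such rays lie among $\{R_j : j \in \Lambda_{(\lambda-\delta)A}\}$, which is finite by that theorem; this gives the desired conclusion.

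There is essentially no hard step here, as all the substantive work is already contained in Theorem~\ref{theorem:conethmgfq,Q-fac}. The only points needing (minor) care are: checking that $\Lambda$ is genuinely the full collection of $(K_\cF+B+\bM_X)$-negative extremal rays, which is where the lc hypothesis is used to make $Z_{-\infty}$ vanish; verifying that the infimum defining $\lambda$ is attained so that $L$ is actually nef; and choosing the perturbation so that the divisor $(\lambda-\delta)A$ used to invoke part (3) is genuinely ample, which is exactly why one needs $0 < \delta < \lambda$ and hence $\lambda > 0$.
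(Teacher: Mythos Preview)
Your proof is correct and follows essentially the same approach as the paper: both perturb the nef threshold to $K_\cF + B + \bM_X + (\lambda-\delta)A$ (the paper takes $\delta = \lambda/2$) and invoke the finiteness in Theorem~\ref{theorem:conethmgfq,Q-fac}(3). Your write-up is in fact a bit more careful in spelling out that the lc hypothesis forces $Z_{-\infty}=0$ and that the infimum defining $\lambda$ is attained, points the paper leaves implicit.
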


\begin{proof}
    By the assumption, we have $\lambda >0$ and $K_\cF + B + \bM_X + \frac{\lambda}{2} A$ is not nef. By Theorem~\ref{theorem:conethmgfq,Q-fac}, there are only finitely many $(K_\cF + B + \bM_X + \frac{\lambda}{2}A) $-negative extremal rays $R_1, \dots, R_n$. Moreover, for any $ i = 1,2 \dots, n$, there exists a curve $C_i$ such that $R_i = \R_{\geq 0} [C_i]$. By the definition of $\lambda$, we have
    \[
    \lambda = \max_{1\leq i \leq n} \frac{-(K_\cF + B + \bM_X)\cdot C_i}{A\cdot C_i}.
    \]
    In particular, there exist only finitely many $C_j$ such that $(K_\cF + B + \bM_X + \lambda A)\cdot C_j = 0$.
\end{proof}

\section{Minimal Model Program}

In this section we establish the generalized foliated MMP on $\Q$-factorial threefolds and derive the following results as straightforward consequences of \cite{CS20}, Theorem~\ref{theorem:conethmgfq,Q-fac} and Proposition~\ref{proposition:bdivoncurve}. Most of the consequences about MMP for foliated triples of rank 1 on threefolds in \cite{CS20} can be generalized to the case of generalized foliated quadruples.

\begin{proposition}[cf. {\cite[Proposition 8.1]{CS20}}]\label{prop:simpleMMP}
    Let $X$ be a normal threefold, $U$ a normal projective variety, and $\pi: X \to U$ a projective morphism. Let $(X,\cF,B,\bM)$ be a lc gfq of rank $1$ on $X$. Assume that $\cF$ has simple singularities and $K_\cF + B + \bM_X$ is pseudo-effective over $U$. Then we may run a $(K_\cF + B +\bM_X)$-MMP over $U$ to get a minimal model $\phi: X \bir X'$ of $(X,\cF,B,\bM)$ over $U$. If $\cF' = \phi_* \cF$ and $B' = \phi_* B$, then the following holds:
    \begin{enumerate}
        \item[(1)] $\cF'$ has simple singularities.
        \item[(2)] $(X',\cF',B',\bM_{X'})$ is lc. 
        \item[(3)] If $\Theta\geq 0$  is a $\Q$-divisor on $X$ with $\cF$-invariant support such that $(X,\Theta)$ is lc, then $(X',\phi_*\Theta)$ is lc.
    \end{enumerate}
\end{proposition}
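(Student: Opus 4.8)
The plan is to run the $(K_\cF+B+\bM_X)$-MMP$/U$ one extremal contraction or flip at a time, reducing the existence of contractions, the existence of flips, termination, and properties (1) and (3) to the corresponding results for lc foliated triples of rank $1$ on threefolds in \cite{CS20}, and deducing (2) from a discrepancy computation. As preparation: since $\cF$ has simple singularities, $X$ is $\Q$-factorial klt by Theorem~\ref{thm:simpleprop}, so $K_\cF$, $B$ and $K_\cF+B$ are $\R$-Cartier; since $\bM$ is b-nef the negativity lemma gives $g^*\bM_X\ge\bM_Y$ on any model $g\colon Y\to X$ to which $\bM$ descends, whence $a(E,\cF,B)\ge a(E,\cF,B,\bM)\ge-\epsilon(E)$ for every divisor $E$ over $X$ and $(X,\cF,B)$ is an lc foliated triple. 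Along the MMP, $\bM$ is unchanged as a b-divisor and remains b-nef, while the ambient varieties stay $\Q$-factorial; hence each $(X_i,\cF_i,B_i,\bM)$ is again a gfq.

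Now the inductive step. Suppose $K_\cF+B+\bM_X$ is not nef$/U$. By the cone theorem Theorem~\ref{theorem:conethmgfq,Q-fac} (in its relative form over $U$), and since $(X,\cF,B,\bM)$ is lc so that $Z_{-\infty}=0$, there is a $(K_\cF+B+\bM_X)$-negative extremal ray $R/U$, generated by an $\cF$-invariant rational curve $C$. Since the whole quadruple is lc --- in particular at the generic point of $C$ --- Lemma~\ref{lemma:step1} gives $\bM_X\cdot C\ge0$, $C\not\subset\Supp B$ (so $B\cdot C\ge0$), and $0<-(K_\cF+B+\bM_X)\cdot C\le-K_\cF\cdot C\le2$; in particular $R$ is a $(K_\cF+B)$-negative --- indeed $K_\cF$-negative --- extremal ray on a $\Q$-factorial threefold whose foliation has simple singularities. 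Hence by \cite{CS20} the extremal contraction $c_R\colon X\to Y$ over $U$ exists, and it is divisorial or flipping, not a Mori fibre space, as otherwise $-(K_\cF+B+\bM_X)$ would be ample along the positive-dimensional fibres of $c_R$, contradicting that $K_\cF+B+\bM_X$ is pseudo-effective$/U$; if $c_R$ is flipping, its flip exists by \cite{CS20}. In either case \cite{CS20} shows the output is $\Q$-factorial with foliation again of simple singularities, giving (1); and since the sequence so constructed is a run of the $(K_\cF+B)$-MMP$/U$, it terminates by \cite{CS20}, while pseudo-effectivity$/U$, preserved under pushforward, prevents it from ending on a Mori fibre space, so we arrive at a minimal model $\phi\colon X\bir X'$ of $(X,\cF,B,\bM)$ over $U$.

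It remains to check (2) and (3). For (2), each step is $(K_\cF+B+\bM_X)$-negative, so by the negativity lemma applied to $K_\cF+B+\bM_X$ the discrepancies $a(E,\cF_i,B_i,\bM)$ are non-decreasing along the MMP (strictly increasing along divisors contracted or flipped); since $(X,\cF,B,\bM)$ is lc, so is each $(X_i,\cF_i,B_i,\bM)$, and in particular $(X',\cF',B',\bM_{X'})$ is lc. For (3), the MMP produced above is in particular a run of the $(K_\cF+B)$-MMP$/U$ for the lc foliated triple $(X,\cF,B)$, so part (3) of \cite[Proposition 8.1]{CS20} applies directly, using that $\Supp\Theta$ is $\cF$-invariant.

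The step I expect to be the crux is the first reduction: that every $(K_\cF+B+\bM_X)$-negative extremal ray is $(K_\cF+B)$-negative, which is precisely what makes the machinery of \cite{CS20} applicable. This rests on Proposition~\ref{proposition:bdivoncurve} (and hence on the lc hypothesis and the $\Q$-factoriality of $X$) via Lemma~\ref{lemma:step1}. Everything else is bookkeeping with discrepancies of gfqs together with direct citation of \cite{CS20}; the only other point requiring a word of care is that $\bM$ stays b-nef b-$\R$-Cartier along the MMP, so that the intermediate objects are genuinely gfqs.
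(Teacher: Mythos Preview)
Your proof is correct and follows essentially the same approach as the paper: reduce each step of the $(K_\cF+B+\bM_X)$-MMP to a $K_\cF$-negative (hence $(K_\cF+B)$-negative) extremal contraction via Proposition~\ref{proposition:bdivoncurve}/Lemma~\ref{lemma:step1}, then invoke \cite{CS20} for existence of contractions, flips, simple singularities on the output, termination, and property~(3), and use the negativity lemma for~(2).

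The one point that needs attention is your appeal to Theorem~\ref{theorem:conethmgfq,Q-fac} ``in its relative form over $U$'': that theorem is only stated and proved in the absolute case ($U$ a point), and a relative version is not established elsewhere in the paper. The paper handles this not by proving a relative cone theorem but by the standard reduction: for general $U$, take a sufficiently ample divisor $H$ on $U$, observe that $(X,\cF,B,\bM+\overline{\pi^*H})$ is still lc (since $\overline{\pi^*H}$ descends to $X$) with $K_\cF+B+\bM_X+\pi^*H$ pseudo-effective, and run the absolute $(K_\cF+B+\bM_X+\pi^*H)$-MMP; by the cone theorem every extremal ray contracted is $\pi^*H$-trivial, hence the MMP is automatically over $U$. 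This is a one-line fix, but you should either supply it or note that the reduction is standard, rather than assume an unproved relative statement.
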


\begin{proof}
    First we assmue that $U$ is a point. Since $\cF$ has simple singularities, by Theorem~\ref{thm:simpleprop}, $X$ is $\Q$-factorial klt. We may assume that $K_\cF + B +\bM_X$ is not nef. Then let $R$ be an $(K_\cF + B + \bM_X)$-negative extremal ray. By Theorem~\ref{theorem:conethmgfq,Q-fac}, $R$ is spanned by an $\cF$-invariant rational curve $C$. By Proposition~\ref{proposition:bdivoncurve}, $\bM_X \cdot C \geq 0$. By Remark~\ref{rmk:Clccenter}, $C$ is an lc center of $(X,\cF)$. Since $(X,\cF,B)$ is also lc, $C$ is not contained in $\Supp B$. In particular, $B \cdot C \geq 0 $. Thus $K_\cF \cdot C<0$.

    By \cite[Theorem 6.2]{CS20} and \cite[Theorem 6.5]{CS20}
    , the contraction or the flip associated to $R$ exists. Denote this step of MMP by $\psi: X\bir X_1$ and let $\cF_1 = \psi_*\cF$. Moreover, $\cF_1$ has simple singularities, and $(X_1, \psi_*\Theta)$ is lc. By the negativity lemma, $(X_1,\cF_1,B_1 = \phi_*B, \bM_{X_1})$ is lc. Note that $\bM_{X_1} = \psi_*\bM_X$, $K_{\cF_1} + B_1 + \bM_{X_1} $ is pseudo-effective. Replace $X,\cF,B$ by $X_1,\cF_1,B_1$, we may continue this process. 

    Since $K_\cF + B + \bM_X$ is pseudo-effective, the MMP constructed above will not terminate on a Mori fiber space. Since each step of the MMP is also a $K_\cF$-MMP, then by \cite[Theorem 7.1]{CS20}, the MMP constructed above terminates.

    In the general case, take a sufficiently ample divisor $H$ on $U$. Note that $(X,\cF,B,\bM+\overline{\pi^*H})$ is lc and that $K_\cF + B + \bM_X + \pi^*H$ is pseudo-effective. Thus, we may run a $(K_\cF + B + \bM_X + \pi^*H)$-MMP, which is automatically a $(K_\cF + B + \bM_X)$-MMP over $U$ by Theorem~\ref{theorem:conethmgfq,Q-fac}.
\end{proof}

\begin{lemma}[cf. {\cite[Lemma 8.3]{CS20}}]\label{lemma:non-can,discrep}
    Let $X$ be a normal projective threefold, and let $(X,\cF,B,\bM)$ be an lc gfq of rank $1$ on $X$. Let $E$ be a prime divisor exceptional over $X$ such that $a(E,\cF,B,\bM)<0$. Then $a(E,\cF,B,\bM) = -1$. 
\end{lemma}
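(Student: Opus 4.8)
The plan is to reduce this to a statement about \emph{foliation} discrepancies, where one can use the integrality of discrepancies for rank-one foliations on threefolds; the case $\bM=\overline{0}$ (with $K_\cF+B$ $\R$-Cartier) is essentially \cite[Lemma 8.3]{CS20}, and the strategy there should adapt. First, since $(X,\cF,B,\bM)$ is lc we have $a(E,\cF,B,\bM)\geq-\epsilon(E)$, and since $a(E,\cF,B,\bM)<0$ this forces $\epsilon(E)=1$; thus $-1\leq a(E,\cF,B,\bM)<0$, and it suffices to rule out $a(E,\cF,B,\bM)\in(-1,0)$, which I assume from now on for contradiction.

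Next I would set up a discrepancy decomposition on a model. Fix $f\colon Y\to X$ with $E\subset Y$, $\cF_Y:=f^{-1}\cF$, and $\bM$ descending to $Y$, and let $\beta_E$ and $\theta_E$ be the coefficients of $E$ in $f^*B$ and in $f^*\bM_X-\bM_Y$ respectively; then $\beta_E\geq0$ (the exceptional part of the effective divisor $f^*B$) and $\theta_E\geq0$ (negativity lemma, as $\bM$ is b-nef and descends to $Y$), and
\[
a(E,\cF,B,\bM)=a(E,\cF)-\beta_E-\theta_E,
\]
once $K_\cF$ is $\Q$-Cartier so that the right-hand side makes sense. This may be arranged, since the statement is local around $Z:=\cent_X(E)$ and invariant under crepant pullback, either by passing to a model with simple foliation singularities (Theorem~\ref{thm:simpleprop}(1)) or, if $Z$ is not a divisor, by a finite cover quasi-\'etale near the generic point $\eta$ of $Z$, exactly as in Step~3 of the proof of Proposition~\ref{proposition:bdivoncurve}, transferring the conclusion back via Lemma~\ref{lemma:GFQfinite}. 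One may then further assume $K_\cF$ is Cartier near $\eta$, so $a(E,\cF)\in\Z$; and $a(E,\cF)=a(E,\cF,B,\bM)+\beta_E+\theta_E\geq-1$, hence $a(E,\cF)\in\{-1,0,1,\dots\}$.

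The crux is to show $a(E,\cF)=-1$ and $\beta_E=\theta_E=0$; this yields $a(E,\cF,B,\bM)=-1$, the desired contradiction. Since $\epsilon(E)=1$, by \cite[Corollary III.i.4]{MP13} the foliation $\cF$ is not terminal at $\eta$, so $Z\subset\Sing^+\cF$. Using the local classification of singularities of rank-one foliations on threefolds (McQuillan--Panazzolo), a non-$\cF$-invariant exceptional divisor over $\eta$ arises from ``dicritical'' behaviour, for which one should check that its foliation discrepancy is already $\leq-1$ --- hence $=-1$ by integrality and the lc bound --- and that its centre $Z$ meets neither $\Supp B$ nor the $f$-exceptional support of $f^*\bM_X-\bM_Y$; the latter is the divisorial counterpart of the mechanism behind Proposition~\ref{proposition:bdivoncurve} and Remark~\ref{rmk:Clccenter}, and it forces $\beta_E=\theta_E=0$.

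I expect this last step to be the main obstacle. The real danger is that $a(E,\cF,B,\bM)$ could fall in $(-1,0)$ because of a genuine non-integral contribution of the divisor $B$ or of the defect $f^*\bM_X-\bM_Y$ of the b-divisor, rather than of the foliation itself; ruling this out requires the precise local structure of rank-one foliation singularities on threefolds together with a careful analysis of where non-invariant exceptional places of negative discrepancy can sit. Everything else --- the reduction to $\epsilon(E)=1$, the discrepancy decomposition, and the integrality input --- is either formal or already contained in the excerpt.
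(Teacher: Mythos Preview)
Your reduction to $\epsilon(E)=1$ and to the range $a(E,\cF,B,\bM)\in(-1,0)$ is fine, as is the discrepancy decomposition $a(E,\cF,B,\bM)=a(E,\cF)-\beta_E-\theta_E$ with $\beta_E,\theta_E\geq0$. But the crucial last step does not go through as sketched. From $\epsilon(E)=1$ and \cite[Corollary~III.i.4]{MP13} you only get that $\cF$ is \emph{not canonical} at $\eta$, i.e.\ that \emph{some} exceptional divisor over $\eta$ has negative foliation discrepancy; this says nothing about your particular $E$. There is no obstruction, in your setup, to having $a(E,\cF)\geq0$ (an integer) together with $\beta_E+\theta_E\in(a(E,\cF),a(E,\cF)+1)$, which would place $a(E,\cF,B,\bM)$ genuinely in $(-1,0)$. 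The appeal to ``local classification'' and ``dicritical behaviour'' is too vague to exclude this, and the analogy with Proposition~\ref{proposition:bdivoncurve}/Remark~\ref{rmk:Clccenter} does not apply: those concern invariant curves and lc centres of $(X,\cF)$, whereas here you would need to know that $\Supp B$ and the defect of $\bM$ avoid the centre of an arbitrary non-invariant exceptional divisor, which is simply false in general.

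The paper does not attempt to control $a(E,\cF)$, $\beta_E$, $\theta_E$ separately at all. Instead it passes to a model $f\colon Y\to X$ where $\cF_Y$ has simple singularities, $\bM$ descends, and $(Y,\cF_Y,B_Y+F)$ is lc with $F$ the reduced sum of non-$\cF_Y$-invariant $f$-exceptional divisors. Writing $K_{\cF_Y}+B_Y+\bM_Y+F_1=f^*(K_\cF+B+\bM_X)+F_2$ with $F_1,F_2\geq0$ exceptional and without common components, the assumption $a(E,\cF,B,\bM)\in(-1,0)$ gives $\mu_E F_1\in(0,1)$, so $E\subset\Supp F$ and one can choose $\epsilon>0$ with $(Y,\cF_Y,B_Y+F_1+\epsilon E,\bM)$ still lc. One then runs a $(K_{\cF_Y}+B_Y+F_1+\epsilon E+\bM_Y)$-MMP over $X$ using Proposition~\ref{prop:simpleMMP}; by the negativity lemma the minimal model must contract $E$, but \cite[Lemma~6.1]{CS20} forbids contracting a non-invariant divisor in this MMP, giving the contradiction. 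The point is that the MMP machinery (which the paper has just established for gfqs) packages exactly the geometric input that your integrality approach cannot access directly.
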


\begin{proof}
    Let $f: Y \to X$ be a birational morphism such that $E$ is a divisor on $Y$ and $\bM$ descends to $Y$. Let $\cF_Y = f^{-1}\cF$. By Theorem~\ref{thm:simpleprop}, after replacing by a higher resolution, we may assume that $\cF_Y$ has simple singularities. Let $B_Y = f^{-1}_*B$, then we may write 
    \[
    K_{\cF_Y} + B_Y + \bM_Y + F_1 = f^*(K_\cF + B + \bM_X) + F_2,
    \] 
    where $F_1, F_2 \geq 0$ are $f$-exceptional $\R$-divisors with no common components. After replacing with a higher resolution, we may assume that $(Y,\cF_Y ,B_Y+F) $ is lc (cf. \cite[Corollary, p. 282]{MP13}), where $F$ is the sum of all $f$-exceptional non-$\cF_Y$-invariant prime divisors.

    Now we assume that $a(E,\cF,B,\bM) \in (-1,0)$. Then $\mu_E F_1 \in (0,1)$. Since $(X,\cF,B,\bM)$ is lc, $E$ is not $\cF_Y$-invariant. Since $E$ is exceptional over $X$, $E$ is contained in $\Supp F$, then there exists an $\epsilon>0$ such that $(Y,\cF_Y,B_Y + F_1 +\epsilon E)$ is lc, and so is $(Y,\cF_Y,B_Y + F_1 + \epsilon E,\bM)$ since $\bM$ descends to $Y$. Then by Proposition~\ref{prop:simpleMMP}, we may run a $(K_{\cF_Y}+ B_Y + F_1 + \epsilon E + \bM_Y)$-MMP over $X$ to obtain a minimal model $\phi: Y \bir X'$ over $X$. By the negativity lemma, $\phi$ contracts $E$, which contradicts with \cite[Lemma 6.1]{CS20}.
\end{proof}

\begin{proposition}\label{prop:pltblowup}
    Let $X$ be a normal projective threefold, and let $(X, \cF, B, \bM)$ be an lc gfq of rank $1$ on $X$. Then there exists a birational morphism $\pi: X' \to X$, such that if $\cF' = \pi^{-1} \cF$ and $B' = \pi^{-1}_* B$, then:  
\begin{enumerate}
    \item[(1)] $\cF'$ has simple singularities.  
    \item[(2)] Let $E'$ be the sum of all $\pi$-exceptional prime divisors which are not $\cF'$-invariant, then we may write
    \[
    K_{\cF'} + B' + E' + \bM_{X'} = \pi^*(K_{\cF} + B + \bM_X).
    \]  
    In particular, $(X', \cF',B'+E',\bM)$ is lc.
    \item[(3)] $(X', \Theta)$ is lc, where $\Theta$ is the sum of all $\cF'$-invariant $\pi$-exceptional prime divisors.  
    \item[(4)] If $(X, \cF, B, \bM)$ has an lc center $P$, we may choose $\pi$ such that there exists a $\pi$-exceptional prime divisor $T$ on $X'$ centered on $P$. In particular, $\mu_T E' = \epsilon(T)$.
\end{enumerate}  

\end{proposition}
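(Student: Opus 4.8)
The plan is to build $\pi$ in two stages: first pass to a resolution on which the transformed foliation has simple singularities, and then run a relative MMP that contracts exactly those exceptional divisors which are not lc places of $(X,\cF,B,\bM)$. Concretely, I would first choose a birational morphism $f\colon Y\to X$, using Theorem~\ref{thm:simpleprop} together with the resolution theory for rank $1$ foliations on threefolds (cf.\ \cite[Corollary, p.~282]{MP13}, as in the proof of Lemma~\ref{lemma:non-can,discrep}), so that: $\cF_Y := f^{-1}\cF$ has simple singularities (hence $Y$ is $\Q$-factorial klt by Theorem~\ref{thm:simpleprop}); $\bM$ descends to $Y$; $(Y,\cF_Y,B_Y+F)$ is lc, where $B_Y := f^{-1}_*B$ and $F$ is the sum of all non-$\cF_Y$-invariant $f$-exceptional prime divisors; and $(Y,\Theta_Y)$ is lc, where $\Theta_Y$ is the sum of all $\cF_Y$-invariant $f$-exceptional prime divisors. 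For the ``moreover'' clauses I would further enlarge $Y$ so that it carries a prescribed prime divisor $D_0$ over $X$ centered on the non-canonical center $P$ with $a(D_0,\cF,B,\bM)<0$ (resp.\ a prime divisor $T_0$ over $X$ centered on the lc center $Q$ with $a(T_0,\cF,B,\bM)=-\epsilon(T_0)$). The delicate point here — and the one I expect to be the main obstacle — is arranging simple singularities \emph{and} the two log canonicity conditions on $Y$ \emph{simultaneously}: a priori the extra blow-ups required to make a pair log canonical could destroy simple singularities, and reconciling this is exactly what the resolution results of \cite{MP13,CS20} are needed for.

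Next, on $Y$ I would write
\[
K_{\cF_Y}+B_Y+F+\bM_Y = f^*(K_\cF+B+\bM_X)+G,
\]
and note that $G$ is $f$-exceptional with $\mu_E G = \epsilon(E)+a(E,\cF,B,\bM)\ge 0$ for every $f$-exceptional prime divisor $E$ (using that $(X,\cF,B,\bM)$ is lc), so $G\ge 0$; moreover $\Supp G$ consists precisely of those $f$-exceptional divisors that are \emph{not} lc places of $(X,\cF,B,\bM)$. Since $G\ge 0$ is $f$-exceptional, $K_{\cF_Y}+B_Y+F+\bM_Y$ is pseudo-effective over $X$, so by Proposition~\ref{prop:simpleMMP} I may run a $(K_{\cF_Y}+B_Y+F+\bM_Y)$-MMP over $X$, terminating on a minimal model $\phi\colon Y\bir X'$ over $X$. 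Let $\pi\colon X'\to X$ be the structural morphism, $\cF':=\phi_*\cF_Y=\pi^{-1}\cF$, and $B':=\phi_*B_Y=\pi^{-1}_*B$. By Proposition~\ref{prop:simpleMMP} the foliation $\cF'$ has simple singularities, which gives item (1).

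To finish items (2) and (3), I would push the displayed relation forward along $\phi$ to get $K_{\cF'}+\phi_*(B_Y+F)+\bM_{X'}=\pi^*(K_\cF+B+\bM_X)+\phi_*G$. The divisor $\phi_*G$ is effective, $\pi$-exceptional, and $\pi$-nef — $\pi$-nef because $\pi^*(K_\cF+B+\bM_X)$ is $\pi$-numerically trivial while $K_{\cF'}+\phi_*(B_Y+F)+\bM_{X'}$ is $\pi$-nef as the output of the MMP — so the negativity lemma forces $\phi_*G=0$. Hence every component of $G$ is contracted by $\phi$, so the $\pi$-exceptional prime divisors on $X'$ are exactly the lc places of $(X,\cF,B,\bM)$; in particular $\phi_*F$ equals $E'$, the sum of all non-$\cF'$-invariant $\pi$-exceptional prime divisors, and the displayed relation collapses to $K_{\cF'}+B'+E'+\bM_{X'}=\pi^*(K_\cF+B+\bM_X)$, which is item (2); log canonicity of $(X',\cF',B'+E',\bM)$ then follows from Proposition~\ref{prop:simpleMMP}. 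For item (3), applying Proposition~\ref{prop:simpleMMP} to $\Theta_Y$ yields $(X',\phi_*\Theta_Y)$ lc, and $\phi_*\Theta_Y$ is the sum of all $\cF'$-invariant $\pi$-exceptional prime divisors, i.e.\ the divisor called $\Theta$.

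Finally, for the ``moreover'' clauses: by Lemma~\ref{lemma:non-can,discrep} the chosen $D_0$ has $a(D_0,\cF,B,\bM)=-1$, and $\epsilon(D_0)=1$ by log canonicity, so $\mu_{D_0}G=0$ and $D_0\notin\Supp G$. Since each divisorial step of this ($G$-negative, over $X$) MMP contracts a component of $G$, the divisor $D_0$ survives, so $D:=\phi_*D_0$ is a non-$\cF'$-invariant $\pi$-exceptional prime divisor centered on $P$, whence $\mu_D E'=1$; this gives item (4). Item (5) is the same argument with $T_0$ in place of $D_0$: from $a(T_0,\cF,B,\bM)=-\epsilon(T_0)$ we get $T_0\notin\Supp G$, so $T:=\phi_*T_0$ is a $\pi$-exceptional prime divisor centered on $Q$, and $\mu_T E'=\epsilon(T)$ directly from the definition of $E'$.
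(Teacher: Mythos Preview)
Your proposal is correct and follows essentially the same route as the paper: pass to a simple-singularities model $Y$ on which $\bM$ descends and the relevant pairs are lc, write $K_{\cF_Y}+B_Y+F+\bM_Y=f^*(K_\cF+B+\bM_X)+G$ with $G\ge 0$ exceptional (your $F$ and $G$ are the paper's $E_1$ and $E_2$), run the $(K_{\cF_Y}+B_Y+F+\bM_Y)$-MMP$/X$ via Proposition~\ref{prop:simpleMMP}, and use the negativity lemma to see that exactly the components of $G$ get contracted. Your treatment of (4) and (5) via $\mu_{D_0}G=\mu_{T_0}G=0$ is likewise the same as the paper's, with slightly different bookkeeping.
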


\begin{proof}
    Let $f: Y \to X$ be a birational morphism such that $\bM$ descends to $Y$. Let $\cF_Y = f^{-1}\cF$. By Theorem~\ref{thm:simpleprop}, after replacing by a higher resolution, we may assume that $\cF_Y$ has simple singularities. Let $f^{-1}_*B = B_Y$, and let $E_1  = \sum_D \epsilon(D)D$, where the sum runs over all $f$-exceptional prime divisors on $Y$. We may write
    \[
    K_{\cF_Y} + B_Y + \bM_Y + E_1 = f^*(K_\cF + B + \bM_X)+E_2,
    \]
    where $ E_2\geq 0 $ is an $f$-exceptional $\R$-divisor. After passing to a higher resolution, we may assume that $(Y,\cF_Y, B_Y + E_1)$ is lc, and $(Y, F)$ is lc where $F$ is the sum of all $\cF_Y$-invariant $f$-exceptional prime divisors (cf. \cite[Corollary, p. 282]{MP13}). 
    Thus $(Y,\cF_Y,B_Y + E_1 ,\bM)$ is lc as $\bM$ descends to $Y$. By Proposition~\ref{prop:simpleMMP}, we may run a $(K_{\cF_Y} + B_Y + E_1 + \bM_{Y})$-MMP over $X$, which terminates at a minimal model $\phi: Y \bir X'$ over $X$. Let $\pi: X' \to X$ be the induced morphism, and define $\cF' = \pi^{-1}\cF$, $B' = \pi^{-1}_* B$, and $E' = \phi_*E_1$. Note that $\phi$ contracts exactly the prime divisors that contained in $\Supp E_2$. Moreover, we have $\Theta = \phi_* F$, we conclude that (1)–(3) hold for $\pi$ by Proposition~\ref{prop:simpleMMP}.

    If $(X,\cF,B,\bM)$ has an lc center $P$, we may assume that there exists a prime divisor $T'$ on $Y$, centered on $P$, such that $a(T',\cF,B,\bM)=-\epsilon(T')$. Then we have $\mu_{T'}E_1 = \epsilon(T') $ and $\mu_{T'}(E_1 - E_2) =-a(T',\cF,B,\bM) = \epsilon(T')$, hence $T'$ is not contained in $\Supp E_2$. It follows that $T' $ is not contracted by $\phi$, and let $T = \phi_* T'$ so (4) follows.
\end{proof}

We give another corollary of Proposition~\ref{prop:simpleMMP} to construct a crepant pullback of any rank 1 gfq on threefolds.

\begin{lemma}\label{lemma:crepantmodel}
Let $X$ be a normal projective threefold, and $(X,\cF,B,\bM)$ be a gfq of rank 1 on $X$. Then there exists a birational morphism $ p : X' \to X$ such that $\cF' = p^{-1}\cF$ has simple singularities, and we may write 
\[
K_{\cF'} + B' + E' + \bM_Y = p^*(K_\cF + B +\bM_X),
\]
where $B' = p^{-1}_*B$, $E'\geq 0$ is an $p$-exceptional $\R$-divisor. 
\end{lemma}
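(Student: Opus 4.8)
The plan is to reduce the statement to Proposition~\ref{prop:pltblowup}, which already produces a birational morphism $\pi\colon X'\to X$ such that $\cF'=\pi^{-1}\cF$ has simple singularities and such that
\[
K_{\cF'}+B'+E'+\bM_{X'}=\pi^*(K_\cF+B+\bM_X),
\]
where $B'=\pi^{-1}_*B$ and $E'$ is the sum of all $\pi$-exceptional prime divisors which are not $\cF'$-invariant. The only gap between that statement and the one we want is that in Proposition~\ref{prop:pltblowup} the divisor $E'$ is, a priori, only the reduced sum of the non-invariant exceptional divisors, whereas here we merely need \emph{some} effective $p$-exceptional $\R$-divisor $E'\ge 0$; and we additionally need to observe that $\bM_{X'}$ plays the role of $\bM_Y$ in the displayed formula, which is automatic once $\bM$ descends to $X'$ (which we may always arrange by passing to a higher model on which $\bM$ descends before running the argument). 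So essentially the whole content is already contained in Proposition~\ref{prop:pltblowup}.

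Concretely, I would argue as follows. First, replacing $X'$ in Proposition~\ref{prop:pltblowup} by a higher model if necessary, we may assume $\bM$ descends to $X'$, so that $\bM_{X'}$ is $\R$-Cartier and equals the trace appearing in the formula. Next, apply Proposition~\ref{prop:pltblowup}(1)--(2) to get $\pi\colon X'\to X$ with $\cF'$ of simple singularities and the crepant identity above with $E'=\sum_D\epsilon(D)D$ over $\pi$-exceptional $D$; this $E'$ is certainly effective, and $E'\ge 0$ is all we are claiming. Thus $p:=\pi$ works verbatim. One subtle point: Proposition~\ref{prop:pltblowup} is stated for \emph{lc} gfqs, whereas here $(X,\cF,B,\bM)$ is only assumed to be a gfq. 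If this causes a problem, the fix is to run the argument of the proof of Proposition~\ref{prop:pltblowup} directly: take $f\colon Y\to X$ with $\bM$ descending to $Y$ and $\cF_Y=f^{-1}\cF$ of simple singularities (Theorem~\ref{thm:simpleprop}), pass to a higher model so that $(Y,\cF_Y,B_Y+E_1)$ and $(Y,F)$ are lc, write $K_{\cF_Y}+B_Y+\bM_Y+E_1=f^*(K_\cF+B+\bM_X)+E_2$ with $E_2\ge 0$ exceptional, run the $(K_{\cF_Y}+B_Y+E_1+\bM_Y)$-MMP$/X$ (which converges by Proposition~\ref{prop:simpleMMP} once one checks the relevant triple is lc — and $(Y,\cF_Y,B_Y+E_1,\bM)$ is lc since $\bM$ descends to $Y$), and let $X'$ be the resulting relative minimal model with $p\colon X'\to X$; then $p$ contracts exactly $\Supp E_2$, $\cF'=p_*\cF_Y$ has simple singularities, and $E'=\phi_*E_1\ge 0$ gives the required crepant formula.

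The main obstacle, such as it is, is the lc-hypothesis bookkeeping just mentioned: Proposition~\ref{prop:simpleMMP} and Proposition~\ref{prop:pltblowup} are phrased for lc inputs, so one has to be a little careful that the auxiliary triple $(Y,\cF_Y,B_Y+E_1,\bM)$ fed into the MMP is genuinely lc — which it is, because $E_1$ is the reduced exceptional sum and we arranged $(Y,\cF_Y,B_Y+E_1)$ lc on a high enough model, with $\bM$ descending to $Y$ contributing nothing to discrepancies. Beyond this there is no real difficulty: the statement is a packaging of Proposition~\ref{prop:pltblowup} with the $\bM$-term written on the minimal model rather than on $Y$, and the termination is exactly the one already invoked there (each step is also a $K_{\cF_Y}$-MMP, so \cite[Theorem 7.1]{CS20} applies).
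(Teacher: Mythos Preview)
Your reduction to Proposition~\ref{prop:pltblowup} has a genuine gap: that proposition assumes $(X,\cF,B,\bM)$ is lc, and your proposed fix for the non-lc case does not work. You claim that after passing to a high enough model one can arrange $(Y,\cF_Y,B_Y+E_1)$ to be lc, but this is false in general. Since $B_Y=f^{-1}_*B$ is the strict transform, any component of $B$ with coefficient exceeding $\epsilon(D)$ persists in $B_Y$ with the same coefficient, and no further resolution can repair this. The citation to \cite[Corollary, p.~282]{MP13} used in the proof of Proposition~\ref{prop:pltblowup} presupposes the coefficients of the boundary are already at most $\epsilon$; it does not manufacture lc-ness from an arbitrary $B$. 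The statement of Lemma~\ref{lemma:crepantmodel} makes no lc assumption, and this matters: it is invoked in the proof of Theorem~\ref{MainThm:conethmgfq} for gfqs that are not assumed lc.

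The paper circumvents this by running a different MMP. Rather than the $(K_{\cF_Y}+B_Y+E_1+\bM_Y)$-MMP$/X$, it runs the $(K_{\cF_Y}+\bM_Y)$-MMP$/X$ with \emph{no boundary}. This is legitimate because $\cF_Y$ has simple singularities, hence $(Y,\cF_Y)$ is canonical, and since $\bM$ descends to $Y$ the gfq $(Y,\cF_Y,0,\bM)$ is canonical as well; Proposition~\ref{prop:simpleMMP} then applies. After terminating at $\phi\colon Y\dashrightarrow X'$, one writes $K_{\cF'}+\bM_{X'}\sim_{\R,X}F'-B'-E'$, observes this is nef$/X$ with pushforward $-B\le 0$, and uses the negativity lemma to conclude $F'=0$ (since $F'$, $B'$, $E'$ are pairwise without common components). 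This is the missing idea in your proposal: drop $B$ from the MMP entirely and recover the crepant formula afterwards via negativity.
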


\begin{proof}
    By Theorem~\ref{thm:simpleprop}, let $f: Y \to X$ be a birational morphism such that $\bM$ descends to $Y$ and $\cF_Y = f^{-1}\cF$ has simple singularities. Let $B_Y = f^{-1}_*B$, and we may write 
    \[
    K_{\cF_Y} + B_Y + E + \bM_Y = f^*(K_\cF + B+ \bM_X) + F,
    \]
    where $E,F \geq 0$ are both $f$-exceptional $\R$-divisors and has no common component. Since $\cF_Y$ has simple singularities, then by Theorem~\ref{thm:simpleprop}, $Y$ is $\Q$-factorial klt and $(Y,\cF_Y)$ has canonical singularities. Then $(Y,\cF_Y,0,\bM)$ is also canonical. 
    By Proposition~\ref{prop:simpleMMP}, we may run a $(K_{\cF_Y}+ \bM_Y)$-MMP over $X$ to obtain a minimal model $\phi: Y \bir X'$ over $X$. Let $p: X' \to X$ be the induced morphism, then $p$ is a required morphism. Indeed, let $\cF' = p^{-1}\cF$, $B' = p^{-1}_*B = \phi_* B_Y$, $E' = \phi_* E$, and $F' = \phi_* F$. Note that 
    \[
    K_{\cF'} + \bM_{X'} = p^*(K_\cF + B + \bM_X) + F' - B' -E' \sim_{\R,X} F' - B' -E' 
    \]
    is nef over $X$, and $F'$ is exceptional over $X$, $p_*(F'-B' - E') = -B - p_* E' \leq 0$. By the negativity lemma, $F' - B' -E' \leq 0$, then $F' = 0$ as $ F'$, $B'$ and $E'$ have no common components pairwise. By Proposition~\ref{prop:simpleMMP}, $\cF'$ has simple singularities.
\end{proof}

\begin{proof}[Proof of Theorem~{\upshape\ref{MainThm:conethmgfq}}]

    By Lemma~\ref{lemma:crepantmodel}, there exists a birational morphism $p:X'\to X$ such that we may write
    \[
    K_{\cF'} + B' + \bM_{X'} = p^*(K_\cF + B + \bM_X),
    \]
    where $\cF' = p^{-1}\cF$, $B'\geq 0$ is an $\R$-divisor, and $X'$ is $\Q$-factorial. By Lemma~\ref{lemma:ExtRayPullback}, for each $(K_\cF + B + \bM_X)$-negative extremal ray $R$ which is not contained in $\NE(X)_{\Nlc(X,\cF,B,\bM)}$, there exists a $(K_{\cF'}+B' + \bM_{X'})$-negative extremal ray $R'$ such that $p_*R' = R$ and $R'$ is not contained in $\NE(X')_{\Nlc(X',\cF',B',\bM)}$. Apply Theorem~\ref{theorem:conethmgfq,Q-fac} for $(X',\cF',B',\bM)$, $R'$ is generated by a rational curve $C'$, tangent to $\cF'$, with $0<-(K_{\cF'}+ B' + \bM_{X'} )\cdot C' \leq 2$. Let $C = h(C')$, then $C$ is a rational curve tangent to $\cF$, $R$ is generated by $C$, and 
    \[
    0 < -(K_\cF + B + \bM_X)\cdot C = -(K_{\cF'} + B' + \bM_{X'})\cdot C' \leq 2. \qedhere
    \]
\end{proof}

We also have the divisorial subadjunction for foliated threefolds of rank 1 without $\Q$-factoriality.

\begin{corollary}
    Let $X$ be a normal projective threefold and let $(X, \cF, B ,\bM)$ be a gfq of rank $1$ on $X$. Let $S$ be a prime divisor on $X$ and let $\nu: S^\nu \to S$ be the normalization of $S$. Assume that $S$ is not contained in $\Supp B$. Then there exists a restricted foliation $\cF_S$ with $\rank \cF_S = 1-\epsilon(S)$ and an $\R$-divisor $B_S\geq 0$ such that 
    \[
    (K_\cF + B + \epsilon(S)S + \bM_X)|_{S^\nu} = K_{\cF_S} + B_S + \bM_{S^\nu}^S,
    \]
    where $\bM^S = \bM|_{S^\nu}$ is the restricted b-divisor.
\end{corollary}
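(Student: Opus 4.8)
The plan is to reduce this non-$\Q$-factorial statement to the $\Q$-factorial divisorial subadjunction of Proposition~\ref{prop:subadj,primediv} by passing to the crepant model furnished by Lemma~\ref{lemma:crepantmodel}. First I would apply Lemma~\ref{lemma:crepantmodel} to obtain a birational morphism $p : X' \to X$ such that $\cF' = p^{-1}\cF$ has simple singularities and
\[
K_{\cF'} + B' + E' + \bM_{X'} = p^*(K_\cF + B + \bM_X),
\]
where $B' = p^{-1}_*B \geq 0$ and $E' \geq 0$ is $p$-exceptional. By Theorem~\ref{thm:simpleprop}, $X'$ is $\Q$-factorial klt. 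Let $S'$ be the strict transform of $S$ on $X'$. Since $S'$ is not $p$-exceptional and $S \not\subset \Supp B$, we get $S' \not\subset \Supp(B'+E')$, and since $p$ is an isomorphism at the generic point of $S$, we have $\epsilon(S') = \epsilon(S)$.

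Next I would verify that $(X',\cF',B'+E',\bM)$ meets the hypotheses of Proposition~\ref{prop:subadj,primediv}: it is a gfq of rank $1$ because $K_{\cF'} + (B'+E') + \bM_{X'} = p^*(K_\cF + B + \bM_X)$ is $\R$-Cartier, and moreover $K_{\cF'} + B' + E' = p^*(K_\cF + B + \bM_X) - \bM_{X'}$ and $\epsilon(S')S'$ are $\R$-Cartier since $\bM_{X'}$ is $\R$-Cartier and $X'$ is $\Q$-factorial. Applying Proposition~\ref{prop:subadj,primediv} to this gfq along $S'$ then produces a restricted foliation $\cF_{S'}$ on $(S')^\nu$ with $\rank \cF_{S'} = 1 - \epsilon(S')$, an $\R$-divisor $B_{S'} \geq 0$ on $(S')^\nu$, and an identity
\[
(K_{\cF'} + B' + E' + \epsilon(S')S' + \bM_{X'})\big|_{(S')^\nu} = K_{\cF_{S'}} + B_{S'} + \bM^{S'}_{(S')^\nu},
\]
where $\bM^{S'} = \bM|_{(S')^\nu}$ is the restricted b-divisor.

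Finally, let $\pi : (S')^\nu \to S^\nu$ be the induced birational morphism. I would set $\cF_S := \pi_*\cF_{S'}$, so that $\rank \cF_S = 1 - \epsilon(S') = 1 - \epsilon(S)$, and $B_S := \pi_*B_{S'} \geq 0$, and then push the displayed identity forward by $\pi_*$. The three points to check are: $\pi_*K_{\cF_{S'}} = K_{\cF_S}$, which holds because $K_{\cF_{S'}}$ and $\pi^*K_{\cF_S}$ agree over the locus where $\pi$ is an isomorphism (whose complement in $S^\nu$ is a finite set of points) and a Weil divisor on the normal surface $S^\nu$ is the closure of such a restriction; $\pi_*\bM^{S'}_{(S')^\nu} = \bM^S_{S^\nu}$, which follows by computing both restricted b-divisors on a common resolution $Y$ dominating $X'$, since then the map $S_Y \to S^\nu$ factors through $S_Y \to (S')^\nu \xrightarrow{\pi} S^\nu$; and $\pi_*\big((p^*(K_\cF+B+\bM_X))|_{(S')^\nu}\big) = (K_\cF+B+\bM_X)|_{S^\nu}$, by functoriality of pullback and the projection formula $\pi_*\pi^* = \mathrm{id}$. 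Combining these, and interpreting $\epsilon(S)S|_{S^\nu}$ as $\pi_*\big(\epsilon(S')S'|_{(S')^\nu}\big)$ — the natural way to make sense of restricting the possibly non-$\Q$-Cartier divisor $\epsilon(S)S$ to $S^\nu$, which reduces to the usual meaning when $X$ is already $\Q$-factorial — yields the desired formula.

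The main obstacle is the bookkeeping of the last step: one must confirm that the restricted foliation and the restricted b-divisor behave functorially under the birational contraction $\pi$, so that $\pi_*$ of the $X'$-adjunction is genuinely the $X$-adjunction, and that the interpretation of $(K_\cF + B + \epsilon(S)S + \bM_X)|_{S^\nu}$ via the crepant model does not depend on the chosen model. The latter is not needed for the bare existence statement, but follows once one knows that $\cF_{S'}$ and $B_{S'}$ in Proposition~\ref{prop:subadj,primediv} are independent of the auxiliary resolution, which is part of \cite[Proposition-Definition 3.7]{CS24}. Everything else in the argument is a routine descent.
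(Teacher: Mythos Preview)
Your proposal is correct and follows essentially the same route as the paper: pass to a $\Q$-factorial crepant model via Lemma~\ref{lemma:crepantmodel}, apply Proposition~\ref{prop:subadj,primediv} on that model, and push forward along the induced map $(S')^\nu \to S^\nu$. The only cosmetic difference is that the paper applies Lemma~\ref{lemma:crepantmodel} to $(X,\cF,B+\epsilon(S)S,\bM)$ so that the $\epsilon(S)S$ term is already absorbed into the boundary upstairs, whereas you keep it separate and handle its restriction explicitly; both versions yield the same pushforward computation, and your verification that $\pi_*\bM^{S'}_{(S')^\nu}=\bM^S_{S^\nu}$ is exactly the ``It is clear that $\bM^T_{S^\nu}=\bM^S_{S^\nu}$'' step in the paper.
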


\begin{proof}
    By Lemma~\ref{lemma:crepantmodel}, there exists a birational model $p:X' \to X$ such that 
    \[
    K_{\cF'} + B' + \epsilon(T)T + \bM_{X'} = p^*(K_\cF + B +\epsilon(S)S + \bM_X),
    \]
    where $B'\geq 0$ and $T = p^{-1}_* S$ is not contained in $\Supp B'$. Moreover, $\cF' = p^{-1}\cF$ has simple singularities, and $X'$ is $\Q$-factorial. By Proposition~\ref{prop:subadj,primediv}, there exists a restricted foliation $\cF_T$ of rank $1-\epsilon(S)$ on $T^\nu$ and an $\R$-divisor $B_T \geq 0$ on $T^\nu$ such that
    \[
    \left(K_{\cF'} + B' + \epsilon(T)T + \bM_{X'}\right)|_{T^\nu} = K_{\cF_{T}} + B_{T} + \bM^{T}_{T_\nu},
    \]
    where $\bM^T = \bM|_{T^\nu}$ is the restricted b-divisor. Let $q:T^\nu \to S^\nu $ be the induced morphism. It is clear that $\bM^T_{S^\nu} = \bM^S_{S^\nu}$, where $\bM^S = \bM|_{S^\nu}$. Let $\cF_S = q_*\cF_T$ and $B_S =q_* B_T $, then $B_S \geq 0$ and we may conclude.
\end{proof}

The following proof of Theorem~\ref{MainThm:FoliatedMMP} is a direct corollary of \cite[Theorem 8.10]{CS20}
, similarly to the proof of Proposition~\ref{prop:simpleMMP}. 

\begin{proof}[Proof of Theorem~{\upshape\ref{MainThm:FoliatedMMP}}]
    If $K_\cF + B + \bM_X$ is pseudo-effective, we may assume that $K_\cF + B +\bM_X$ is not nef. Then let $R$ be a $(K_\cF + B + \bM_X)$-negative extremal ray. By Theorem~\ref{theorem:conethmgfq,Q-fac}, $R$ is spanned by an $\cF$-invariant rational curve $C$. Similarly to the proof of Proposition~\ref{prop:simpleMMP}, $K_\cF \cdot C<0$, and $R$ is also an $K_\cF$-negative extremal ray. We only need to show that the contraction or the flip associated to $R$ exists, which has been established in \cite[Theorem 8.10]{CS20}.

    If $K_\cF + B + \bM_X$ is not pseudo-effective, then $K_\cF$ is not pseudo-effective. By \cite[Theorem 1.1]{CP19} or \cite[Lemma 2.21]{CS21}, $\cF$ is algebraically integrable. Then we may conclude by \cite[Theorem A.6]{liu_minimal_2024}.
\end{proof}

\section{Base-point-free theorem}

In this section, we prove Theorem~\ref{MainThm:BPF}, a version of the base-point-free theorem for foliated threefolds of rank one. In fact, using the similar idea of \cite[Theorem 1.1]{CS25bpf} and the generalized foliated structure, we prove the following version of the base-point-free theorem for generalized foliated quadruples:

\begin{theorem}\label{Thm:BPFgfq}
    Let $X$ be a normal projective threefold, and let $(X,\cF ,B,\bM)$ be an lc gfq of rank $1$ on $X$. Assume that $X$ is $\Q$-factorial klt. Let $A$ be an ample $\R$-divisor on $X$. Suppose that $H = K_\cF + B + \bM_X + A$ is nef. Then $H$ is semi-ample.
\end{theorem}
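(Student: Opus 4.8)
The plan is to combine the cone theorem (Theorem~\ref{theorem:conethmgfq,Q-fac} and Corollary~\ref{coro: afterconethm,extray}), the foliated bend and break (Theorem~\ref{thm:B&B}), and Lemma~\ref{lemma:BPFsemiamp}. First dispose of the ``positive'' cases: if $H$ is ample we are done; if $K_\cF+B+\bM_X$ is nef then $H=(K_\cF+B+\bM_X)+A$ is nef plus ample, hence ample; and setting $\lambda:=\inf\{\,t>0\mid K_\cF+B+\bM_X+tA\text{ is nef}\,\}\in(0,1]$, if $\lambda<1$ then $H=(K_\cF+B+\bM_X+\lambda A)+(1-\lambda)A$ is again ample. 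So we may assume $K_\cF+B+\bM_X$ is not nef and $\lambda=1$. By Corollary~\ref{coro: afterconethm,extray} there are finitely many $(K_\cF+B+\bM_X)$-negative extremal rays $R_1,\dots,R_k$, each spanned by an $\cF$-invariant rational curve $C_i$ with $0<-(K_\cF+B+\bM_X)\cdot C_i\le 2$ and $H\cdot C_i=0$; since $(X,\cF,B,\bM)$ is lc (so $Z_{-\infty}=0$ in Theorem~\ref{theorem:conethmgfq,Q-fac}) and $H-(K_\cF+B+\bM_X)=A$ is ample, Kleiman's criterion shows that $H^{\perp}\cap\NE(X)$ is precisely the face $\sum_i R_i$, and that $k\ge 1$ because $H$ is not ample.

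Next, split according to whether $H$ is big. Suppose $H$ is not big; since $H$ is nef we have $H^3=0$, and, exactly as in Step~1 of the proof of Theorem~\ref{theorem:conethmgfq,Q-fac}, we may apply Theorem~\ref{thm:B&B} with $D_1=H$ (choosing $\nu\in\{0,1,2\}$ with $H^\nu\cdot A^{3-\nu}>H^{\nu+1}\cdot A^{2-\nu}=0$, so that $-(K_\cF+B)\cdot D_2\cdot D_3>0$ holds because $\bM_X$ is pseudo-effective). This produces, through a general point of $X$, a rational curve $\xi$ tangent to $\cF$ with $H\cdot\xi=0$; as $\rank\cF=1$ such a curve is $\cF$-invariant, and since these curves sweep out $X$ the foliation $\cF$ is algebraically integrable. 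Lemma~\ref{lemma:BPFsemiamp}, applied with this covering curve $\xi$, then gives that $H$ is semi-ample.

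The remaining case, $H$ big (and nef), is the main obstacle, and I expect it to require running the minimal model program. Since $H$ is big, no $H$-trivial $(K_\cF+B+\bM_X)$-negative extremal ray can be of fibre type --- a fibre-type contraction trivial on such a ray would realise $H$ as a pullback from a variety of dimension $<3$, forcing $\kappa(H)<3$ --- so the plan is to run a $(K_\cF+B+\bM_X)$-MMP contracting or flipping only such rays. This is simultaneously a $K_\cF$-MMP, so the required contractions and flips exist by \cite{CS20} and the process terminates by \cite[Theorem~7.1]{CS20}; an $H$-trivial flip preserves nefness, so the transform $H'$ of $H$ remains nef and big throughout, and on the end model there is no $H'$-trivial $(K_{\cF'}+B'+\bM_{X'})$-negative extremal ray. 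The delicate point --- which I regard as the technical heart of the proof --- is that the transform of $A$ need not stay ample along this MMP (it degenerates to a big and nef divisor under divisorial contractions, and need not even remain nef after a flip). One is therefore led to prove the statement with the hypothesis weakened to ``$H-(K_\cF+B+\bM_X)$ is big and nef'', controlling this weaker difference by Kodaira's lemma and absorbing its effective part into $\bM$ as a descended b-divisor (which alters neither log canonicity nor b-semi-ampleness), so that on the end model $H'$ becomes ample by Kleiman's criterion and hence $H$ is semi-ample. Finally, Theorem~\ref{MainThm:BPF} follows from this by writing $D-(K_\cF+B)=A_1+A_2$ with $A_1,A_2$ ample $\Q$-divisors and taking $\bM=\overline{A_1}$, $A=A_2$.
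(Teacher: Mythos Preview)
Your treatment of the non-big case matches the paper's: bend and break produces an $\cF$-invariant curve $\xi$ through a general point with $H\cdot\xi=0$, $\cF$ is algebraically integrable, and Lemma~\ref{lemma:BPFsemiamp} finishes. The reduction to $\lambda=1$ and the invocation of Corollary~\ref{coro: afterconethm,extray} are also the same.

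The gap is in the big case. Your proposed fix --- weaken the hypothesis to ``$A$ big and nef'' and absorb the effective part of a Kodaira decomposition $A\sim_\Q A''+E$ into $\bM$ --- does not work as stated: $\overline{E}$ is in general neither b-nef nor b-semi-ample, so $\bM+\overline{E}$ is not the moduli part of a gfq, and absorbing $E$ into $B$ instead may destroy log canonicity (Bertini fails for foliations, as the paper itself emphasises). Moreover, after a flip the transform of $A$ is not even nef (it is anti-ample over the flip base), so weakening to ``big and nef'' does not cover that case either. Your claim that on the end model $H'$ is ample by Kleiman also needs the strict positivity of the transform of $A$, which you have not secured.

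The paper handles the two kinds of contractions differently, and both tricks are worth knowing. For a divisorial contraction $\phi:X\to Y$ of some $R_i$, the key is to absorb \emph{on $X$, before contracting}: choose a small ample $\Q$-divisor $D$ on $Y$ with $A-\phi^*D$ still ample on $X$, pick $\Theta\ge 0$ on $X$ with $\Theta\sim_\Q A-\phi^*D$, and set $\bM^Y:=\bM+\overline{\Theta}$. Since $\Theta$ is $\Q$-linearly equivalent to an ample divisor on $X$, $\overline{\Theta}$ is b-semi-ample; since it descends to $X$, log canonicity is unchanged. Then $(Y,\cF_Y,B_Y,\bM^Y)$ is lc with b-semi-ample moduli part, $D$ is genuinely ample on $Y$, and $H_Y=K_{\cF_Y}+B_Y+\bM^Y_Y+D$; one inducts on the Picard number. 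For flips the paper does \emph{not} run them one by one. Instead, after reducing (via \cite[Corollary~8.5]{CS20} and \cite[Lemma~3.2]{CS25bpf}) to the situation where every $Z_i=\loc R_i$ is an irreducible curve and the $Z_i$ are pairwise disjoint, it glues the individual flipping contractions $f_i:X\to X_i$ along their common open set to a single proper morphism $f:X\to Y$; then $H$ descends to a $\Q$-Cartier $H_Y$ with $H=f^*H_Y$, and one shows $H_Y$ is ample by contradiction (any $H_Y$-trivial extremal ray on $Y$ would lift to a new $H$-trivial extremal ray on $X$ distinct from all the $R_i$). This sidesteps entirely the problem of tracking $A$ through flips.
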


\begin{proof}
    If $H$ is not big, then $K_\cF$ is not pseudo-effective. By \cite[Theorem 1.1]{CP19} or \cite[Lemma 2.21]{CS21}, $\cF$ is algebraically integrable. 
    By \cite[Theorem A.8]{liu_minimal_2024}, $H$ is semi-ample.

    From now we assume that $H$ is big. If $K_\cF + B + \bM_X + \alpha A$ is nef for some $\alpha \in (0,1)$, then $H = (K_\cF+ B+ \bM_X +\alpha A) + (1-\alpha) A$ ample and we may conclude. Thus we may assume that
    \[
    \lambda = \inf \{\, t > 0 \mid K_\cF + B + \bM_X + tA \text{ is nef}\, \} = 1.
    \]
    By Corollary~\ref{coro: afterconethm,extray}, there exist only finitely many $(K_\cF + B + \bM_X)$-negative extremal rays $R_1, \dots, R_k$ such that $H$ vanishes along $R_i$ for each $1 \leq i\leq k$. By Theorem~\ref{theorem:conethmgfq,Q-fac}, $R_i$ is generated by an $\cF$-invariant curve $C_i$. Since $H$ is big, $\loc R_i\neq X$ for each $1 \leq i \leq k$. By Corollary~\ref{coro:singinvcurve}, $C_i$ is not contained in $\Sing^+\cF$. By the proof of Lemma~\ref{lemma:step1}, for each $i$ we have $K_\cF \cdot C_i <0$.

    By \cite[Corollary 8.5]{CS20}
    , we may assume that $\cF$ is canonical along $C_i$ for each $i$. If $\loc R_i$ has dimension $2$ for some $i$, then by \cite[Theorem 6.2]{CS20}
    , the divisorial contraction $\phi: X\to Y$ associated to $R_i$ of $X$ exists. Take a sufficiently small ample $\R$-divisor $D$ on $Y$ such that $A - \phi^* D$ is ample. Let $\Theta\geq 0$ be a $\R$-divisor such that $A - \phi^* D \sim_\R \Theta$. Since $(X,\cF,B,\bM)$ is lc, then so is $(X,\cF,B,\bM + \overline{\Theta + \phi^*D})$. Note that as b-divisors, $\overline{\phi^*D} = \overline{D}$, and since $H$ is numerically trivial over $Y$, $K_\cF + B + \bM_X + \Theta + \phi^*D$ is also numerically trivial over $Y$. Then we have
    \[
    K_\cF + B + \bM_X + \Theta + \phi^*D = \phi^*(K_{\cF_Y} + B_Y +\bM_Y +  \Theta_Y + D),
    \]
    where $\cF_Y = \phi_*\cF$, $B_Y = \phi_* B$ and $\Theta_Y = \phi_*\Theta$. In particular, $(Y,\cF_Y,B_Y,\bM + \overline{\Theta} + \overline{D})$ is lc. Let $\bM^Y = \bM + \overline{\Theta}$, then $\bM^Y$ is b-nef and $(Y,\cF_Y,B_Y,\bM^Y)$ is lc. Then we may replace $(X,\cF,B,\bM),A$ with $(Y,\cF_Y,B_Y,\bM^Y),D$ and apply induction on the Picard number of $X$.

    Now we assume that when $Z_i := \loc R_i$ has dimension $1$ for every $1 \leq i \leq k$. We claim that for any surface  $T \subset X$, $H|_T$ is big. Suppose not for some surface $T \subset X$. If $T$ is not $\cF$-invariant, take $0< \epsilon \ll 1 $ such that $H - \epsilon A $ is big. Let $\Gamma \sim_\R H-\epsilon A$ be an effective $\R$-divisor. Since $H|_T$ is not big, $\Gamma|_T$ is not pseudo-effective. In particular, $T \subset \Supp \Gamma$ and there exists a general curve $C$ on $T$ which is not $\cF$-invariant such that $\Gamma \cdot C <0$. We may take $C$ sufficiently general such that $C$ is not contained in any components of $\Supp \Gamma$ other than $T$. By Lemma~\ref{lemma:noninv}, $(K_\cF + B +\bM_X)\cdot C \geq 0$. However,
    \[
    (K_\cF + B + \bM_X)\cdot C = (H - A)\cdot C = (\Gamma - (1-\epsilon)A)\cdot C <0,
    \]
    which leads to a contradiction. Hence $T$ is $\cF$-invariant. Let $S \to T$ be the normalization map. By Proposition~\ref{prop:subadj,primediv}, there exists $\Delta_S \geq 0$ on $S$ such that 
    \[
    H|_S = (K_\cF + B + \bM_X + A)|_S = K_{\cF_S} + \Delta_S + \bM^S_S,
    \]
    where $\cF_S$ is the restricted foliation. Since $H|_S$ is not big, by Theorem~\ref{thm:B&B} there exists a general curve $\xi_S$ on $S$ such that $H|_S \cdot \xi_S = 0$. Let $\xi$ be the image of $\xi_S$ on $X$, then $H\cdot \xi = 0$. Since $\xi$ is general on $T$ and $H$ is big, $T \cdot \xi<0$. In particular, there exists $1\leq j \leq k$ such that $T \cdot R_j <0$. In particular, we have $\loc R_j \subset T$. Since $\loc R_j$ is of dimension $1$, $[\xi]$ is not contained in $ R_j$. By \cite[Lemma 3.2]{CS25bpf}, 
    through any general point $x$ of $X$ there exists a curve $\xi_x$ such that $H \cdot \xi_x = 0$, which contradicts to the assmuption that $H$ is big, and the claim follows.

    By \cite[Lemma 3.2]{CS25bpf}, we may assume that $Z_i, Z_j$ are disjoint for each $i \neq j$. Let $\Sigma = \bigcup_{i=1}^k Z_i$. By \cite[Lemma 5.4]{CS20} and \cite[Proposition 2.29]{CS20}, the normal bundle of $\Sigma$ is anti-ample. Then by \cite[Theorem 6.2]{Art70}, we may construct a morphism $f: X \to Z$ of algebraic spaces which contracts each connected component of $Z_i$ to a point. Since $R_i$ is $H$-trivial for each $1 \leq i \leq k$, by the construction of flips in \cite[Theorem 8.8]{CS20}, there exists an $\R$-Cartier divisor $H_Z$ on $Z$ such that $H = f^* H_Z$. In particular, $H_Z$ is nef on $Z$. 
    
    If $H_Z$ is ample on $Z$, then $Z$ is a projective variety and $H$ is semi-ample, hence we may conclude. Then we may assume that $H_Z$ is nef but not ample. Note that $H_Z|_{T'}$ is big for any surface $T'$ on $Z$. Therefore by the Nakai-Moishezon criterion, there exists an extremal ray $R_{0,Z}$ of $\NE(Z)$ such that $H_Z$ vanishes along $R_{0,Z}$. Then by Lemma~\ref{lemma:ExtRayPullback}, there exists an $H$-trivial extremal ray $R_0$ on $X$ such that the $\iota(R_0) = R_{0,Z} $ where $\iota: \NE(X) \to \NE(Z)$. Since $\iota(R_i) = 0$ for every $1 \leq i \leq k$, $R_0$ is not any of $R_i$, which contradicts to the choice of those $R_i$.
\end{proof}

\begin{proof}[Proof of Theorem~{\upshape\ref{MainThm:BPF}}]
    It follows from Theorem~\ref{Thm:BPFgfq} for $\bM = \overline{0}$.
\end{proof}

\section{ACC for log canonical thresholds}\label{section:ACCforLCT}

\subsection{Precise Adjunction Formula} We give a precise subadjunction formula to an invariant surface for generalized foliated quadruples of rank 1 on threefolds, provided that the foliation has simple singularities.

\begin{proposition}[cf. {\cite[Theorem 3.3]{LMX24}}]\label{prop:Pre.AdjF.Simple}
    Let $X$ be a normal threefold and $(X,\cF,B,\bM) $ be a gfq of rank $1$ on $X$ such that $\cF$ has simple singularities. Let $T,B_1,\dots,B_m$ be distinct prime divisors on $X$, and let $\bM^1 ,\dots,\bM^n$ be b-nef b-Cartier b-divisors on $X$. Suppose that $\epsilon (T) = 0$ and 
    \[
    B = \sum_{i=1}^m b_iB_i, \quad \bM = \sum_{j=1}^n r_j\bM^j,
    \]
    where $b_1,\dots,b_m,r_1,\dots r_n$ are nonnegative real numbers. Let $S\to T$ be the normalization of $T$ and let $\bM^{j,S} = \bM^j |_{S}$ be the restricted b-divisors. Then there exist a foliation $\cF_S$ of rank $1$ on $S$, prime divisors $C_1,\dots, C_q$ on $S$, and nonnegative integers $\{w_{i,j}\}_{1\leq i \leq q,0\leq j\leq m}$ and $\{v_{i,j}\}_{1\leq i \leq q, 1\leq j \leq n}$ such that for any real numbers $b_1',\dots, b_m'$, $r_1',\dots r_n'$, we have the following holds: Let 
    \[
    B' = \sum_{i=1}^m b_i'B_i, \quad \bM' = \sum_{j=1}^n r_j'\bM^j,
    \]
    we may write 
    \[
    (K_{\cF} + B' + \bM'_X)|_{S} = K_{\cF_S} + B'_S + \bM'^S_{S},
    \]
    where 
    \[
    B'_S = \sum_{i=1}^q \frac{w_{i,0} + \sum_{j=1}^m w_{i,j}b_j' + \sum_{j=1}^n v_{i,j}r_j'}{2} C_i, \quad \bM'^S = \sum_{j=1}^n r_j' \bM^{j,S}.
    \]
    In particular, if there exists an $I\subset [0,+\infty)$ such that $1, b_1',\dots, b_m'$, $r_1',\dots r_n' \in I$, then the coefficients of $B_S$ are contained in $\frac{1}{2}I_+$. 
\end{proposition}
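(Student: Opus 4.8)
The plan is to exploit the \emph{linearity} of foliated divisorial adjunction in the coefficients: I would split $B'_S$ into a sum of three divisors none of which depends on $b_i',r_j'$, and then check that each of those three divisors has coefficients in $\tfrac12\Z_{\geq 0}$.

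\textbf{Step 1 (linear decomposition).} Since $\cF$ has simple singularities, $X$ is $\Q$-factorial by Theorem~\ref{thm:simpleprop}, so every Weil divisor on $X$ is $\Q$-Cartier; in particular $K_\cF+B'+\bM'_X$ is $\R$-Cartier for every choice of $b_i',r_j'$ and restricts to $S$. The restricted foliation $\cF_S$ of Proposition~\ref{prop:subadj,primediv} depends only on $(X,\cF,T)$, the restricted b-divisors $\bM^{j,S}=\bM^j|_S$ depend only on $\bM^j$, and restriction of b-divisors is additive, so the $\bM$-contribution to the adjunction formula for $\bM'$ is $\sum_j r_j'\bM^{j,S}_S$. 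Applying Proposition~\ref{prop:subadj,primediv} to $(X,\cF,B',\bM')$ for nonnegative $b_i',r_j'$ (allowed: $\epsilon(T)=0$, $T\notin\Supp B'$, and $X$ is $\Q$-factorial) and to $(X,\cF,0,\overline 0)$, then cancelling the common summands $K_{\cF_S}$ and $\sum_j r_j'\bM^{j,S}_S$, we get
\[
B'_S=\Theta_0+\sum_{i=1}^m b_i'\,(B_i|_S)+\sum_{j=1}^n r_j'\,N^j,\qquad
\Theta_0:=K_\cF|_S-K_{\cF_S},\quad N^j:=\bM^j_X|_S-\bM^{j,S}_S;
\]
as both sides are affine-linear in $(b_i',r_j')$, this identity then holds for all real coefficients. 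Moreover $\Theta_0\geq 0$ (Proposition~\ref{prop:subadj,primediv} with $B=0$, $\bM=\overline 0$), $N^j\geq 0$ (the inequality $\bM^j_X|_S\geq\bM^{j,S}_S$ from the proof of Proposition~\ref{prop:subadj,primediv}, applied to $\bM^j$), and $B_i|_S\geq 0$ ($B_i$ is an effective prime divisor $\neq T$); and $\Theta_0$, the $B_i|_S$, the $N^j$ are all fixed divisors.

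\textbf{Step 2 (reduction to half-integrality).} Let $C_1,\dots,C_q$ enumerate the prime divisors of $S$ occurring in $\Theta_0$, in some $B_i|_S$, or in some $N^j$. It then suffices to show that $\mu_{C_\ell}\Theta_0,\ \mu_{C_\ell}(B_i|_S),\ \mu_{C_\ell}N^j\in\tfrac12\Z_{\geq 0}$ for every $\ell$; one sets $w_{\ell,0}:=2\mu_{C_\ell}\Theta_0$, $w_{\ell,i}:=2\mu_{C_\ell}(B_i|_S)$, $v_{\ell,j}:=2\mu_{C_\ell}N^j$, which are nonnegative integers, and $\cF_S$ is already in hand. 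The final assertion follows at once: if $1,b_i',r_j'\in I$ then $w_{\ell,0}+\sum_i w_{\ell,i}b_i'+\sum_j v_{\ell,j}r_j'$ is a finite sum of elements of $I$ ($w_{\ell,0}$ copies of $1$, $w_{\ell,i}$ copies of $b_i'$, $v_{\ell,j}$ copies of $r_j'$), hence lies in $I_+$, so every coefficient of $B'_S$ lies in $\tfrac12 I_+$.

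\textbf{Step 3 (local half-integrality; the main obstacle).} Finally one fixes $C=C_\ell$ and argues locally at its generic point $\eta$, a codimension-two point of $X$ on the $\cF$-invariant divisor $T$. Half-integrality of $\mu_C\Theta_0$ is the foliated counterpart of the classical different and follows from the divisorial adjunction of \cite{CS24} under the simple-singularities hypothesis (cf.\ \cite[Proposition-Definition 3.7, 3.12]{CS24} and \cite[Theorem 3.3]{LMX24}). For $\mu_C(B_i|_S)$ and $\mu_C N^j$ it is enough to know that every Weil divisor on $X$ has Cartier index dividing $2$ at $\eta$: granting this, one fixes a high enough log resolution $f\colon Y\to X$ on which all the $\bM^j$ descend (so each $\bM^j_Y$ is Cartier) and on which $T$, the total transforms of the $B_i$, and $\Exc(f)$ are simple normal crossings; then $f^*B_i$ has coefficients in $\tfrac12\Z_{\geq 0}$ and the effective exceptional defect $f^*\bM^j_X-\bM^j_Y$ has coefficients in $\tfrac12\Z_{\geq 0}$ (a Weil divisor of index dividing $2$ minus a Cartier one), and restricting to the strict transform of $T$ and pushing down to $S$ preserves this, giving $\mu_C(B_i|_S),\ \mu_C N^j\in\tfrac12\Z_{\geq 0}$. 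The index bound at $\eta$ is exactly where the classification of simple singularities enters: if $X$ is smooth at $\eta$ the index is $1$; if $\cF$ is terminal at $\eta$, the quasi-\'etale submersion presentation of $\cF$ near $\eta$ together with the hypothesis that no $\cF$-invariant component of $\Sing X$ passes through $\eta$ forces the transverse singularity of $X$ along $C$ to be at worst a Gorenstein ($cA$-type) point, hence index $1$; and if $(X,\cF)$ is formally the model of Example~\ref{ex:simple} at $\eta$, then $X$ has a transverse $A_1$-singularity along $C$, of index exactly $2$. Making this precise --- ruling out, along an invariant codimension-two locus, any denominator larger than $2$ --- is the technical heart of the argument; I expect to settle it by passing to the index-one cover of $K_\cF$ via Lemma~\ref{lemma:GFQfinite}, which near $\eta$ has degree at most $2$ and over which $\cF$ becomes terminal with $K_\cF$ Cartier, carrying out the integral computation upstairs, and descending.
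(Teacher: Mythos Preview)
Your overall architecture matches the paper's: decompose $B'_S$ linearly into the foliated different $\Theta_0$, the $B_i|_S$ contributions, and the $\bM^j$-defect $N^j$; reduce to half-integrality at each prime divisor $C$ on $S$; and settle this via a Cartier-index-$\leq 2$ bound at the codimension-$2$ point $\eta$ together with a log resolution on which the $\bM^j$ descend. The paper compresses your Steps~1--2 by invoking \cite[Theorem~3.3]{LMX24} for the entire $K_\cF+B$ contribution and then treats only the $\bM^j$ piece; for the index bound it simply cites \cite[Claim~3.4]{LMX24}, whereas you try to argue it by cases. Your log-resolution computation for $N^j$ is exactly the paper's.

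The one genuine gap is your terminal-case justification in Step~3. The implication ``Gorenstein ($cA$-type), hence index~$1$'' is false: the transverse $A_{n-1}$ surface singularity $\{xy=z^n\}$ is a $cA$-type hypersurface, hence Gorenstein, but its local class group is $\Z/n\Z$, so Weil divisors can have Cartier index $n$. What actually holds under simple singularities is stronger: if $\cF$ is terminal at $\eta$ and $X$ were singular there, write locally $X=V/G$ with $V$ smooth, $G$ cyclic acting freely in codimension one, and $\cF_V=\langle\partial\rangle$ with $\partial$ nowhere-vanishing and $G$-semi-invariant of character $\chi$. If $\chi=1$ then at every fixed point $\partial$ lies in the $G$-fixed part of the tangent space, so the fixed curve is a leaf and hence $\cF$-invariant; if $\chi\neq 1$ then every section $f\partial$ of $\cF$ has $f$ in a nontrivial eigenspace, hence $f$ vanishes along the fixed curve, and again the image curve is $\cF$-invariant. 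Either way the singular curve through $\eta$ is $\cF$-invariant, contradicting condition~(1). So in fact $X$ is \emph{smooth} at $\eta$ in the terminal case, which gives index $1$ for the trivial reason. This also repairs your backup plan: once the only non-smooth possibility at $\eta$ is the $\Z/2\Z$-model of Example~\ref{ex:simple}, the index-one cover of $K_\cF$ really does have degree $\leq 2$ and is smooth, so your descent works---but without the smoothness in the terminal case, the degree-$\leq 2$ claim is unjustified.
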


\begin{proof}
    Since $\cF$ has simple singularities, $X$ is $\Q$-factorial. For any prime divisor $D$ on $S$, let $\eta_D$ be the generic point of the image of $D$ in $X$.  By \cite[Theorem 3.3]{LMX24}, we only need to show that for any prime divisor $D$ on $S$, we have non-negative integers $v_1,\dots v_n$ such that near $\eta_D$ we have
    \[
    \left(\sum_{j=1}^n r_j'\bM^j_{X} \right) \Bigg|_{S} =  \frac{\sum_{j=1}^n v_jr_j'}{2}  D + \sum_{j=1}^n r_j' \bM^{j,S}_{S}.
    \]
    Here we point out that the proof of \cite[Theorem 3.3]{LMX24} does not require the lc assumption. By \cite[Claim 3.4]{LMX24}, for any prime divisor $L$ on $X$, $2L$ is Cartier near the generic point of the image of $D$ in $X$. Let $f:Y \to X$ be a log resolution of $(X,T)$ such that $\bM^j$ descends to $Y$ for each $1\leq j\leq n$. 
    Let $S' = f^{-1}_*T$, let $g: S' \to S$ be the induced morphism, and let $D' = g^{-1}_*D$.
    By the negativity lemma, we may write 
    $$f^*(2\bM^{j}_X) = 2\bM^{j}_Y + F_j, $$
    where $F_j\geq 0$. Then $F_j$ is Cartier near the generic point of the image of $D'$ on $Y$, since $f^*(2\bM^j_X)$ is Cartier near the generic point of the image of $D'$ on $Y$ and $\bM^j_Y$ is Cartier on $Y$. In particular, $v_j = \mu_{D'}(F_j|_{S'}) $ is a non-negative integer. Then we have
    \[
    (\bM^{j}_X)|_{S} =  \frac{v_j}{2}D + \bM^{j,S}_{S}
    \]
    near $\eta_D$. Therefore, the non-negative integers $v_1,\dots,v_n$ are as required.
\end{proof}

The following is a technical lemma needed in the proof of Theorem~\ref{MainThm:ACCforLCT}. It is inspired by the proof of \cite[Theorem 4.6]{Che22} or \cite[Claim 4.7]{LMX24}. It can be regarded as a computation of the coefficients of the different for adjunction to some invariant curves.

\begin{lemma}\label{lemma:forACC.adj.curves}
    Let $X$ be a normal threefold, and let $\cF$ is a foliation of rank $1$ on $X$ with simple singularities. Suppose $T\subset X$ is an $\cF$-invariant prime divisor  and let $\iota: S \to T$ be the normalization of $T$. Let $\cF_S$ be the restricted foliation of $\cF$ on $S$ and assume that $\cF_S$ is algebraically integrable. let $L$ be a general leaf of $\cF_S$, and let $L_X$ be the image of $L$ on $X$. Then the following hold:
    \begin{enumerate}
        \item[(1)] Let $P\in L$ be any closed point. If $\iota(P)$ is an lc center of $\cF$, then for any prime divisor $D$ on $X$, $2D$ is Cartier near $\iota(P)$. If $\iota(P)$ is not an lc center of $\cF$, then $\cF$ is terminal at $\iota(P)$, and $L$, as a divisor on $S$, is Cartier near $P$.
        \item[(2)] There exist non-negative integers $\lambda, q$ and positive integers $p_1, \dots, p_q$ such that
        \[
            K_{\cF}\cdot L_X = \deg K_{L^\nu} + \frac{\lambda}{2} + \sum_{k=1}^q \frac{p_k-1}{p_k},
        \]
        where $L^\nu \to L$ is the normalization of $L$.
        \item[(3)] For any prime divisor $B\neq T$ on $X$, and for any b-Cartier b-nef b-divisor $\bM$ on $X$, both $2B \cdot L_X$ and $2\bM_X \cdot L_X$ are non-negative integers.
    \end{enumerate}
\end{lemma}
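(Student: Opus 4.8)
The plan is to establish the three assertions in order, using throughout that since $\cF$ has simple singularities, Theorem~\ref{thm:simpleprop} gives that $X$ has cyclic quotient singularities (so $X$ is $\Q$-factorial klt) and $\cF$ is canonical. Write $\iota\colon S\to T$ for the normalization and $\cF_S$ for the restricted foliation; since $\cF_S$ is algebraically integrable, a general leaf $L$ is a general fibre of a rational fibration $S\bir Z$ onto a curve, so $\iota|_L\colon L\to L_X$ is birational (hence $L^\nu=L_X^\nu$) and $L$ avoids any prescribed finite set of points and any prescribed curve on $S$. For (1), fix $P\in L$ and set $Q=\iota(P)$. If $Q$ is not an lc centre of $\cF$, I would first argue $\cF$ is terminal at $Q$: otherwise, since $\cF$ is canonical there is a prime divisor $E$ over $X$ with $\cent_X E=\{Q\}$ and $a(E,\cF)=0$, and by \cite[Corollary III.i.4]{MP13} such an $E$ is $\cF$-invariant, so $\epsilon(E)=0$ and $E$ is an lc place centred at $Q$, a contradiction. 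Given $\cF$ terminal at $Q$, by \cite[Lemma 2.12]{CS20} the foliation is, analytically near $Q$ and after a quasi-\'etale cover, induced by a submersion with one-dimensional fibres; hence only finitely many $\cF$-invariant curves pass through $Q$, so only finitely many $\cF_S$-invariant curves pass through $P$, i.e.\ $\cF_S$ is non-dicritical at $P$, so a birational morphism making $\cF_S$ into a fibration onto a curve can be chosen to be an isomorphism near $P$, whence near $P$ the leaf $L$ is a general fibre of that fibration and in particular Cartier. If instead $Q$ is an lc centre, then by the classification of simple singularities (Example~\ref{ex:simple}, Theorem~\ref{thm:simpleprop}) either $X$ is smooth at $Q$ or $(X,\cF)$ is formally isomorphic at $Q$ to $(Y,\cG)$ of Example~\ref{ex:simple}, so $X$ has a smooth or $\Z/2\Z$-quotient singularity at $Q$; in either case the local divisor class group at $Q$ is $2$-torsion, so $2D$ is Cartier near $Q$ for every prime divisor $D$ on $X$ (cf.\ \cite[Claim 3.4]{LMX24}).

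For (2), I would invoke the foliated adjunction formula along the $\cF$-invariant curve $L_X$ (\cite[Proposition 2.16]{CS20}, cf.\ \cite[Proposition-Definition 3.12]{CS24}) to write $K_\cF|_{L_X^\nu}=K_{L_X^\nu}+\Delta$ with $\Delta\ge 0$ supported at finitely many points, and then read off the coefficients using (1). At a point $P$ with $\iota(P)$ an lc centre, $2K_\cF$ is Cartier near $\iota(P)$, so $\mu_P\Delta\in\tfrac12\Z_{\ge 0}$. At a point $P$ with $\iota(P)$ not an lc centre, $\cF$ is terminal there; the quasi-\'etale cover is unramified in codimension one, so $K_\cF$ pulls back to the Cartier canonical of a smooth foliation on the cover, and a Hurwitz computation along the fibre (leaf) gives $\mu_P\Delta=1-\tfrac1{p_P}$ for a positive integer $p_P$. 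Summing over $P$, taking degrees, writing $\lambda$ for twice the total contribution of the first type and $p_1,\dots,p_q$ for the non-trivial integers of the second type, and using $\deg K_{L^\nu}=\deg K_{L_X^\nu}$, yields the asserted identity.

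For (3), non-negativity follows the argument of Proposition~\ref{proposition:bdivoncurve}: picking $g\colon Y\to X$ on which $\bM$ descends and writing $g^*\bM_X=\bM_Y+F$ with $F\ge 0$ exceptional (negativity lemma, using $\Q$-factoriality of $X$), the strict transform $\widetilde{L_X}$ of the general curve $L_X$ lies in neither $\Supp F$ nor (as $B\neq T$) $\Supp B$, so $B\cdot L_X\ge 0$ and $2\bM_X\cdot L_X=2\bM_Y\cdot\widetilde{L_X}+2F\cdot\widetilde{L_X}\ge 0$ since $\bM_Y$ is nef. For integrality I would restrict to $S$: $2B$ is an integral Weil divisor and $2\bM_X=g_*(2\bM_Y)$ is an integral Weil divisor, so by \cite[Claim 3.4]{LMX24} (where the simple-singularities hypothesis enters) $2B|_S$ and $2\bM_X|_S$ are integral Weil divisors on $S$, and $2B\cdot L_X=(2B|_S)\cdot L$, $2\bM_X\cdot L_X=(2\bM_X|_S)\cdot L$ as Mumford intersection numbers on $S$. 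For an integral Weil divisor $W$ on $S$ with $L\not\subseteq\Supp W$, one has $W\cdot L=\sum_P (W\cdot L)_P$; by (1), at each $P\in L\cap\Supp W$ either $L$ is Cartier near $P$ (when $\iota(P)$ is not an lc centre) or, when $\iota(P)$ is an lc centre, $X$ is smooth or has a $\Z/2\Z$-quotient singularity at $\iota(P)$, forcing the integral Weil divisors $2B$ and $2\bM_X$ to be Cartier near $\iota(P)$ and hence $W$ to be Cartier near $P$ on $S$. Either way $(W\cdot L)_P\in\Z$, so $2B\cdot L_X,\ 2\bM_X\cdot L_X\in\Z$.

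The step I expect to be the main obstacle is the integrality part of (3): one must make precise the compatibility between Mumford's intersection theory on the normal surface $S$, the restriction of Weil divisors to the normalized curve $L^\nu$, and the localisation of intersection numbers at points, and then verify that the two-case dichotomy of (1) genuinely applies at every point of the general leaf $L$ — in particular that $L$ meets the lc-centre locus of $\cF$ only at points where $X$ is smooth or has a $\Z/2\Z$-quotient singularity. The classification argument in (1), especially the implication ``$Q$ not an lc centre $\Rightarrow$ $\cF$ terminal at $Q$'', also needs careful use of the structure theory of simple singularities of rank-one foliations on threefolds.
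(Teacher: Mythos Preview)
Your proposal is correct and follows essentially the same route as the paper: the dichotomy in (1) via the classification of simple singularities and non-dicriticality of $\cF_S$, the citation of \cite[Proposition 2.16]{CS20} for (2), and in (3) the reduction to Mumford intersection on $S$ of an integral Weil divisor against $L$, with integrality at each point coming from the two cases of (1). The only minor difference is in the non-negativity of $\bM_X\cdot L_X$: you argue directly on a model where $\bM$ descends (as in Proposition~\ref{proposition:bdivoncurve}), whereas the paper restricts to $S$ and uses the decomposition $2\bM_X|_S=2\bM^S_S+\Theta$ from Proposition~\ref{prop:Pre.AdjF.Simple} together with Lemma~\ref{lemma:bnefonsurface}; both work.
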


\begin{proof}
    Since $\cF$ has simple singularities, $X$ is $\Q$-factorial. Since $\cF_S$ is algebraically integrable, by \cite[3.5]{Dru21} we may assume that there exists a birational morphism $f: V\to S$ such that if $\cF_V= f^{-1}\cF_S$, there exists a fibration $g: V\to Z$ to a curve $Z$ such that $\cF_V$ is induced by $g$. 

    Let $P\in L$ be any closed point. If $\iota(P)$ is an lc center of $\cF$, then either $X$ is smooth at $\iota(P)$ or $\iota(P)$ is a $\Z/2\Z$-quotient singularity of $X$. In particular, for any prime divisor $D$ on $X$, $2D$ is Cartier near $\iota(P)$. If $\iota(P)$ is not an lc center of $\cF$, then by the definition of simple singularities, $\cF$ is terminal at $\iota(P)$. By \cite[Lemma 2.9]{CS20}
    , $\cF$ is induced by a fibration up to a quasi-\'etale cover near $\iota(P)$. Then there are only finitely many $\cF$-invariant curves on $X$ passing through $\iota(P)$, hence there are also only finitely many $\cF_S$-invariant curves on $S$ passing through $P$. In particular, $\cF_S$ is non-dicritical at $P$. Since $L$ is a general leaf of $\cF_S$, $f$ is an isomorphism over a neighborhood of $P$ and $L_V :=f^{-1}_*L$, as a general fiber of $g$, is a Cartier divisor on $V$. In particular, $L$ is Cartier near $P$. (1) follows. Moreover, (2) follows from (1) and \cite[Proposition 2.13]{CS20}.

    Now we prove (3). Since $B\neq T$ and $L$ is a general leaf of $\cF_S$, $B\cdot L_X\geq 0$. We may write 
    \[
        2\bM_X|_S =  2\bM^S_S  + \Theta,
    \]
    where $\bM^S = \bM|_S$ is the restricted b-divisor. By Proposition~\ref{prop:Pre.AdjF.Simple}, $\Theta$ is an effective Weil divisor on $S$. By Lemma~\ref{lemma:bnefonsurface}, $\bM^S_S \cdot L\geq 0$. Since $L$ is a general leaf, $\Theta \cdot L \geq 0$. Then 
    \[
    2\bM_X \cdot L_X = (2\bM_X)|_S \cdot L = 2\bM^S_S \cdot L + \Theta \cdot L \geq 0. 
    \] 
    It suffices to show that $2B \cdot L_X, 2\bM_X \cdot L_X \in \Z$. Note that $2B \cdot L_X = (2B)|_S \cdot L$ and $2\bM_X \cdot L_X = (2\bM_X)|_S \cdot L$ by the projection formula. By Proposition~\ref{prop:Pre.AdjF.Simple}, $2B|_S$ is a Weil divisor on $S$. Since $\bM$ is b-Cartier, $\bM^S$ is also b-Cartier. Then $\bM^S_S$ is a Weil divisor, hence so is $2\bM_X|_S$. Therefore by (1), for any closed point $P\in L$, either $2B$ (resp. $2\bM_X$) is Cartier near $\iota(P)$, in which case $2B|_S$ (resp. $2\bM_X|_S$) is Cartier near $P$, or $L$ is Cartier near $P$. It follows that $(2B)|_S \cdot L$ (resp. $(2\bM_X)|_S \cdot L$) is an integer. (3) follows.
\end{proof}

\subsection{Proof of Theorem~{\ref{MainThm:ACCforLCT}}}

\begin{lemma}\label{lemma:ACC1}
    Let $X$ be a normal projective threefold and let $(X, \cF,B,\bM)$ be an lc gfq of rank $1$ on $X$. Let $D$ be an effective $\R$-divisor on $X$ and $\bN$ be a b-nef b-divisor on $X$ such that $D + \bN_X$ is $\R$-Cartier. Assume that 
    \begin{itemize}
        \item $(X,\cF,B+D,\bM+\bN)$ is lc,
        \item $ (X,\cF,B + tD, \bM + t\bN)$ is not lc for any $t >1$, and
        \item $\lfloor B\rfloor = \lfloor B+ D\rfloor$.
    \end{itemize}
    Then there are birational morphisms $g: Y\to X'$ and $\pi:X' \to X$ such that 
    \begin{enumerate}
        \item[(1)] Let $\cF' = \pi^{-1}\cF$, let $B'$ and $ D'$ be the strict transforms of $B$ and $D$ on $X'$, respectively, and let $E' $ be the sum of all $\pi$-exceptional prime divisors which are not $\cF'$-invariant. Then we may write
        \[
        K_{\cF'} + B' + D' + E' + \bM_{X'} + \bN_{X'} = \pi^*(K_\cF + B + D + \bM_X  + \bN_X),
        \]
        and 
        \[
            K_{\cF'} + B'+ E' + \bM_{X'}= \pi^*(K_\cF + B + \bM_X ).
        \]
        \item[(2)] The birational morphism $g:Y \to X'$ is a divisorial contraction of a prime divisor $T$. Let $\cF_Y= g^{-1}\cF'$, then $T$ is $\cF_Y$-invariant. Moreover, $a(T,\cF,B+D,\bM+\bN)=0$ and $a(T,\cF,B,\bM)>0$. In particular, $-T$ is ample over $X'$.
        \item[(3)] Both $\cF'$ and $\cF_Y$ have simple singularities. In particular, $X'$ and $Y$ are $\Q$-factorial.
    \end{enumerate}
\end{lemma}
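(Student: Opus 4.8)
The plan is to build $\pi$ from a single application of Proposition~\ref{prop:pltblowup} and to build $g$ as the output of a relative MMP that extracts exactly one carefully chosen $\cF$-invariant divisor. \textbf{Locating the divisor to extract.} Since $(X,\cF,B+D,\bM+\bN)$ is lc while $(X,\cF,B+tD,\bM+t\bN)$ is not lc for $t>1$, the value $t=1$ is the threshold $\lct(X,\cF,B,\bM;D,\bN)$; testing lc-ness on a fixed log resolution of $(X,\Supp(B+D))$ to which $\bM,\bN$ both descend produces a prime divisor $T$ over $X$ with $a(T,\cF,B+D,\bM+\bN)=-\epsilon(T)$ and with $t\mapsto a(T,\cF,B+tD,\bM+t\bN)$ strictly decreasing. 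Writing this affine function as $a(T,\cF,B,\bM)-c_T t$ with $c_T>0$ gives $a(T,\cF,B,\bM)=-\epsilon(T)+c_T>-\epsilon(T)$. First I would rule out that $T$ is the strict transform of a prime divisor $T_X\subset X$: then $a(T,\cF,B+D,\bM+\bN)=-\mu_{T_X}(B+D)$ is model independent, so $\mu_{T_X}(B+D)=\epsilon(T)$; if $\epsilon(T)=0$ this forces $T_X\not\subset\Supp(B+D)$ and $c_T=0$, while if $\epsilon(T)=1$ the hypothesis $\lfloor B\rfloor=\lfloor B+D\rfloor$ forces $\mu_{T_X}B=1$ and $\mu_{T_X}D=0$, again $c_T=0$ — both contradicting $c_T>0$. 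Hence $T$ is exceptional over $X$.

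\textbf{The main obstacle: choosing $T$ to be $\cF$-invariant.} The key point, and the hardest step, is that $T$ may be taken with $\epsilon(T)=0$. If $\epsilon(T)=1$, then $a(T,\cF,B,\bM)=c_T-1>-1$, so Lemma~\ref{lemma:non-can,discrep} forces $a(T,\cF,B,\bM)\geq 0$; after first replacing $X$ by the model of Lemma~\ref{lemma:crepantmodel} we may assume $\cF$ has simple singularities, hence is canonical, and then \cite[Corollary III.i.4]{MP13} forces the centre $Z:=\cent_X(T)$ to be non-$\cF$-invariant of codimension $\geq 2$. Over the generic point of such a $Z$ the foliation $\cF$ is terminal (as $Z\not\subset\Sing^+\cF$, by \cite[Lemma 2.9]{CS20}) and hence, analytically locally, a fibration up to a quasi-\'etale cover (\cite[Lemma 2.12]{CS20}); exploiting this structure together with $\lfloor B\rfloor=\lfloor B+D\rfloor$ and fibration/divisorial adjunction, I would instead exhibit an $\cF$-invariant prime divisor $T$ over $Z$ (or over a closed point of $Z$) with $a(T,\cF,B+D,\bM+\bN)=0$ and $a(T,\cF,B,\bM)>0$. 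Once the threshold is realised by an $\cF$-invariant valuation $T$ with these two discrepancy values, everything below is formal.

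\textbf{Constructing $\pi$.} Apply Proposition~\ref{prop:pltblowup} to $(X,\cF,B,\bM)$ to get $\pi:X'\to X$ with $\cF':=\pi^{-1}\cF$ of simple singularities, $(X',\Theta)$ lc, $E'$ the sum of the non-$\cF'$-invariant $\pi$-exceptional primes, and $K_{\cF'}+B'+E'+\bM_{X'}=\pi^*(K_\cF+B+\bM_X)$ with $B'=\pi^{-1}_*B$. The second identity of (1) is then automatic: on a model where $\bN$ descends one has $f^*(D+\bN_X)=f^{-1}_*D+\bN_Y+G$ with $G\geq 0$ exceptional (by the negativity lemma), so $a(F,\cF,B+D,\bM+\bN)\leq a(F,\cF,B,\bM)$ for every prime divisor $F$ over $X$; combined with the lc-ness of the bigger pair this shows that every lc place of $(X,\cF,B,\bM)$ — in particular every $\pi$-exceptional prime — is an lc place of $(X,\cF,B+D,\bM+\bN)$ with the same $\epsilon$-value, whence $K_{\cF'}+B'+D'+E'+\bM_{X'}+\bN_{X'}=\pi^*(K_\cF+B+D+\bM_X+\bN_X)$ with $D'=\pi^{-1}_*D$. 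This gives (1) and the simple-singularities part of (3).

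\textbf{Constructing $g$.} On $X'$ the pair $(X',\cF',B'+D'+E',\bM+\bN)$ is lc and crepant to $(X,\cF,B+D,\bM+\bN)$, so the divisor $T$ from the first step, now viewed over $X'$, is an $\cF'$-invariant lc place with $a(T,\cF',B'+D'+E',\bM+\bN)=0$ and $a(T,\cF',B'+E',\bM)=a(T,\cF,B,\bM)>0$. Using Theorem~\ref{thm:simpleprop} I would choose a $\Q$-factorial birational morphism $h:W\to X'$ with $\cF_W:=h^{-1}\cF'$ of simple singularities, $\bM,\bN$ descending to $W$, $T$ a prime divisor on $W$, and $T$ the only $h$-exceptional lc place of $(X',\cF',B'+D'+E',\bM+\bN)$; set $\Delta_W:=h^{-1}_*(B'+D'+E')+\sum_F\epsilon(F)F$ over $h$-exceptional primes $F$, arranging $(W,\cF_W,\Delta_W)$ lc. Then $K_{\cF_W}+\Delta_W+\bM_W+\bN_W$ equals $h^*(K_{\cF'}+B'+D'+E'+\bM_{X'}+\bN_{X'})$ plus an effective $h$-exceptional divisor whose support is exactly the $h$-exceptional primes other than $T$, so a $(K_{\cF_W}+\Delta_W+\bM_W+\bN_W)$-MMP over $X'$ — which exists and terminates on some $Y$ with $\cF_Y$ of simple singularities by Proposition~\ref{prop:simpleMMP} — contracts precisely those divisors. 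Hence $g:Y\to X'$ is birational with $\Exc(g)=T$; since $Y$ is $\Q$-factorial this forces relative Picard number one, so $g$ is a divisorial contraction and $-T$ is $g$-ample. By crepancy, $T$ is $\cF_Y$-invariant with $a(T,\cF,B+D,\bM+\bN)=0$ and $a(T,\cF,B,\bM)>0$, and both $\cF'$ and $\cF_Y$ have simple singularities, so $X'$ and $Y$ are $\Q$-factorial klt by Theorem~\ref{thm:simpleprop}; this yields (2) and (3). The delicate point in this last step, besides the invariance discussed above, is arranging that $W$ extracts no superfluous lc place of the bigger pair, so that $g$ is a single divisorial contraction; this is handled by a careful choice of model, as in the proof of Proposition~\ref{prop:pltblowup}.
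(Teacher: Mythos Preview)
Your approach has two gaps.

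The first is in your ``main obstacle'': the local-structure argument for replacing a non-invariant threshold place by an invariant one is not a proof, and it is not clear how to complete it. You are in fact one line away from the correct argument. Having obtained $a(T,\cF,B,\bM)\geq 0$ from Lemma~\ref{lemma:non-can,discrep} under the assumption $\epsilon(T)=1$, apply the \emph{same} lemma to the lc gfq $(X,\cF,B+(1-\delta)D,\bM+(1-\delta)\bN)$ for small $\delta>0$: since
\[
a\bigl(T,\cF,B+(1-\delta)D,\bM+(1-\delta)\bN\bigr)=\delta\,a(T,\cF,B,\bM)+(1-\delta)(-1)\in(-1,0),
\]
this contradicts Lemma~\ref{lemma:non-can,discrep}. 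Hence $\epsilon(T)=0$ directly, with no appeal to local structure. This convexity trick is exactly how the paper handles invariance.

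The second gap is in constructing $g$: your requirement that $W\to X'$ have $T$ as its \emph{only} exceptional lc place of the bigger pair is not justified, and without it your MMP over $X'$ may leave several exceptional divisors on $Y$, so $g$ need not be a single divisorial contraction.

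The paper sidesteps both issues by organising the construction differently. It applies Proposition~\ref{prop:pltblowup} to the \emph{bigger} pair $(X,\cF,B+D,\bM+\bN)$, obtaining $\pi':X''\to X$ with $\cF''$ simple. The convexity argument above then shows that every non-$\cF''$-invariant $\pi'$-exceptional prime already has $a(\cdot,\cF,B,\bM)=-1$, so writing $\pi'^*(D+\bN_X)=D''+\bN_{X''}+P''$ with $P''\geq 0$ exceptional, the support of $P''$ consists of $\cF''$-invariant divisors and $P''\neq 0$ (use Proposition~\ref{prop:pltblowup}(5) to see a divisor over a codimension-$\geq 2$ lc center $Q$ of the bigger pair that is not an lc center of the smaller one). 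Now run a $(K_{\cF''}+B''+E''+\bM_{X''})$-MMP over $X$ via Proposition~\ref{prop:simpleMMP}: it contracts exactly $\Supp P''$, and since the output $X'$ is crepant over $X$ for the smaller pair, the \emph{last} step of the MMP is necessarily a divisorial contraction $g:Y\to X'$ of an $\cF_Y$-invariant prime $T$ with $a(T,\cF,B,\bM)>0$ and $a(T,\cF,B+D,\bM+\bN)=0$. Both identities of (1) hold on $X'$ because the MMP kills $P''$. This delivers (2) and (3) automatically, with no need to single out $T$ in advance or to control extra lc places on an auxiliary model.

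Your route to (1) via Proposition~\ref{prop:pltblowup} for the smaller pair is valid and pleasantly direct for (1) alone, but it decouples (1) from (2)--(3), which then costs you the unjustified extraction step.
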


\begin{proof}
    Since $(X,\cF,B+D ,\bM + \bN)$ is lc, both $B$ and $D$ have no $\cF$-invariant component. Then by the assumptions, there exists an lc center $Q$ of $(X,\cF,B+D,\bM + \bN)$ of codimension $\geq 2$ which is not an lc center for $(X,\cF,B,\bM)$. 

    By Proposition~\ref{prop:pltblowup}, there exists a birational morphism $\pi':X'' \to X$ such that we may write
    \[
    K_{\cF''} + B'' + D'' + E'' + \bM_{X''} + \bN_{X''} = \pi'^*(K_\cF + B + D + \bM_X + \bN_X),
    \]
    where $\cF'' = \pi'^{-1}\cF$, $B''$, $D''$ are the strict transforms of $B$, $D$ respectively on $X''$, and $E''$ is the sum of all $\pi'$-exceptional prime divisors which are not $\cF''$-invariant. We have that $\cF''$ has simple singularities and $X''$ is $\Q$-factorial. 
    
    For any $\Gamma \subset \Supp E''$,  we have 
    \[
    a(\Gamma,\cF,B + D, \bM + \bN) = -1.
    \]
    Assume that $a(\Gamma,\cF,B,\bM)>-1$. Since $(X,\cF,B,\bM)$ is lc, by Lemma~\ref{lemma:non-can,discrep}, we have 
    \[
    a(\Gamma):= a(\Gamma,\cF,B,\bM) \geq 0.
    \]
    Then for $0 < \delta < \frac{1}{a(\Gamma) + 1}$, we have
    \begin{align*}
        & a(\Gamma, \cF, B + (1-\delta)D,\bM +(1-\delta)\bN)\\ = \,& \delta a(\Gamma, \cF, B,\bM) + (1-\delta) a(\Gamma, \cF, B + D,\bM +\bN) \\ = \,& \delta (a(\Gamma)+1) - 1 \in (-1,0),
    \end{align*}  
    which contradicts with Lemma~\ref{lemma:non-can,discrep} and the fact that $(X,\cF, B + (1-\delta)D,\bM +(1-\delta)\bN)$ is lc. Thus $a(\Gamma,\cF,B,\bM) = -1$.

    By the negativity lemma, we may write 
    \[
    D'' + P'' +  \bN_{X''}  = \pi'^*(D + \bN_{X}),
    \]
    where $P''\geq 0$ is a $\pi'$-exceptional $\R$-divisor on $X''$. Then we have 
    \[
    K_{\cF''} + B''+ E'' -P''+\bM_{X''} = \pi'^*(K_\cF + B + \bM_X).
    \]
    Note that $P''$ and $E''$ have no common component as $a(\Gamma,\cF,B ,\bM) = -1$ for each prime divisor $\Gamma \subset \Supp E''$. In particular, each component of $P''$ is $\cF''$-invariant and exceptional over $X$. Moreover, by Proposition~\ref{prop:pltblowup} (4), we may assume that there exists a prime divisor $T''$ on $X''$, centered on $Q$, such that $\mu_{T''}E'' = \epsilon(T'')$. Then $T''$ must be $\cF''$-invariant and contained in $\Supp P''$, as $Q$ is not an lc center for $(X, \cF, B, \bM)$. In particular, $P'' \neq 0$. 

    Since $(X'',\cF'',B''+D'' +E'',\bM +  \bN)$ is lc and $X''$ is $\Q$-factorial, we have $(X'',\cF'',B'' +E'',\bM )$ is lc. By Proposition~\ref{prop:simpleMMP}, we may run a $(K_{\cF''}+ B'' + E'' + \bM_{X''})$-MMP over $X$ which terminates at a minimal model $\phi:X'' \bir X'$ over $X$. Let $\pi:X' \to X$ be the induced morphism. Note that $\phi$ contracts exactly prime divisors contained in $\Supp P''$. Let $\cF' = \pi^{-1}\cF$, and let $B'$, $D'$, $E'$ be the strict transform of $B''$, $D''$, $E''$ on $X'$, respectively. We have
    \[
    K_{\cF'} + B' + E' + \bM_{X'}  = \pi^*(K_\cF + B + \bM_X).
    \]
    Note that 
    \[
    K_{\cF''} + B'' + D'' + E'' + \bM_{X''} + \bN_{X''}\sim_{\R,X} 0,
    \]
    hence (1) holds. Moreover, since $P''\neq 0$, some step of this MMP must be a divisorial contraction. Since $K_{\cF'} + B' + E' + \bM_{X'} \sim_{\R,X} 0$, the last step of this sequence of MMP is a divisorial contraction. Let $g: Y \to X'$ be the divisorial contraction which is the last step of the sequence of MMP associated with the $g$-exceptional prime divisor $T$. Note that the strict transform of $T$ on $X''$ is contained in $\Supp P''$, hence $\epsilon(T) = 0$, $a(T,\cF,B+D,\bM+\bN)=0$, and $a(T,\cF,B,\bM)>0$. In particular, $-T$ is ample over $X'$ and (2) follows. (3) follows by Proposition~\ref{prop:simpleMMP}. 
\end{proof}

\begin{proof}[Proof of Theorem~{\upshape\ref{MainThm:ACCforLCT}}]
    \textit{Step 1}. Assume the theorem does not hold, then there exists a sequence of rank $1$ gfqs $(X_i,\cF_i,B_i,\bM^i)$ and $D_i$, $\bN^i$ satisfying the assumptions of the theorem such that if 
    \[
    t_i = \lct (X_i,\cF_i,B_i,\bM^i;D_i,\bN^i),
    \]
    then $\{t_i\}_{i=1}^\infty$ form a strictly increasing sequence of numbers. We may assume $0<t_i<+\infty$ for each $i$. Then $(X_i,\cF_i,B_i+ t_i D_i,\bM^i + t_i \bN^i)$ is lc, and $(X_i,\cF_i,B_i+ t D_i,\bM^i + t \bN^i)$ is not lc for $t >t_i$. In particular, both $B_i$ and $D_i$ have no $\cF_i$-invariant components. If $\lfloor B_i + t_iD_i \rfloor \neq \lfloor B_i \rfloor $ for infinitely many $i$, then after replacing $\{t_i\}$ with a subsequence, we may assume that for each $i$, $\lfloor B_i + t_iD_i \rfloor \neq \lfloor B_i \rfloor $, and there exists a prime divisor $\Gamma_i$ such that $\mu_{\Gamma_i}(B_i + t_iD_i) = 1 $ and $\mu_{\Gamma_i}B_i <1$. Then we have \[
    t_i = \frac{1-\mu_{\Gamma_i}B_i}{\mu_{\Gamma_i}D_i}.
    \]
    Since $\mu_{\Gamma_i}B_i$, $\mu_{\Gamma_i}D_i\in I$, we have that $t_i$ belongs to a fixed ACC set which leads to a contradiction. Thus we may assume that $\lfloor B_i + t_iD_i \rfloor = \lfloor B_i \rfloor $ for each $i$. In particular, there exists an lc center $Q_i$ of $(X_i,\cF_i, B_i + t_iD_i,\bM^i + t_i\bN^i)$ of codimension $\geq 2$ which is not an lc center for $(X_i,\cF_i,B_i,\bM^i) $. 

    \medskip

    \textit{Step 2.} 
    In this step we replace $X_i$ with a suitable birational model. By Lemma~\ref{lemma:ACC1}, for each $i$, there exists birational morphisms $\pi_i: X_i' \to X_i$ and $g_i:Y_i \to X_i'$ such that the following hold:
    \begin{itemize}
        \item Let $\cF_i' = \pi_i^{-1}\cF_i$, let $B_i'$ and $ D_i'$ be the strict transforms of $B_i$ and $D_i$,  respectively, and let $E_i' $ be the sum of all $\pi_i$-exceptional prime divisors which are not $\cF_i'$-invariant. Then $\cF_i'$ has simple singularities, and we may write
        \[
            K_{\cF_i'} + B_i' + E_i' + t_iD_i' + \bM^i_{X_i'} + t_i\bN^i_{X_i'} = \pi^*(K_{\cF_i} + B_i + t_iD_i + \bM^i_{X_i}  + t_i\bN^i_{X_i}),
        \]
        and
        \[
            K_{\cF_i'} + B_i' + E_i' + \bM^i_{X_i'} = \pi^*(K_{\cF_i} + B_i + \bM^i_{X_i}).
        \]
        \item The birational morphism $g:Y_i \to X_i'$ is a divisorial contraction of a prime divisor $T_i$,  $\cF_{Y_i}= g_i^{-1} \cF_i'$ has simple singularities, and $T_i$ is $\cF_{Y_i}$-invariant. Moreover, $a(T_i,\cF_i,B_i+t_iD_i,\bM^i+t_i\bN^i)=0$ and $e_i:=a(T_i,\cF_i,B_i,\bM^i)>0$. In particular, $-T_i$ is ample over $X_i'$.
    \end{itemize}
    In particular, $t_i = \lct(X_i',\cF_i', B_i' + E_i', \bM^i; D_i', \bN^i)$. We may assume that $1 \in I$. After replacing $(X_i,\cF_i,B_i,\bM^i;D_i,\bN^i)$ with $(X_i',\cF_i', B_i' + E_i', \bM^i; D_i', \bN^i)$, we may assume that $\cF_i$ has simple singularities and there exists a birational morphism $g_i: Y_i \to X_i$ which is a divisorial contraction of the prime divisor $T_i$ with $\epsilon(T_i) = 0$. Let $\cF_{Y_i} = g^{-1}\cF_i$, and let $B_{Y_i}$, $D_{Y_i}$ be the strict transform of $B_i$ and $D_i$ on $Y_i$ respectively, we may write
    \[ 
    K_{\cF_{Y_i}} + B_{Y_i} + t_i D_{Y_i} + \bM^i_{Y_i} + t_i \bN^i_{Y_i} = g_i^* ( K_{\cF_i} + B_i +t_iD_i + \bM^i_{X_i} + t_i \bN^{i}_{X_i}),    
    \]
    and 
    \[
        K_{\cF_{Y_i}} + B_{Y_i} -e_i T_i + \bM^i_{Y_i}  = g_i^* ( K_{\cF_i} + B_i  + \bM^i_{X_i} ).
    \]
    In particular, $-T_i$ is ample over $X_i$. Moreover, by the two equalities above, we have
    \[
    D_{Y_i} + \bN^i_{Y_i} +\frac{e_i}{t_i}T_i = g_i^*(D_i + \bN^i_{X_i}).
    \]

    \medskip

    \textit{Step 3.} In this step we apply the divisorial adjunction to $T_i$ and introduce some necessary notations. Let $S_i \to T_i$ be the normalization of $T_i$. Apply Proposition~\ref{prop:Pre.AdjF.Simple} for $(Y_i,\cF_{Y_i},B_{Y_i} + t_iD_{Y_i},\bM^i + t_i \bN^i)$ and $T_i$, we may write

    \[
    K_{\cF_{S_i}} + \Delta_i + \mathbf{\Psi}^i_{S_i} := (K_{\cF_{Y_i}}+ B_{Y_i}+t_iD_{Y_i}+ \bM^i_{Y_i} + t_i \bN^i_{Y_i})|_{S_i},
    \] 
    where $\cF_{S_i}$ is the restricted foliation of $\cF_{Y_i}$ on $S_i$, $\Delta_i \geq 0$, and $\mathbf{\Psi}^i = \bM^{S_i} + t_i \bN^{S_i}$. Here $\bM^{S_i} = \bM^i|_{S_i}$, $\bN^{S_i} = \bN^i|_{S_i}$. In particular, $\bM^{S_i}, \bN^{S_i}$ are both $I$-linear combination of b-nef b-Cartier b-divisors on $S_i$. 

    Now we give some notations used in next steps. We define
    \[
    K_i := \{\,a + t_ib \mid a,b\in I\cup \{0\}\, \},
    \]
    then $K_i$ is a DCC set. The coefficients of $B_{Y_i}+ t_iD_{Y_i}$ belong to $K_i$, and $\bM^i + t_i\bN^i$ is a $K_i$-linear combination of b-nef b-Cartier b-divisors. Then by Proposition~\ref{prop:Pre.AdjF.Simple}, the coefficients of $\Delta_i$ belong to $\frac{1}{2}(K_i)_+$. Let $K = \bigcup_{i=1}^\infty K_i$, then $K$ is a DCC set as $\{\,t_i\mid i \geq 1\,\}$ is DCC. In particular, $\Delta_i \in \frac{1}{2}K_+$.

    By Proposition~\ref{prop:Pre.AdjF.Simple}, there exists a $\Q$-divisor $G_i\geq 0$ on $S_i$ such that $K_{\cF_{S_i}} + G_i = K_{\cF_{Y_i}}|_{S_i}$. Let $D_{S_i} = D_{Y_i}|_{S_i}$. By the negativity lemma, we may write $\bM^i_{Y_i}|_{S_i} = \bM^{S_i}_{S_i} + \Theta_i$ and $\bN_{Y_i}^i |_{S_i} = \bN_{S_i}^{S_i} + \Xi_i $ where $\Theta_i, \Xi_i\geq 0 $. Let $B_{S_i} = G_i + B_{Y_i}|_{S_i} + \Theta_i \geq 0 $, then we have \[
        K_{\cF_{S_i}} + B_{S_i} + \bM_{S_i}^{S_i} := (K_{\cF_{Y_i}} + B_{Y_i}  + \bM_{Y_i}^i)|_{S_i},
        \]
    and \[
    \Delta_i = B_{S_i} + t_i(D_{S_i} + \Xi_i).
    \]
    \medskip

    \textit{Step 4.} In this step we assume that $\dim Q_i = 1$ for each $i$. Let $Q_i^\nu \to Q_i$ be the normalization of $Q_i$, and let $F_i$ be a general fiber of $S_i \to Q_i^\nu$. By the construction,
    \[
    K_{\cF_{S_i}} + \Delta_i + \mathbf{\Psi}^i_{S_i} = (K_{\cF_{Y_i}}+ B_{Y_i}+t_iD_{Y_i}+ \bM^i_{Y_i} + t_i \bN^i_{Y_i})|_{S_i} \equiv_{Q_i^\nu} 0.
    \]
    Since $F_i$ is general, we have that $(B_{Y_i}|_{S_i} )\cdot F_i \geq 0$ and $\Theta_i \cdot F_i\geq 0$. By Lemma~\ref{lemma:bnefonsurface}, $\bM_{S_i}^{S_i} \cdot F_i \geq 0$. Then $\bM^i_{Y_i}|_{S_i}\cdot F_i \geq 0$. Since $-T_i$ is ample over $X_i$, $T_i|_{S_i}\cdot F_i <0$, then 
    \[
    (D_{Y_i} + \bN_{Y_i}^i)|_{S_i}\cdot F_i  = - \frac{e_i}{t_i}(T_i|_{S_i})\cdot F_i >0.
    \] 
    It follows that $K_{\cF_{Y_i}}|_{S_i} \cdot F_i <0$. Since $F_i$ is general, $G_i\cdot F_i \geq 0$, which implies that $K_{\cF_{S_i}} \cdot F_i <0$. We may assume that $F_i$ is disjoint with $\Sing X \cup \Sing\cF$. Then by  
    \cite[Lemma 1.14]{Che22}, $F_i$ is $\cF_{S_i}$-invariant and $K_{\cF_{S_i}}\cdot F_i = -2$. It follows that 
    \[
        (\Delta_i+\mathbf{\Psi}_{S_i}^i)\cdot F_i = 2.
    \]

    Assume that $\bN_{S_i}^{S_i}\cdot F_i\neq 0$ for all $i$. Then we have $(\Delta_i + \bM^{S_i}_{S_i} + t_i \bN^{S_i}_{S_i})\cdot F_i = 2$. Since $F_i$ is general, we have $H_i \cdot F_i$, $ \mathbf{R}_{S_i}\cdot F_i \in \Z_{\geq 0}$ for every prime divisor $H_i$ contained in $\Supp \Delta_i$ and every b-Cartier b-nef b-divisor $\mathbf{R}$ appears in $\bM^{S_i}$ and $\bN^{S_i}$. Since $\Delta_i\in \frac{1}{2} K_+$ and $\bM^{S_i},\bN^{S_i}\in I$, by Lemma~\ref{lemma:finiteset}, there exists $\nu_i\in I\setminus \{0\}$, occurs as a coefficient of $\bN^{S_i}$, such that $t_i\nu_i$ belongs to a finite set that is independent of $i$, which contradicts with the fact that $\{t_i\}_{i=1}^\infty$ is strictly increasing.

    Now we may assume that $\bN^{S_i}_{S_i} \cdot F_i = 0$ for all $i$. 
    Let $\Delta_i(t) =  B_{S_i} + t(D_{S_i} + \Xi_i)$, then  $\Delta_i = \Delta_i(t_i)$. Note that
    \[
    (D_{S_i}+ \Xi_i)\cdot F_i = (D_{S_i} +  \bN_{S_i}^{S_i} + \Xi_i)\cdot F_i = (D_{Y_i} + \bN^i_{Y_i})|_{S_i} \cdot F_i >0,
    \]
    and 
    \begin{align*}
        (B_{S_i} + t_i(D_{S_i}+ \Xi_i) + \bM_{S_i}^{S_i})\cdot F_i = (\Delta_i(t_i) + \bM_{S_i}^{S_i})\cdot F_i = (\Delta_i + \mathbf{\Psi}_{S_i}^i)\cdot F_i = 2
    \end{align*}
    as $\bN^{S_i}_{S_i} \cdot F_i = 0$.
    Since $F_i$ is general, $H_i \cdot F_i$, $ \mathbf{R}_{S_i}\cdot F_i \in \Z_{\geq 0}$ for every prime divisor $H_i$ contained in $\Supp \Delta_i$ and every b-Cartier b-nef b-divisor $\mathbf{R}$ appears in $\bM^{S_i}$. Note that $\Delta_i \in \frac{1}{2}K_+$ and $\bM^{S_i}\in I$, hence by Lemma~\ref{lemma:finiteset}, there exists a linear function $f_i(t)$ of $t$, which occurs as a coefficient of $\Delta_i(t)$ and is not a constant function of $t$, such that $f_i(t_i)$ belongs to a finite set independent of $i$. Moreover, $f_i(t_i)\in \frac{1}{2}(K_i)_+ \setminus \{0\}$. We may write $2f_i(t_i) = \sum_{j=1}^n (a^i_j + t_ib^i_j)$, where $a^i_j,b^i_j\in I\cup \{0\}$ such that $\sum_j b^i_j \neq 0$. We may assume that $b_1^i \neq 0$ for each $i$. By Lemma~\ref{lemma:finiteset}, $a^i_1 + t_ib^i_1$ belongs to a finite set independent of $i$, which again contradicts with the fact that $\{t_i\}_{i=1}^\infty$ is strictly increasing.

    \medskip

    \textit{Step 5.} Now we may assume that $\dim Q_i = 0$ for all $i$. Then we have
    \[
        K_{\cF_{S_i}} + \Delta_i + \mathbf{\Psi}^i_{S_i}  \equiv 0.
    \]
    Let $L_i$ be a general leaf of $\cF_{S_i}$. Similarly to Step 4, we have
    \[
    \left(B_{Y_i}|_{S_i}+ \bM^i_{Y_i}|_{S_i} + G_i \right)\cdot L_i \geq 0.
    \]
    Since $-T_i$ is ample over $X_i$, $T_i|_{S_i}\cdot L_i<0$.
    Then $K_{\cF_{S_i}}\cdot L_i <0$ as $ (D_{Y_i} + \bN_{Y_i}^i)|_{S_i}\cdot L_i = -\frac{e_i}{t_i} T_i|_{S_i}\cdot L_i >0$. In particular, $K_{\cF_{S_i}}$ is not numerically trivial. By Lemma~\ref{lemma:surfacemodel}, there exists $\alpha_i: \Tilde{S_i} \to S_i$ such that $K_{\cG_i} + R_i = \alpha_i^* K_{\cF_{S_i}}$, where $\tilde{S_i}$ is $\Q$-factorial, $\cG_i = \alpha_i^{-1}\cF_{S_i}$ and $R_i\geq 0$ is exceptional over $S_i$. Note that $K_{\cF_{S_i}}$ may not be $\Q$-Cartier but we may define the pullback in the sense of Mumford. Let $\tilde{L_i}$ be the strict transform of $L_i$ on $\tilde{S_i}$. Then $R_i \cdot \tilde{L_i}\geq 0 $, and 
    \[
    K_{\cG_i} \cdot \tilde{L_i} = \alpha_i^*K_{\cF_{S_i}}\cdot \tilde{L_i} - R_i \cdot \tilde{L_i} = K_{\cF_{S_i}}\cdot L_i - R_i \cdot \tilde{L_i}<0.
    \]
    Since $K_{\cG_i} + R_i + \alpha_i^*(\Delta_i+ \mathbf{\Psi}^i_{S_i}) \equiv 0$, $R_i + \alpha_i^*(\Delta_i + \mathbf{\Psi}^i_{S_i}) $ is pseudo-effective, and $K_{\cG_i}$ is $\Q$-Cartier and not numerically trivial, we have $K_{\cG_i}$ is not pseudo-effective and then by \cite[Theorem 1.1]{CP19} or \cite[Lemma 2.21]{CS21}, $\cG_i$ is algebraically integrable and general leaves of $\cG_i$ are rational curves. Thus $\cF_{S_i}$ is also algebraically integrable, and general leaves of $\cF_{S_i}$ are rational curves.

    Let $L_{Y_i}$ be the image of $L_i$ on $Y_i$. We have
    \begin{equation}
        (K_{\cF_{Y_i}} + B_{Y_i} + t_iD_{Y_i} + \bM^i_{Y_i} + t_i \bN^i_{Y_i}) \cdot L_{Y_i} = 0. \tag{$\ast$}
    \end{equation}
    Assume that \[
    B_{Y_i} + t_iD_{Y_i} = \sum_
    {j=1}^{m_i} \alpha_{i,j}C_{i,j},\qquad \bM^i + t_i\bN^i = \sum_{l=1}^{n_i}\beta_{i,l} \mathbf{L}^{i,l},
    \]
    where $m_i,n_i \geq 0$, $C_{i,j}$ are prime divisors on $Y_i$ distinct from $T_i$, $\mathbf{L}^{i,l}$ are b-Cartier b-divisors on $Y_i$, $\alpha_{i,j}, \beta_{i,l} \in K_i \subset K$.
    By Lemma~\ref{lemma:forACC.adj.curves}, there exists a non-negative integer $\lambda_i, q_i$ and positive integers $p_{i,k}$ for $1 \leq k \leq q_i$ such that ($\ast$) can be written as 
    \[
    \frac{\lambda_i}{2} + \sum_{k=1}^{q_i} \frac{p_{i,k}-1}{p_{i,k}} + \sum_{j=1}^{m_i} \frac{\alpha_{i,j} }{2}(2C_{i,j}\cdot L_{Y_i}) + \sum_{l=1}^{n_i} \frac{\beta_{i,l}}{2} (2\bL^{i,l}_{Y_i} \cdot L_{Y_i}) = 2,
    \]
    and $(2C_{i,j}\cdot L_{Y_i})$ and $(2\bL^{i,l}_{Y_i} \cdot L_{Y_i})$ are non-negative integers. Let $\alpha_{i,j}' := \alpha_{i,j}$ if $(2C_{i,j}\cdot L_{Y_i})>0$ and $\alpha_{i,j}' := 0$ otherwise. Let $\beta_{i,l}' := \beta_{i,l}$ if $(2\bL^{i,l}_{Y_i} \cdot L_{Y_i}) > 0$ and $\beta_{i,l}' := 0$ otherwise.
    
    Note that \(
        \frac{1}{2}K \cup \left\{\,\frac{m-1}{m}\,|\, m \in \Z_{>0}\,\right\}
    \) is a DCC set. By Lemma~\ref{lemma:finiteset}, $\alpha_{i,j}', \beta_{i,l}'$ belong to a finite set $J$ independent of $i$. Note that $(D_{Y_i} + \bN^i_{Y_i}) \cdot L_{Y_i} = -\frac{e_i}{t_i} T_i \cdot L_{Y_i} >0$, there exists $a_i\in I\cup  \{0\}$, $b_i \in I \setminus \{0\}$, such that $a_i+ t_ib_i$, which equals to one of the $\alpha_{i,j}'$ and $\beta_{i,l}'$, belongs to the finite set $J$. It follows that $\{t_i\}$ belongs to an ACC set independent of $i$ which leads to a contradiction.
\end{proof}

\bibliographystyle{alpha}
\bibliography{ref}

@misc{ACSS22,
  title     = {Positivity of the {Moduli} {Part}},
  author    = {Ambro, F. and Cascini, P. and Shokurov, V. and Spicer, C.},
  year      = {2021},
  note = {arXiv preprint \href{https://arxiv.org/abs/2111.00423}{arXiv:2111.00423}},
}

@article{CS20,
  title   = {On the {MMP} for rank one foliations on threefolds},
  volume  = {13},
  number  = {e20},
  journal = {Forum Math. Pi },
  author  = {Cascini, P. and Spicer, C.},
  year    = {2025},
  pages   = {1--38},
  doi = {https://doi.org/10.1017/fmp.2025.10013},
}

@article{Spi20,
  title   = {Higher-dimensional foliated {M}ori theory},
  volume  = {156},
  number  = {1},
  journal = {Compos. Math. },
  author  = {Spicer, C.},
  year    = {2020},
  pages   = {1--38},
  doi = {10.1112/S0010437X19007681},
}

@misc{CHLX23,
  title     = {Minimal model program for algebraically integrable foliations and generalized pairs},
  author    = {Chen, G. and Han, J. and Liu, J. and Xie, L.},
  year      = {2023},
  note      = {arXiv preprint \href{https://arxiv.org/abs/2309.15823}{arXiv:2309.15823}},
}

@article{CS24,
  title   = {Foliation adjunction},
  volume  = {391},
  number  = {4},
  journal = {Math. Ann.},
  author  = {Cascini, P. and Spicer, C.},
  year    = {2025},
  pages   = {5695--5727},
  doi = {10.1007/s00208-024-03067-5},
}

@incollection{BM16,
  address   = {Cham},
  title     = {Rational {Curves} on {Foliated} {Varieties}},
  booktitle = {Foliation {Theory} in {Algebraic} {Geometry}},
  publisher = {Springer},
  author    = {Bogomolov, F. and McQuillan, M.},
  year      = {2016},
  doi       = {10.1007/978-3-319-24460-0_2},
  pages     = {21--51}
}

@article{MP13,
  title   = {Almost Étale resolution of foliations},
  volume  = {95},
  number  = {2},
  journal = {J. Differential Geom. },
  author  = {McQuillan, M. and Panazzolo, D.},
  year    = {2013},
  pages   = {279--319},
doi = {10.4310/jdg/1376053448},
}

@book{KM98,
  address      = {Cambridge},
  series     = {Cambridge Tracts in Mathematics},
  title      = {Birational Geometry of Algebraic Varieties},
  publisher  = {Cambridge University Press},
  author     = {Kollár, J. and Mori, S.},
  year       = {1998},
}

@book{Kol13,
  address     = {Cambridge},
  series     = {Cambridge Tracts in Mathematics},
  title      = {Singularities of the Minimal Model Program},
  publisher  = {Cambridge University Press},
  author     = {Kollár, J. },
  year       = {2013},
  collection = {Cambridge Tracts in Mathematics}
}

@misc{HJLL24,
  title     = {Volume of algebraically integrable foliations and locally stable families},
  author    = {Han, J. and Jiao, J. and Li, M. and Liu, J.},
  year      = {2024},
  note      = {arXiv preprint \href{https://arxiv.org/abs/2406.16604}{arXiv:2406.16604}},
}

@article{Sak84,
  title   = {Weil divisors on normal surfaces},
  volume  = {51},
  issn    = {0012-7094},
  doi     = {10.1215/S0012-7094-84-05138-X},
  number  = {4},
  urldate = {2024-11-14},
  journal = {Duke Math. J.},
  author  = {Sakai, F.},
  pages   = {877-887},
  year    = {1984}
}

@article{BM97,
  title   = {Canonical desingularization in characteristic zero by blowing up the maximum strata of a local invariant},
  volume  = {128},
  number  = {2},
  journal = {Invent. Math.},
  author  = {Bierstone, E. and Milman, P.D.},
  year    = {1997},
  pages   = {207--302},
doi = {doi.org/10.1007/s002220050141},
}

@article{Dru21,
  title   = {Codimension 1 foliations with numerically trivial canonical class on singular spaces},
  volume  = {170},
  number  = {1},
  journal = {Duke Math. J.},
  author  = {Druel, S.},
  year    = {2021},
  pages   = {95--203},
doi = {10.1215/00127094-2020-0041}, 
}

@article{CS21,
  title   = {{MMP} for co-rank one foliations on threefolds},
  volume  = {225},
  issn    = {0020-9910, 1432-1297},
  doi     = {10.1007/s00222-021-01037-1},
  number  = {2},
  journal = {Invent. Math. },
  author  = {Cascini, P. and Spicer, C.},
  year    = {2021},
  pages   = {603--690}
}

@misc{McQ04,
  title  = {Semi-stable reduction of foliations},
  author = {McQuillan, M.},
  year   = {2005},
  note   = {IHES pre-print, IHES/M/05/02}
}

@article{McQ08,
  author  = {McQuillan, M.},
  title   = {Canonical Models of Foliations},
  journal = {Pure Appl. Math. Q. },
  volume  = {4},
  number  = {3, part 2},
  year    = {2008},
  pages   = {877--1012},
doi = {10.4310/PAMQ.2008.v4.n3.a9},
}

@book{Ben93,
  address     = {Cambridge},
  series     = {London Mathematical Society Lecture Note Series},
  title      = {Polynomial Invariants of Finite Groups},
  publisher  = {Cambridge University Press},
  author     = {Benson, D. J.},
  year       = {1993},
  collection = {London Mathematical Society Lecture Note Series},

}

@misc{BGS11,
  title     = {Sur le produit de variétés localement factorielles ou {Q}-factorielles},
  author    = {Boissi\'ere, S. and Gabber, O. and Serman, O.},
  year      = {2011},
  note      = {arXiv preprint \href{https://arxiv.org/abs/1104.1861}{arXiv:1104.1861}},
}

@book{Roc70,
address = {Princeton},
  title     = {Convex Analysis},
  isbn      = {978-1-4008-7317-3},
  publisher = {Princeton University Press},
  author    = {Rockafellar, R. T.},
  year      = {1970},
}

@article{Mum61,
  title   = {The topology of normal singularities of an algebraic surface and a criterion for simplicity},
  volume  = {9},
  issn    = {0073-8301, 1618-1913},
  doi     = {10.1007/BF02698717},
  number  = {1},
  urldate = {2025-03-07},
  journal = {Publ. Math. Inst. Hautes Etudes Sci. },
  author  = {Mumford, D.},
  year    = {1961},
  pages   = {5--22}
}

@article{BZ16,
  title   = {Effectivity of {Iitaka} fibrations and pluricanonical systems of polarized pairs},
  volume  = {123},
  issn    = {1618-1913},
  url     = {https://doi.org/10.1007/s10240-016-0080-x},
  doi     = {10.1007/s10240-016-0080-x},
  number  = {1},
  journal = {Publ. Math. Inst. Hautes
Etudes Sci. },
  author  = {Birkar, C. and Zhang, D.},
  year    = {2016},
  pages   = {283--331}
}

@article{Miy87,
  title   = {Deformations of a morphism along a foliation and applications},
  volume  = {46},
  journal = {In Algebraic geometry, Bowdoin, 1985, Proc. Sympos. Pure Math.},
  author  = {Miyaoka, Y.},
  year    = {1987},
  pages   = {245--268},
doi = {10.1090/pspum/046.1/927960}
}

@article{LMX24,
  title   = {Uniform rational polytopes of foliated threefolds and the global {ACC}},
  volume  = {109},
  issn    = {0024-6107, 1469-7750},
  doi     = {10.1112/jlms.12950},
  number  = {6},
  journal = {J. Lond. Math. Soc.},
  author  = {Liu, J. and Meng, F. and Xie, L.},
  year    = {2024},
  pages   = {e12950}
}

@misc{Che22,
  title     = {{ACC} for foliated log canonical thresholds},
  author    = {Chen, Y.},
  year      = {2022},
  note      = {arXiv preprint \href{https://arxiv.org/abs/2202.11346}{arXiv:2202.11346}}
}

@article{CP19,
  title   = {Foliations with positive slopes and birational stability of orbifold cotangent bundles},
  volume  = {129},
  number  = {1},
  journal = {Publ. Math. Inst. Hautes
Etudes Sci. },
  author  = {Campana, F. and Păun, M.},
  year    = {2019},
  pages   = {1--49},
doi = {10.1007/s10240-019-00105-w},
}

@book{Bru15,
  address   = {Cham},
  title     = {Birational {Geometry} of {Foliations}},
  publisher = {Springer},
  series    = {IMPA Monographs},
  volume    = {1},
  author    = {Brunella, M.},
  year      = {2015}
}

@article{SS21,
  title   = {Local and global applications of the {Minimal} {Model} {Program} for co-rank 1 foliations on threefolds},
  volume  = {24},
  number  = {11},
  journal = {J. Eur. Math. Soc.},
  author  = {Spicer, C. and Svaldi, R.},
  year    = {2021},
  pages   = {3969--4025},
  doi = {10.4171/JEMS/1173},

}

@misc{CM24,
  title  = {Log {Canonical} {Minimal} {Model} {Program} for corank one foliations on {threefolds}},
  author = {Chaudhuri, P. and Mascharak, R.},
  year   = {2024},
  note   = {arXiv preprint \href{https://arxiv.org/abs/2410.05178}{arXiv:2410.05178}}
}

@misc{liu_minimal_2024,
  title  = {Minimal model program for algebraically integrable foliations on klt varieties},
  author = {Liu, J. and Meng, F. and Xie, L.},
  year   = {2024},
  note   = {arXiv preprint \href{https://arxiv.org/abs/2404.01559}{arXiv:2404.01559}}
}

@incollection{cascini_mmp_2023,
  author    = {Cascini, P. and Spicer, C.},
  title     = {{MMP} for algebraically integrable foliations},
  booktitle = {Higher Dimensional Algebraic Geometry: A Volume in Honor of V. V. Shokurov},
  editor    = {Hacon, C. and Xu, C.},
  publisher = {Cambridge University Press},
  year      = {2025},
  pages     = {69--84},
  address   = {Cambridge},
  collection={London Mathematical Society Lecture Note Series},
}

@article{LLM23,
  title   = {On global {ACC} for foliated threefolds},
  journal = {Trans. Amer. Math. Soc. },
  author  = {Liu, J. and Luo, Y. and Meng, F.},
  year    = {2023},
  volume  = {376},
  pages   = {8939--8972},
  number  = {12},
  doi = {10.1090/tran/9053},
}

@misc{DLM23,
  title  = {{ACC} for lc thresholds for algebraically integrable foliations},
  author = {Das, O. and Liu, J. and Mascharak, R.},
  year   = {2023},
  note   = {arXiv preprint \href{https://arxiv.org/abs/2307.07157}{arXiv:2307.07157}}
}

@article{HMX14,
  title   = {{ACC} for log canonical thresholds},
  volume  = {180},
  number  = {2},
  journal = {Ann. of Math. (2) },
  author  = {Hacon, C. and McKernan, J. and Xu, C.},
  year    = {2014},
  pages   = {523--571},
  doi = {10.4007/annals.2014.180.2.3},
}

@misc{Bru02,
  title  = {Foliations on complex projective surfaces},
  author = {Brunella, M.},
  year   = {2002},
  note   = {arXiv preprint \href{https://arxiv.org/abs/math/0212082}{arXiv:math/0212082}}
}

@article{BCHM10,
  author  = {Birkar, C. and Cascini, P. and  Hacon, C. D. and  McKernan, J.},
  journal = {J. Amer. Math. Soc. },
  number  = {2},
  title   = {Existence of minimal models for varieties of log general type},
  volume  = {23},
  year    = {2010},
  pages   = {405--468},
doi = {10.1090/S0894-0347-09-00649-3},
}

@article{HL23,
  author  = {Hacon, C. D. and Liu, J.},
  journal = {Camb. J. Math. },
  number  = {4},
  title   = {Existence of flips for generalized lc pairs},
  volume  = {11},
  year    = {2023},
  pages   = {795--828},
doi = {10.4310/CJM.2023.v11.n4.a1},
}

@misc{CS25bpf,
  title  = {On base point freeness for rank one foliations},
  author = {Cascini, P. and Spicer, C.},
  year   = {2025},
  note   = {arXiv preprint \href{https://arxiv.org/abs/2509.03109}{arXiv:2509.03109}}
}

@article{Art70,
	title = {Algebraization of {Formal} {Moduli}: {II}. {Existence} of {Modifications}},
	volume = {91},
	issn = {0003486X},
	shorttitle = {Algebraization of {Formal} {Moduli}},
	doi = {10.2307/1970602},
	number = {1},
	journal = {Ann. of Math. },
	author = {Artin, M.},
	year = {1970},
	pages = {88},
}

@misc{cascini_minimal_2024,
	title = {Minimal model program for algebraically integrable adjoint foliated structures},
	author = {Cascini, P. and Han, J. and Liu, J. and Meng, F. and Spicer, C. and Svaldi, R. and Xie, L.},
	year = {2024},
	note = {arXiv preprint \href{https://arxiv.org/abs/2408.14258}{arXiv:2408.14258}},
}

@misc{cascini_finite_2025,
	title = {On finite generation and boundedness of adjoint foliated structures},
	author = {Cascini, P. and Han, J. and Liu, J. and Meng, F. and Spicer, C. and Svaldi, R. and Xie, L.},
	year = {2025},
	note = {arXiv preprint \href{https://arxiv.org/abs/2504.10737}{arXiv:2504.10737}},
}

\end{document}